\newtheorem{lemma}{Lemma}[section]
\newtheorem{theorem}[lemma]{Theorem}
\newtheorem{proposition}[lemma]{Proposition}
\newtheorem{corollary}[lemma]{Corollary}
\newtheorem{assumption}[lemma]{Assumption}
\newtheorem{conjecture}[lemma]{Conjecture}
\theoremstyle{definition}
\newtheorem{definition}[lemma]{Definition}
\newtheorem{remark}[lemma]{Remark}
\newcommand*{\N}{\mathbf{N}}
\newcommand*{\R}{\mathbf{R}}
\newcommand*{\CC}{\mathbf{C}}
\newcommand*{\eps}{\varepsilon}
\newcommand*{\cal}[1]{{\mathcal{#1}}}
\newcommand*{\scr}[1]{{\mathscr{#1}}}
\renewcommand*{\frak}[1]{{\mathfrak{#1}}}
\renewcommand*{\rm}[1]{{\mathrm{#1}}}
\newcommand*{\one}{\mathds{1}}
\newcommand*{\sph}{S}
\newcommand*{\e}{e}
\newcommand*{\ii}{i}
\newcommand*{\dd}{\mathop{}\mathopen{}{d}}
\newcommand*{\cont}{C}
\newcommand*{\sch}{{\mathscr S}}
\DeclareMathOperator{\ft}{{\mathscr F}}
\DeclarePairedDelimiterX{\brak}[2]{\langle}{\rangle}{#1, #2}
\DeclarePairedDelimiterX{\inp}[2]{(}{)}{#1, #2}
\DeclarePairedDelimiterX{\comm}[2]{[}{]}{#1, #2}
\DeclarePairedDelimiterX{\poiss}[2]{\{}{\}}{#1, #2}
\DeclarePairedDelimiterX{\liebrak}[2]{[}{]}{#1, #2}
\DeclarePairedDelimiter{\parens}{(}{)}
\DeclarePairedDelimiter\abs{\lvert}{\rvert}
\DeclarePairedDelimiter\norm{\lVert}{\rVert}
\DeclarePairedDelimiter\jap{\langle}{\rangle}
\newcommand*{\comp}{c}
\newcommand*{\loc}{\mathrm{loc}}
\DeclareMathOperator{\dom}{dom}
\DeclareMathOperator{\essinf}{ess inf}
\DeclareMathOperator{\Span}{span}
\DeclareMathOperator{\supp}{supp}
\DeclareMathOperator{\dist}{dist}
\let \Re \relax
\DeclareMathOperator{\Re}{Re}
\let \Im \relax
\DeclareMathOperator{\Im}{Im}
\newcommand*{\Lap}{\Delta} 
\DeclareMathOperator{\quantization}{Op}
\newcommand*{\Opw}[2][]{{\quantization}_{#1}^{\mathrm{\scriptscriptstyle W}}\hspace{-0.075em}\parens*{#2}}
\newcommand*{\Ophw}[1]{{\quantization}_h^{\mathrm{\scriptscriptstyle W}}\hspace{-0.1em}\parens*{#1}}
\DeclareMathOperator{\id}{Id}
\newcommand*{\strongto}[2][]{\xrightarrow[#2]{#1}}
\renewcommand*{\set}[2]{\left\{#1:#2\right\}}
\newlength\oversetwidth
\newlength\underwidth
\newcommand*{\Bop}{\scr{B}}
\newcommand*{\dwop}{\scr{A}} 
\newcommand*{\hilbert}{\frak{H}}
\DeclarePairedDelimiter\avg{\langle}{\rangle}
\newcommand*{\sympf}{\sigma}
\numberwithin{equation}{section}
\DeclareRobustCommand{\SkipTocEntry}[5]{}
\begin{document}

\title[Uniform stability of the damped wave equation with a confining potential]{Uniform stability of the damped wave equation\\with a confining potential in the Euclidean space}
\date{\today}
\author{Antoine Prouff}
\address{Université Paris-Saclay, CNRS, Laboratoire de Mathématiques d'Orsay, 91405, Orsay, France.}
\email{antoine.prouff@universite-paris-saclay.fr}
\keywords{Damped wave equation, stabilization, semiclassical analysis, quasimodes.}
\subjclass{35L05, 81Q20, 93D23.}

\begin{abstract}
We investigate trend to equilibrium for the damped wave equation with a confining potential in the Euclidean space. We provide with necessary and sufficient geometric conditions for the energy to decay exponentially uniformly. The proofs rely on tools from semiclassical analysis together with the construction of quasimodes of the damped wave operator. In addition to the Geometric Control Condition, which is familiar in the context of compact Riemannian manifolds, our work involves a new geometric condition due to the presence of turning points in the underlying classical dynamics which rules the propagation of waves in the high-energy asymptotics.
\end{abstract}

\maketitle
{
\hypersetup{linkcolor=black}
\tableofcontents
}

\section{Introduction}

\subsection{Setting of the problem}

We study the energy decay of damped waves in the Euclidean space of dimension $d \ge 1$, that is to say the trend to equilibrium for solutions to the equation
\begin{equation} \label{eq:eq}
\left\{
\begin{aligned}
\partial_t^2 u + P u + b(x) \partial_t u &= 0 , \qquad x \in \R^d, t > 0 , \\
(u, \partial_t u)_{\vert t = 0} &= U_0 = (u_0, u_1) .
\end{aligned}
\right.
\end{equation}
In~\eqref{eq:eq}, $b \in L^\infty(\R^d)$ is the non-negative \emph{damping coefficient} and $P$ is defined by
\begin{equation} \label{eq:defP}
P = V(x) - \tfrac{1}{2} \Lap ,
\end{equation}
where $V$ is a non-negative locally bounded confining potential on $\R^d$:
\begin{equation} \label{eq:assumV}
V \in L_\loc^\infty(\R^d; \R),
	\qquad
V \ge 0
	\qquad \rm{and} \qquad
V(x) \strongto{x \to \infty} + \infty .
\end{equation}
A further growth condition on $V$ will be stated later. As a consequence of~\eqref{eq:assumV}, the operator $P$, with domain
\begin{equation*}
\dom P
	= \set{u \in L^2(\R^d)}{\left(V(x) - \tfrac{1}{2} \Delta\right) u \in L^2(\R^d)} ,
\end{equation*}
is self-adjoint and positive definite. The damped wave equation~\eqref{eq:eq} is a well-posed evolution problem~\cite[Theorem 4.3]{Pazy:83} on the Hilbert space $\hilbert := \dom P^{1/2} \oplus L^2(\R^d)$, whose inner product is given by
\begin{equation*}
\inp*{(u_1, v_1)}{(u_2, v_2)}_{\frak{H}}
	:= \inp*{P^{1/2} u_1}{P^{1/2} u_2}_{L^2(\R^d)} + \inp*{v_1}{v_2}_{L^2(\R^d)} ,
		\qquad (u_1, v_1), (u_2, v_2) \in \hilbert .
\end{equation*}

As a first observation, we mention that the energy balance corresponding to this equation, obtained formally by multiplying~\eqref{eq:eq} by $\partial_t \bar u$ and performing integration by parts, is simply
\begin{equation} \label{eq:energybalance}
\dfrac{\dd}{\dd t} \cal{E}\left(U_0, t\right)
	= - \int_{\R^d} b \abs*{\partial_t u(t)}^2 \dd x ,
\end{equation}
where the energy $\cal{E}$ is defined by
\begin{equation*}
\cal{E}\left(U_0, t\right)
	:= \dfrac{1}{2} \left( \norm*{P^{1/2} u(t)}_{L^2(\R^d)}^2 + \norm*{\partial_t u(t)}_{L^2(\R^d)}^2 \right) .
\end{equation*}
This a priori estimate indicates that the equation~\eqref{eq:eq} is non-conservative wherever $b$ is positive. In the present work, we investigate necessary and sufficient conditions under which uniform exponential decay for Equation~\eqref{eq:eq} holds, that is to say there exist constants $C > 0, \tau > 0$ such that
\begin{equation} \label{eq:decayenergy}
\forall U_0 \in \hilbert, \quad
	\cal{E}\left(U_0, t\right)
		\le C \e^{-t/\tau} \cal{E}\left(U_0, 0\right) ,
			\qquad \forall t \ge 0 .
\end{equation}

The study of decay rates for the damped wave equation dates back to the 70s with the celebrated works of Rauch and Taylor, and later of Bardos, Lebeau and Rauch~\cite{RT:74,BLR:88,BLR:92}, in the setting of compact Riemannian manifolds. They prove a sharp sufficient condition for having uniform exponential decay of the energy of solutions to the damped wave equation. The latter can be expressed in broad terms as follows: any geodesic enters the so-called damped set, where the damping is effective (this is merely $\{b > 0\}$ when $b$ is continuous). This is called the \emph{Geometric Control Condition} (GCC). The idea behind this result is that the energy of high frequency solutions to wave equations is largely carried by the rays of geometric optics.

In comparison, few is understood in unbounded geometries, partly because it is unclear how to handle properly the presence of infinity in space. The paper of Burq and Joly~\cite{BJ:16} provides a sufficient condition for having uniform decay of the energy for the Klein--Gordon equation in $\R^d$, that is to say the same equation as~\eqref{eq:eq} with a bounded potential and a Laplacian with possibly varying coefficients. For the flat Laplacian case with constant potential ($P = - \Delta + 1$), it reads as follows: uniform exponential decay~\eqref{eq:decayenergy} holds if there exists $L > 0$ such that the average of $b$ on any segment of length $L$ is uniformly bounded from below by a constant $c > 0$. This uniform version of the GCC appears to be a fairly natural generalization of the GCC to unbounded domains. Note that their proof requires $b$ to be uniformly continuous on $\R^d$, which in particular prevents $b$ from being more and more oscillatory near infinity. Their result has been recently generalized to asymptotically cylindrical and conic manifolds by Wang in~\cite{W:20}. For other works on the damped wave equation in the Euclidean space, we refer to the papers by Bouclet--Royer~\cite{BR:14} and Royer~\cite{Royer:18energyspace} for a study of the local energy decay with a short-range damping coefficient. See also investigations of the effect of periodic damping coefficients by Wunsch~\cite{Wunsch:17}, Joly--Royer~\cite{JR:18}, Royer~\cite{Royer:18highlyoscillating}.

In the sequel, we may rewrite~\eqref{eq:eq} in the form of an order-one evolution PDE:
\begin{equation} \label{eq:eq1}
\partial_t U
	= \dwop_b U , \qquad t > 0 ,
\end{equation}
where we wrote $U = (u, \partial_t u)$, and the infinitesimal generator is the so-called \emph{damped wave operator}:
\begin{equation*} \label{eq:IG}
\dwop_b =
\begin{pmatrix}
0 & 1 \\
- P & - b
\end{pmatrix} ,
\end{equation*}
acting on $\hilbert$, with domain $\dom \dwop_b = \dom P \oplus \dom P^{1/2}$.
The energy of the solution of~\eqref{eq:eq1} with initial datum $U_0$ corresponds to:
\begin{equation*} \label{eq:energy}
\cal{E}\left(U_0, t\right)
	= \dfrac{1}{2} \norm*{\e^{t \dwop_b} U_0}_\hilbert^2 ,
		\qquad t \in \R_+ .
\end{equation*}

The exponential decay, defined in~\eqref{eq:decayenergy}, can be expressed in a simple way by saying that the norm of the semigroup decays exponentially over time.

\begin{definition}[Uniform stabilization] \label{def:unifstab}
The equation~\eqref{eq:eq} is said to be uniformly stable if there exist $C > 0, \tau > 0$ such that
\begin{equation*}
\norm*{\e^{t \dwop_b}}_{\Bop(\hilbert)} \le C \e^{-t/\tau} , \qquad \forall t \ge 0 .
\end{equation*}
\end{definition}

Note that by the semigroup property, uniform stability is equivalent to the fact that $\norm*{\e^{t \dwop_b}}_{\Bop(\hilbert)} < 1$ for some $t > 0$ (see~\cite[Chapter V, Proposition 1.7]{EN:book}).

A motivation to study~\eqref{eq:eq} in the setting~\eqref{eq:defP} and~\eqref{eq:assumV} is that waves should be essentially trapped in a bounded region of $\R^d$ when $V$ is confining. Then we can expect the analysis to share similarities with that of damped waves in compact domains. Actually, the situation will turn out to be rather different, due to the fact that the underlying classical dynamics possesses turning points, as we shall explain further below.

\subsection{Main results}

We present two types of results here. We refer to the first one as ``a priori conditions of stabilization" since they do not rely on any particular feature of the potential (except for~\eqref{eq:assumV}). These results are based on geometrical optics constructions and semiclassical defect measures for the flat Laplacian in $\R^d$.  Thus Propositions~\ref{prop:aprioriNC} and~\ref{prop:aprioriSC} below may be seen as a variation of classical results in control theory.

Throughout this paper, we denote by $\abs{A}$ the {$d$-dimensional} Lebesgue measure of the measurable set $A \subset \R^d$. Given a finite measure space $(X, \cal{B}, \mu)$, we also use the notation
\begin{equation*}
\fint_X f \dd \mu
	:= \dfrac{1}{\mu(X)} \int_X f \dd \mu .
\end{equation*}
We also write $S \R^d = \R^d \times \sph^{d-1}$.

\begin{proposition}[A priori necessary condition for uniform stability] \label{prop:aprioriNC}
Assume $V$ is subject to~\eqref{eq:assumV} and $b \in L^\infty(\R^d)$ is a non-negative damping coefficient. Then if~\eqref{eq:eq} is uniformly stable, $b$ satisfies the Uniform Geometric Control Condition
\begin{equation} \label{eq:GCC}
\exists T > 0, \exists c > 0 : \forall (x_0, \nu_0) \in S \R^d, \forall r > 0, \qquad \fint_{-T}^T (b \ast \kappa_r)(x_0 + t \nu_0) \dd t \ge c ,
\tag{UGCC}
\end{equation}
where
\begin{equation} \label{eq:defkappa}
\kappa := \dfrac{1}{\abs{B_1(0)}} \one_{B_1(0)} ,
	\qquad
\kappa_r := r^{-d} \kappa\left(\dfrac{\bullet}{r}\right), \;\, r > 0 .
\end{equation}
\end{proposition}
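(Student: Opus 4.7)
I would prove the proposition by contrapositive: assuming (UGCC) fails, I build a sequence of high-frequency wave packets whose energy is dissipated only negligibly over a long time window, contradicting uniform stability. Negating (UGCC) with $T = n$ and $c = n^{-2}$ yields $(y_n, \nu_n) \in S\R^d$ and $r_n > 0$ such that
$$ I_n := \int_0^{2 n} (b * \kappa_{r_n})(x_n + t \nu_n) \, \dd t < \frac{2}{n} \strongto{n \to \infty} 0 , \qquad x_n := y_n - n \nu_n . $$

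For each $n$ I would take a coherent state concentrated at $x_n$ at scale $r_n$, oscillating at frequency $\lambda_n \to \infty$ in the direction $\nu_n$,
$$ u_n^0(x) := r_n^{-d/2} \chi\!\left(\tfrac{x - x_n}{r_n}\right) \e^{i \lambda_n \nu_n \cdot x}, \qquad \chi \in C_c^\infty(B_1(0)), \;\, \textstyle\int_{\R^d} |\chi|^2 = 1, $$
coupled with $v_n^0 := -i \sqrt{P} u_n^0$ to select the forward-propagating mode of the factorization $\partial_t^2 + P \approx (\partial_t + i\sqrt{P})(\partial_t - i\sqrt{P})$. A direct computation yields $\cal{E}(U_n^0, 0) \sim \lambda_n^2$ provided $\lambda_n$ is taken larger than $r_n^{-1}$ and than $\sup_{B_{r_n}(x_n)} V^{1/2}$.

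The key semiclassical input is that, when $\lambda_n$ is sufficiently large (it suffices to further require $\lambda_n \gg n^2/r_n^2$ to control the transverse dispersion of the packet on $[0, 2n]$), the solution $u_n(t)$ of \eqref{eq:eq} with initial data $U_n^0$ is well approximated on $[0, 2n]$ by a translated wave packet along the free ray, with characteristic velocity $\nu_n/\sqrt{2}$. Performing a linear change of variable in the time integral then gives
$$ \int_0^{2n} \!\! \int_{\R^d} b \, |\partial_t u_n|^2 \, \dd x \, \dd t \;\lesssim\; \lambda_n^2 \int_0^{\sqrt{2}\, n} (b * |\chi_{r_n}|^2)(x_n + s \nu_n) \, \dd s . $$
Using the pointwise bound $|\chi_{r_n}|^2 \le \norm{\chi}_\infty^2 \, \abs{B_1(0)} \, \kappa_{r_n}$ and the inclusion $[0, \sqrt{2}\, n] \subset [0, 2n]$, the right-hand side is $\lesssim \lambda_n^2 I_n = o(\lambda_n^2) = o\bigl(\cal{E}(U_n^0, 0)\bigr)$. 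Plugging this into the energy identity \eqref{eq:energybalance} gives $\cal{E}(U_n^0, 2n) \ge (1 - o(1)) \cal{E}(U_n^0, 0)$, in flat contradiction with the bound $\cal{E}(U_n^0, 2n) \le C \e^{-2n/\tau} \cal{E}(U_n^0, 0)$ supplied by uniform stability for $n$ large.

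The main obstacle is justifying the long-time semiclassical propagation estimate for $u_n$ on $[0, 2n]$: one must show the wave packet stays localized along the free ray despite (i) its natural transverse dispersion caused by the $\lambda_n^{-1/2}$-spread of directions in the packet, and (ii) the deflection of the classical Hamiltonian flow of $\tfrac{1}{2} |\xi|^2 + V(x)$ away from straight lines. Issue (i) is handled by the polynomial lower bound $\lambda_n \gg n^2/r_n^2$; for (ii), the key point is that at energy $\sim \lambda_n^2$, trajectories with initial momentum $\sim \lambda_n \nu_n$ deviate from the free ray by only $O(n/\lambda_n)$ on $[0, 2n]$, negligible against $r_n$ under the same scaling. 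Together these estimates should support a standard WKB or Gaussian-beam construction, analogous to those used in the compact Riemannian setting.
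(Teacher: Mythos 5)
There is a genuine gap, and it sits exactly at the step you label ``the key semiclassical input.'' Your argument requires approximating the solution of the \emph{damped} equation~\eqref{eq:eq} by a freely translated wave packet on a time window $[0,2n]$ whose length must grow (the negation of~\eqref{eq:GCC} forces $T=n\to\infty$), and under the hypotheses of the proposition this approximation is neither standard nor, as sketched, provable. First, the damping term $b\,\partial_t u$ is of size comparable to $\lambda_n$ times the packet's amplitude, so a Duhamel comparison with the undamped or free evolution accumulates an error of order $n\,\|b\|_{L^\infty}$ over the window --- it cannot be absorbed by taking $\lambda_n$ large, and bounding it instead by the dissipation along the ray is circular, since keeping the packet localized in the tube is what you need in order to know the dissipation is small. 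Moreover $b$ is merely $L^\infty$, so WKB or Gaussian-beam machinery (which needs smooth coefficients) does not apply to the damped operator. Second, your treatment of the potential invokes deflection estimates for the Hamiltonian flow of $\tfrac12|\xi|^2+V$, but in Proposition~\ref{prop:aprioriNC} the potential is only assumed $L^\infty_{\mathrm{loc}}$, non-negative and confining: $\nabla V$ need not exist, there is no classical flow, and no bound of the form $O(n/\lambda_n)$ on the deviation is available. (The potential \emph{can} be handled using only $\sup V$ on the relevant region and a huge $\lambda_n$, but that does not rescue the damping issue.)

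This is precisely why the paper does not propagate anything in time. It first reduces failure of uniform stability, via Gearhart--Pr\"uss and the resolvent comparison with $P_b(\ii\lambda)$ (Proposition~\ref{prop:quasimodes}), to the existence of a \emph{stationary} family: normalized $o(\lambda)$-quasimodes of $P$ with $\inp*{u_\lambda}{b\,u_\lambda}_{L^2}\to0$. Then Proposition~\ref{prop:constructionbeam} builds, for each violating ray, a single tube-shaped wave packet of length $n$ and transverse width $\sim r_n$, oscillating at frequency $\lambda_n$ chosen so large that both the transverse kinetic cost and $\|V\|_{L^\infty}$ on the tube fit into the $o(\lambda_n)$ remainder; the smallness of $\inp*{u_n}{b\,u_n}$ then follows from the assumed smallness of the averaged, mollified $b$ along the segment, with the mollification scale matched to the tube width. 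No regularity of $b$ or $V$ and no long-time dynamics enter. To make your dynamical argument rigorous you would essentially have to prove a long-time propagation theorem for rough coefficients that is at least as hard as the statement itself; the honest fix is to route the contradiction through the resolvent characterization and a stationary tube quasimode, as the paper does.
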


\begin{remark} \label{rmk:GCCnear0}
We prove in Lemma~\ref{lem:GCCnear0} of Appendix~\ref{app:constant} that the condition~\eqref{eq:GCC} is equivalent to the following statement:
\begin{equation} \label{eq:GCCnear0}
\exists T > 0, \exists c > 0 : \forall (x_0, \nu_0) \in S \R^d, \qquad \liminf_{r \to 0} \fint_{-T}^T (b \ast \kappa_r)(x_0 + t \nu_0) \dd t \ge c ,
\end{equation}
where $\kappa_r$ is defined in~\eqref{eq:defkappa}.
\end{remark}

\begin{proposition}[A priori sufficient condition for uniform stability] \label{prop:aprioriSC}
Assume $V$ is subject to~\eqref{eq:assumV} and $b \in L^\infty(\R^d)$ is a non-negative damping coefficient. If $b$ is essentially bounded from below by a positive constant outside some compact set, that is
\begin{equation} \label{eq:boundedoutsidecompactset}
\exists R > 0, \exists c > 0 : \qquad b(x) \ge c \quad \rm{for \; a.e.} \; x \in \R^d \setminus B_R(0) ,
\end{equation}
then~\eqref{eq:eq} is uniformly stable.
\end{proposition}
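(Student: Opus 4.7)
The plan is to apply the Gearhart--Huang--Prüss theorem. Since the energy identity~\eqref{eq:energybalance} makes $\dwop_b$ dissipative, it generates a contraction semigroup on $\hilbert$, and uniform stability as in Definition~\ref{def:unifstab} is equivalent to the conjunction of (i) $i\R \subset \rho(\dwop_b)$ and (ii) $\sup_{\lambda \in \R} \norm*{(\dwop_b - i\lambda)^{-1}}_{\Bop(\hilbert)} < \infty$. Both would be proved by contradiction, using quasi-eigenvector sequences.

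For (i), an eigenvector $(u,v) \in \dom \dwop_b$ at $i\lambda$ satisfies $v = i\lambda u$ and $(P - \lambda^2) u = -i \lambda b u$; pairing with $u$ and extracting the imaginary part yields $\lambda \int b|u|^2 \dd x = 0$, so $u$ vanishes where $b > 0$, and in particular on $\R^d \setminus B_R(0)$ by~\eqref{eq:boundedoutsidecompactset}. A weak unique continuation principle for the Schrödinger equation $(-\tfrac{1}{2} \Delta + V - \lambda^2) u = 0$ with $V \in L^\infty_{\mathrm{loc}}$ (via Carleman estimates \`a la Jerison--Kenig) then forces $u \equiv 0$, hence $U = 0$. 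The case $\lambda = 0$ is immediate from the positive definiteness of $P$.

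For (ii), assume by contradiction there exist $\lambda_n \in \R$ and $U_n = (u_n, v_n) \in \dom \dwop_b$ with $\norm*{U_n}_\hilbert = 1$ and $F_n = (f_n, g_n) := (\dwop_b - i \lambda_n) U_n \to 0$ in $\hilbert$. The dissipation identity $\int b |v_n|^2 \dd x = -\mathrm{Re} \inp*{F_n}{U_n}_\hilbert \to 0$, combined with~\eqref{eq:boundedoutsidecompactset}, gives $v_n \to 0$ in $L^2(\R^d \setminus B_R)$. If $(\lambda_n)$ stays bounded, the confining assumption~\eqref{eq:assumV} makes the embedding $\dom P^{1/2} \hookrightarrow L^2(\R^d)$ compact (Rellich plus a tightness argument using $V \to \infty$), so up to extraction $u_n \to u_\infty$ in $L^2$. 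Passing to the limit in the resolvent equation identifies $(u_\infty, i\lambda_0 u_\infty)$ as an eigenvector of $\dwop_b$ at $i\lambda_0$, whence $U_\infty = 0$ by (i). Using $v_n = i\lambda_n u_n + f_n$ and the identity $\norm*{P^{1/2} u_n}_{L^2}^2 = -\mathrm{Re} \inp*{(i\lambda_n + b) v_n + g_n}{u_n}_{L^2}$, one then obtains $\norm*{U_n}_\hilbert \to 0$, a contradiction.

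The main obstacle is the case $|\lambda_n| \to \infty$, which is genuinely semiclassical. Setting (wlog $\lambda_n > 0$) $h_n = 1/\lambda_n$ and $w_n = \lambda_n u_n = -i v_n + i f_n$, which is bounded in $L^2(\R^d)$, and eliminating $v_n$ from the resolvent system, I get
\begin{equation*}
\bigl( -\tfrac{h_n^2}{2} \Delta + h_n^2 V - 1 + i h_n b \bigr) w_n = o(1) \quad \text{in } L^2(\R^d) .
\end{equation*}
After localizing with a smooth cutoff supported in a bounded neighborhood of $B_R$ (to turn $h_n^2 V$ into a bounded semiclassical symbol), the principal symbol is $p(x, \xi) = \tfrac{1}{2}|\xi|^2 - 1$, whose Hamiltonian flow consists of unit-speed straight lines on the characteristic set $\{p = 0\}$. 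Extracting a semiclassical defect measure $\mu$ for $(w_n)$: it is carried by $\{p = 0\}$; the dissipation $\int b |w_n|^2 \dd x \to 0$ forces $b \mu = 0$, so $\mu$ vanishes on $T^*(\R^d \setminus B_R)$; and standard propagation shows $\mu$ is invariant along the Hamiltonian flow. Since every characteristic curve in $\R^d$ exits $B_R$ in finite time, $\mu \equiv 0$, so $w_n \to 0$ in $L^2_{\mathrm{loc}}(\R^d)$. Combined with the exterior estimate this yields $v_n \to 0$ in $L^2(\R^d)$, and the contradiction closes as in the bounded case. The subtlety worth highlighting is that the unbounded growth of $V$ would obstruct a global semiclassical calculus; this is circumvented because the pointwise damping already controls $v_n$ outside $B_R$, so the semiclassical picture only needs to be correct locally, where $V \in L^\infty$.
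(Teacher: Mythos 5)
Your overall scheme (Gearhart--Pr\"uss plus a contradiction argument with semiclassical defect measures) is close in spirit to the paper's, and the low-frequency part (unique continuation from the open set $\R^d \setminus \overline{B_R}$ where $b\ge c$ forces $u\equiv 0$, plus compactness of $\dom P^{1/2}\hookrightarrow L^2$ for confining $V$) is essentially fine. The genuine gap is in the high-frequency regime, exactly at the sentence ``standard propagation shows $\mu$ is invariant along the Hamiltonian flow''. Eliminating $v_n$ from $(\dwop_b-\ii\lambda_n)U_n=F_n$ with only $\norm*{F_n}_\hilbert\to 0$ gives, for the $L^2$-bounded family $w_n=\lambda_n u_n=-\ii v_n+\ii f_n$, the equation $\bigl(-\tfrac{h_n^2}{2}\Delta+h_n^2V-1+\ii h_n b\bigr)w_n=-\ii f_n-h_n g_n-h_n b f_n$, whose dominant error $-\ii f_n$ is only $o_{L^2}(1)$, with no rate relative to $h_n=1/\lambda_n$. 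Precision $o_{L^2}(1)$ does place the defect measure on the characteristic set $\{\tfrac12|\xi|^2=1\}$, but it does \emph{not} yield flow invariance: in the commutator argument one must control terms of the form $h_n^{-1}\inp*{\Opw[h_n]{a}\,(\mathrm{error})}{w_n}_{L^2}$, which requires the error to be $o_{L^2}(h_n)$ --- this is precisely the dichotomy recalled in Proposition~\ref{prop:suppflowinvSDM}. Without invariance, $\mu$ may perfectly well be a nonzero measure carried by $\overline{B_R}\times\{|\xi|=\sqrt 2\}$, so $w_n\not\to 0$ in $L^2_\loc$ and your contradiction does not close.

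The missing ingredient is the gain of a factor $|\lambda|$ when passing from the generator $\dwop_b$ to the quadratic pencil $P_b(\ii\lambda)=P-\lambda^2+\ii\lambda b$: failure of the uniform resolvent bound for $\dwop_b$ on $\ii\R$ is equivalent to $|\lambda_n|\,\norm*{P_b(\ii\lambda_n)^{-1}}_{\Bop(L^2)}\to\infty$ along some $\lambda_n\to\infty$ (this is \cite[Lemma 4.6]{AL:14}, packaged in Proposition~\ref{prop:quasimodes}), and it is exactly this extra factor that produces $L^2$-normalized quasimodes with $(P-\lambda_n^2)u_n=o_{L^2}(\lambda_n)$, i.e.\ $o(h)$-quasimodes in the semiclassical scaling, for which flow invariance and then Lemma~\ref{lem:mu=0Lap} and Proposition~\ref{prop:escape} apply. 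Your elimination, as written, only yields $o(\lambda_n^2)$ precision for the normalized family, so the proposal silently assumes this nontrivial step. A secondary, non-fatal difference: rather than a fixed cutoff near $B_R$, the paper uses a slowly growing cutoff $\psi_{R_h}$ with $\norm*{\psi_{R_h}V}_{L^\infty}\le h^{-1/2}$ to turn the quasimode into a global $o(h)$-quasimode of the semiclassical Laplacian, and then invokes escape of mass to infinity (Lemma~\ref{lem:mu=0Lap}) instead of your ``every ray exits $B_R$'' argument; that is a difference of packaging, not the source of the gap.
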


The version of the Geometric Control Condition appearing in Proposition~\ref{prop:aprioriNC} generalizes the one known for continuous damping coefficients (see~\cite{RT:74,BLR:92}) to the case where $b$ is merely $L^\infty$. It has already been formulated (in an equivalent way) by Burq and Gérard in~\cite{BG:20}, and has been shown to be necessary for uniform stability to occur for the damped wave equation on compact Riemannian manifolds. Notice that for $b \in \cont^0(\R^d)$, the condition~\eqref{eq:GCC} is equivalent to
\begin{equation*}
\exists T > 0, \exists c > 0 : \forall (x_0, \nu_0) \in S \R^d, \qquad \fint_{-T}^T b(x_0 + t \nu_0) \dd t \ge c .
\end{equation*}
It is also the natural way of generalizing GCC to non-compact geometries---see Burq and Joly~\cite{BJ:16}. As for the sufficient condition of Proposition~\ref{prop:aprioriSC}, it already appeared in~\cite{Zuazua:91} in the context of semilinear damped wave equations in $\R^d$.
Although we assume in Propositions~\ref{prop:aprioriNC} and~\ref{prop:aprioriSC} that $V$ is a confining potential, the conditions~\eqref{eq:GCC} and~\eqref{eq:boundedoutsidecompactset} are unrelated to this confinement property, so that Propositions~\ref{prop:aprioriNC} and~\ref{prop:aprioriSC} are actually true also for non-negative $L_\loc^\infty(\R^d)$ potentials.\footnote{One may assume that this potential is such that $P = V(x) - \tfrac{1}{2} \Delta$ is positive definite.} The assumption that $V$ is confining is convenient when reducing the problem of uniform stability to a resolvent estimate (Proposition~\ref{prop:quasimodes}).

Our second type of results aims at understanding the effect of the presence of a \emph{confining} potential. A new geometric control condition appears in this context, under additional assumptions on the potential.

\begin{assumption}[Growth condition on the potential] \label{assum:assumptionspotentialdampedeq}
The potential $V$ is of class $\cont^2$ and strictly sub-quartic, that is to say
\begin{equation*}
V \in \cont^2(\R^d)
	\qquad \rm{and} \qquad
\nabla V(x) = o\left(V(x)^{3/4}\right)
	\quad \rm{as} \quad x \to \infty .
\end{equation*}
\end{assumption}

\begin{remark}
One can check that the strict sub-quarticity condition implies the (weaker) property $V(x) \ll \abs*{x}^4$ as $x \to \infty$. If fact, every $C^2$ potential of the form $\abs*{x}^s$ with $0 < s < 4$ outside a compact set satisfies the strict sub-quarticity assumption.
\end{remark}

Our main result is the following necessary condition.

\begin{theorem}[Necessary condition for uniform stability] \label{thm:necessarycondition}
Assume the potential $V$ is subject to~\eqref{eq:assumV} and Assumption~\ref{assum:assumptionspotentialdampedeq}, and the damping coefficient $b \in L^\infty(\R^d)$ is such that the damped wave equation~\eqref{eq:eq} is uniformly stable. Then $b$ satisfies the Uniform Geometric Control Condition~\eqref{eq:GCC} and the Turning Point Condition:
\begin{equation} \label{eq:TPC}
\tag{TPC}
\exists R > 0 : \qquad \liminf_{x \to \infty} \fint_{B_{R/V(x)^{1/4}}(x)} b(y) \dd y > 0 .
\end{equation}
\end{theorem}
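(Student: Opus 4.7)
The (UGCC) part is immediate from Proposition~\ref{prop:aprioriNC}, which requires only the basic assumption~\eqref{eq:assumV}. So the task is to establish (TPC), which I would do by contrapositive. Suppose (TPC) fails: for every $R > 0$, $\liminf_{x\to\infty}\fint_{B_{RV(x)^{-1/4}}(x)} b\,\dd y = 0$. The plan is to build a sequence of quasimodes of the damped wave operator $\dwop_b$ violating the uniform resolvent bound on $i\R$ that is equivalent to uniform stability (cf.~Proposition~\ref{prop:quasimodes}).

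Write $\varepsilon(x) := \abs{\nabla V(x)}/V(x)^{3/4}$, so strict sub-quarticity gives $\varepsilon(x)\to 0$ as $x\to\infty$. A diagonal extraction (take $R_n = n$; choose $\abs{x_n}$ so large that $\sup_{\abs{y}\ge\abs{x_n}-1}\varepsilon(y) < 1/n^2$; then use failure of (TPC) at $R = n$ to further require $\fint_{B_{n V(x_n)^{-1/4}}(x_n)} b\,\dd y < 1/n$) produces $(x_n)\subset\R^d$ and $R_n\to+\infty$ such that $\abs{x_n}\to\infty$, $r_n := R_n V(x_n)^{-1/4}\to 0$, $R_n\,\sup_{B_{r_n}(x_n)}\varepsilon \to 0$, and $\fint_{B_{r_n}(x_n)} b\,\dd y\to 0$. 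Picking $\chi\in\cont^\infty_c(B_1(0))$ with $\|\chi\|_{L^2}=1$, set $\lambda_n := V(x_n)$ and
\[
u_n(y) := r_n^{-d/2}\chi\bigl((y-x_n)/r_n\bigr),
\]
so that $\|u_n\|_{L^2}=1$ and $\operatorname{supp} u_n \subset B_{r_n}(x_n)$.

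Two estimates then lie at the heart of the argument. \emph{(i) A $P$-quasimode bound} $\|(P-\lambda_n)u_n\|_{L^2} = o(\sqrt{\lambda_n})$: splitting $(P-\lambda_n)u_n = -\tfrac12\Delta u_n + (V-V(x_n))u_n$, the kinetic part has norm $\sim r_n^{-2} = V(x_n)^{1/2}/R_n^2 = o(V(x_n)^{1/2})$ since $R_n\to\infty$; and the potential part is controlled on $B_{r_n}(x_n)$ by the mean value theorem and sub-quarticity, giving $r_n \sup_{B_{r_n}(x_n)}\abs{\nabla V} \lesssim R_n\varepsilon(x_n)\,V(x_n)^{1/2} = o(V(x_n)^{1/2})$ (the smallness $r_n \to 0$ ensures $V \sim V(x_n)$ throughout $B_{r_n}(x_n)$, so sub-quarticity transfers uniformly from $x_n$ to the whole ball). \emph{(ii) Smallness of damping on $u_n$}: since $\|u_n\|_\infty^2 \lesssim r_n^{-d}$ and $\abs{B_{r_n}}\sim r_n^d$,
\[
\|b u_n\|_{L^2}^2 \le \|b\|_\infty \int_{B_{r_n}(x_n)} b\,\abs{u_n}^2\,\dd y \lesssim \|b\|_\infty\fint_{B_{r_n}(x_n)} b\,\dd y\to 0.
\]

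Assembling the quasimode for $\dwop_b$: setting $U_n:=(u_n, i\sqrt{\lambda_n}\,u_n)\in\hilbert$, estimate (i) yields $\|P^{1/2}u_n\|_{L^2}^2 = \lambda_n + o(\sqrt{\lambda_n})$, hence $\|U_n\|_\hilbert^2 \sim 2\lambda_n$. A direct computation gives
\[
(i\sqrt{\lambda_n}-\dwop_b)U_n = \bigl(0,\,(P-\lambda_n)u_n + i\sqrt{\lambda_n}\,b\,u_n\bigr),
\]
whose $\hilbert$-norm is $\le \|(P-\lambda_n)u_n\|_{L^2} + \sqrt{\lambda_n}\,\|b u_n\|_{L^2} = o(\sqrt{\lambda_n})$. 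Dividing by $\|U_n\|_\hilbert$ produces normalized vectors along which $(i\tau-\dwop_b)^{-1}$ blows up as $\tau=\sqrt{\lambda_n}\to\infty$, contradicting uniform stability via Proposition~\ref{prop:quasimodes}. The main obstacle is the delicate scale balance in estimate (i): the kinetic error $V^{1/2}/R_n^2$ forces $R_n\to\infty$, whereas the potential-oscillation error $R_n\varepsilon(x_n)V^{1/2}$ forces $R_n\varepsilon(x_n)\to 0$; strict sub-quarticity is precisely the condition that allows both simultaneously, which also explains why $V^{-1/4}$ is the critical scale appearing in (TPC).
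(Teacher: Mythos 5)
Your proposal is correct and follows essentially the same route as the paper: bump quasimodes supported in balls of radius $r_n=R_nV(x_n)^{-1/4}$ with $R_n\to\infty$ slowly, the same balance between the kinetic error $1/R_n^2$ and the potential-oscillation error $R_n\varepsilon$ coming from strict sub-quarticity, and the reduction of uniform stability to asymptotically undamped quasimodes (your explicit assembly of $U_n=(u_n,\,i\sqrt{\lambda_n}\,u_n)$ and the resolvent blow-up is just a hands-on rederivation of what Proposition~\ref{prop:quasimodes} already gives, so you could cite it directly). Two small points to tighten: the uniform comparability $V\sim V(x_n)$ on $B_{r_n}(x_n)$ does not follow from $r_n\to 0$ alone but from the gradient bound $\abs{\nabla V}\lesssim (1+V)^{3/4}$ (this is exactly what the paper's Lemma~\ref{lem:lemV} supplies), and the diagonal extraction should also impose, say, $V(x_n)\ge n^4$ so that $r_n\le 1$ and $r_n\to 0$ actually hold.
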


\begin{figure}
\centering
\hspace*{-1cm}
\begin{subfigure}{0.40\textwidth}
         \centering
         \includegraphics[scale=0.18]{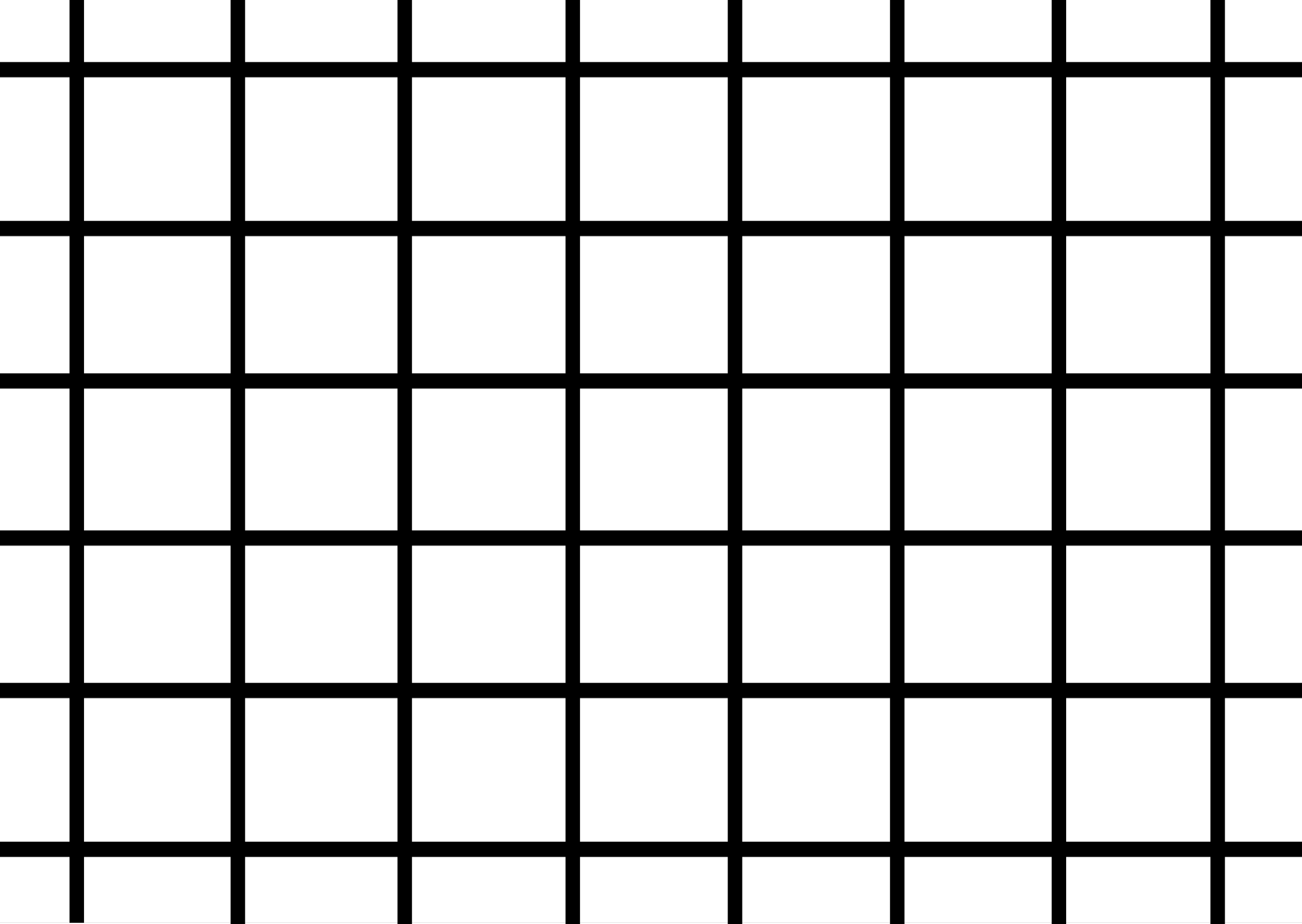}
         \caption{Damping coefficient satisfying~\eqref{eq:GCC}.}
         \label{subfig:one}
     \end{subfigure}
\hspace*{1cm}
     \begin{subfigure}{0.40\textwidth}
         \centering
         \includegraphics[scale=0.18]{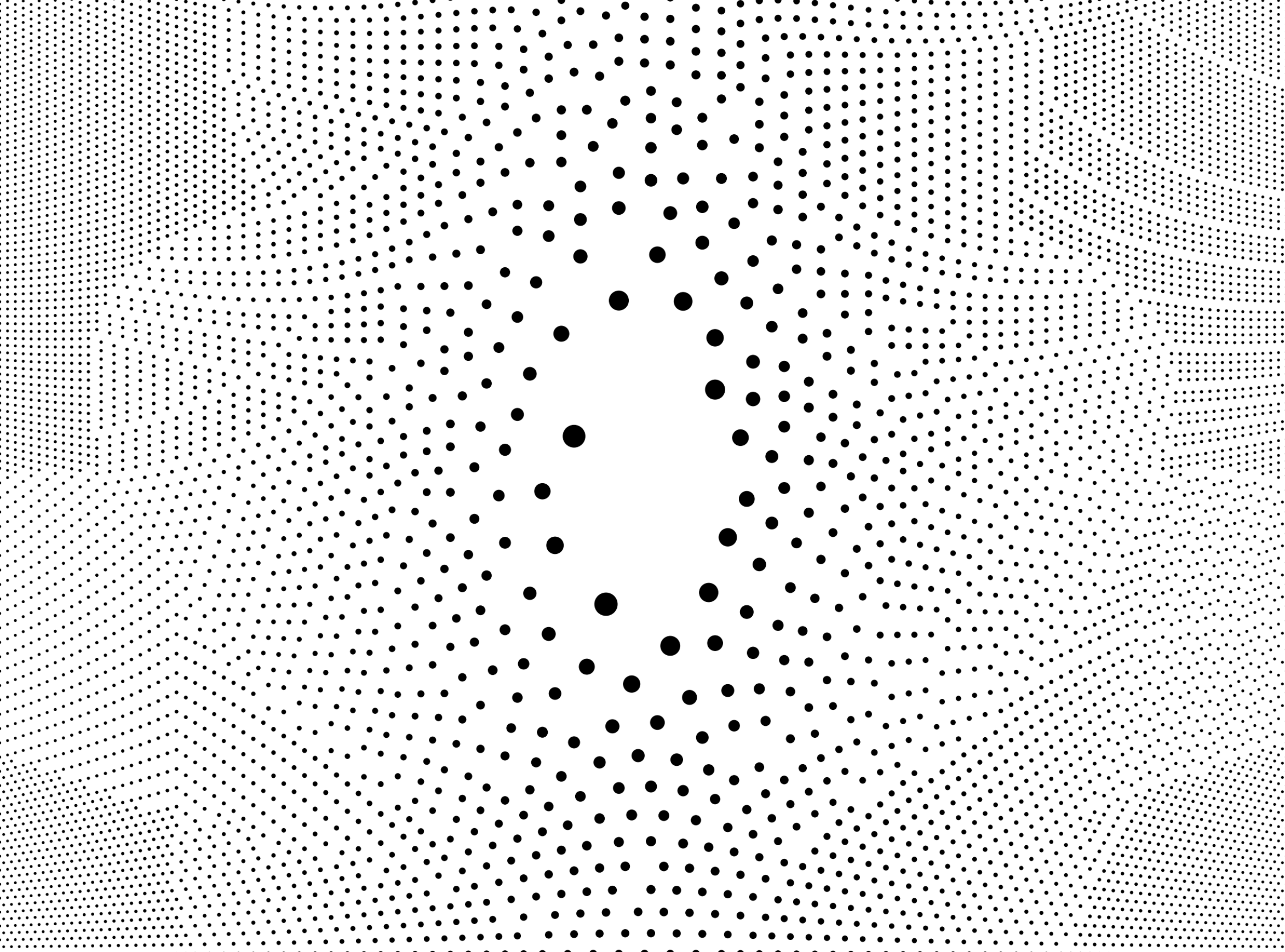}
         \caption{Damping coefficient satisfying~\eqref{eq:TPC}.}
         \label{subfig:deco}
     \end{subfigure}
        \caption{Black and white indicate the points where $b$ is equal to $1$ or $0$ respectively.}
        \label{fig:amortissement}
\end{figure}

Restricting the scope of our investigation to a class of nice damping coefficients, we prove the following characterization of uniform stabilization, as a consequence of the necessary condition in Theorem~\ref{thm:necessarycondition} and the a priori sufficient condition obtained in Proposition~\ref{prop:aprioriSC}.

\begin{corollary}[Characterization of uniform stability for uniformly continuous damping coefficients] \label{cor:characterizationregulardamping}
Let $V$ be a potential on $\R^d$ satisfying~\eqref{eq:assumV} and Assumption~\ref{assum:assumptionspotentialdampedeq}, and suppose $b$ is a \emph{uniformly continuous} non-negative damping coefficient. Then equation~\eqref{eq:eq} is uniformly stable if and only if $b$ is bounded from below outside a compact set, that is to say
\begin{equation*}
\liminf_{x \to \infty} b(x) > 0 .
\end{equation*}
\end{corollary}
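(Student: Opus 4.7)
The corollary is obtained by combining the two directions of the statement, each of which reduces to one of the previously established results applied in a straightforward way, with uniform continuity serving as the bridge.

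\textbf{Easy direction.} Assume $\liminf_{x\to\infty} b(x) > 0$. Then there exist $c > 0$ and a compact set $K \subset \R^d$ such that $b(x) \ge c$ for every $x \in \R^d \setminus K$. Choosing $R > 0$ large enough that $K \subset B_R(0)$ gives condition~\eqref{eq:boundedoutsidecompactset}, and Proposition~\ref{prop:aprioriSC} yields uniform stability.

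\textbf{Necessary direction.} Suppose equation~\eqref{eq:eq} is uniformly stable. By Theorem~\ref{thm:necessarycondition}, the damping coefficient $b$ satisfies~\eqref{eq:TPC}: there exist $R > 0$ and $c_0 > 0$ such that
\begin{equation*}
\liminf_{x \to \infty} \fint_{B_{R/V(x)^{1/4}}(x)} b(y) \dd y \ge c_0 > 0.
\end{equation*}
The plan is to transfer this averaged lower bound to a pointwise lower bound on $b$ by exploiting the fact that the averaging radius $R/V(x)^{1/4}$ shrinks to $0$ as $x \to \infty$ (since $V$ is confining), together with uniform continuity of $b$. Fix $\varepsilon > 0$. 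Uniform continuity gives $\delta > 0$ such that $\abs{b(y) - b(x)} < \varepsilon$ whenever $\abs{y - x} < \delta$. Because $V(x) \to +\infty$, there exists $M > 0$ such that $R/V(x)^{1/4} < \delta$ for all $\abs{x} \ge M$. Then for such $x$,
\begin{equation*}
\fint_{B_{R/V(x)^{1/4}}(x)} b(y) \dd y \le b(x) + \varepsilon,
\end{equation*}
so $b(x) \ge \fint_{B_{R/V(x)^{1/4}}(x)} b(y) \dd y - \varepsilon$. Taking $\liminf$ as $x \to \infty$ and using~\eqref{eq:TPC} yields $\liminf_{x \to \infty} b(x) \ge c_0 - \varepsilon$. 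Since $\varepsilon > 0$ was arbitrary, $\liminf_{x \to \infty} b(x) \ge c_0 > 0$, as desired.

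\textbf{Main obstacle.} There is no real obstacle: both directions are essentially mechanical consequences of Proposition~\ref{prop:aprioriSC} and Theorem~\ref{thm:necessarycondition}. The only conceptual point is to notice that~\eqref{eq:TPC} is precisely strong enough, under uniform continuity, to recover pointwise information about $b$, which is made possible by the fact that $R/V(x)^{1/4} \to 0$ at infinity.
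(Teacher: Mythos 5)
Your proof is correct and follows essentially the same route as the paper: Proposition~\ref{prop:aprioriSC} for the sufficiency, and for the necessity, Theorem~\ref{thm:necessarycondition} combined with uniform continuity and the fact that the averaging radius $R/V(x)^{1/4}$ shrinks as $x \to \infty$. The only cosmetic difference is that the paper extracts a point $y$ in the small ball with $b(y) \ge 2c/3$ and concludes $b(x) \ge c/3$, whereas you bound the ball average above by $b(x) + \varepsilon$ directly; the two arguments are interchangeable.
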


The point of Corollary~\ref{cor:characterizationregulardamping} is to show that the necessary condition~\eqref{eq:TPC} together with uniform continuity of $b$ forces the latter to be bounded from below in a neighborhood of infinity.

\subsection{Geometric and dynamical stabilization conditions}

Proposition~\ref{prop:aprioriNC} and Theorem~\ref{thm:necessarycondition} above provide geometric necessary conditions of stabilization. To understand completely the picture, it is relevant to unify the two conditions~\eqref{eq:GCC} and~\eqref{eq:TPC} into a single dynamical condition, accounting for the underlying classical dynamics (here a distorted geometric optics).

\subsubsection{Classical dynamics}

We briefly introduce a Hamiltonian flow associated with $P$ and recall the basic definitions that will be needed thereafter. Throughout the article, a typical point of $\R^d$ will be denoted by $x$ or $y$, whereas the points of the cotangent space $T_x^\star \R^d$ will be denoted by $\xi$ or $\eta$. We will also often use the notation $\rho$ or $\zeta$ to denote a running point $(x, \xi)$ or $(y, \eta)$ in $T^\star \R^d$. The cotangent space $T^\star \R^d$ equipped with the symplectic form $\sympf = \dd \xi \wedge \dd x$, the so-called \emph{phase space}, will be frequently identified with $\R^{2d}$.
The operator $P$ can be written as $P = \Opw{p}$, that is to say the Weyl quantization of the real-valued symbol $p(x, \xi) = V(x) + \frac{1}{2} \abs*{\xi}^2$ (see~\cite{Hoermander:V3,Lerner:10,Zworski:book}). Under Assumption~\ref{assum:assumptionspotentialdampedeq}, the potential $V$ is $C^2$, so we can define the Hamiltonian vector field associated with $p$ by
\begin{equation*}
H_p
	= \xi \cdot \partial_x - \nabla V(x) \cdot \partial_\xi .
\end{equation*}
The Hamiltonian flow\footnote{In fact, $V \in W_\loc^{2, \infty}(\R^d)$ would be sufficient to define the Hamiltonian flow via the Cauchy--Lipschitz theorem.} associated with $p$, which we denote by $(\varphi^t)_{t \in \R}$, is the flow generated by $H_p$, namely
\begin{equation*}
\dfrac{\dd}{\dd t} \varphi^t(\rho) = H_p\left(\varphi^t(\rho)\right) .
\end{equation*}
Writing $\varphi^t(\rho) = (x^t, \xi^t)$ the position and momentum components of the flow, we can rewrite this as
\begin{equation} \label{eq:Hamilton}
\begin{split}
\left\{
\begin{aligned}
\dot x^t
	&= \xi^t \\
\dot \xi^t
	&= - \nabla V(x^t)
\end{aligned}
\right. ,
\end{split}
\end{equation}
which corresponds to Newton's second law of classical mechanics with a force field $\vec F = - \nabla V$. In the setting of a confining potential~\eqref{eq:assumV}, which is our case throughout this paper, classical o.d.e.\ theory ensures that the integral curves of $H_p$ are well-defined for all times (the level sets of $p$ are compact). Notice that $p$ is preserved by the Hamiltonian flow by construction.

\subsubsection{Dynamical stabilization condition}

Given a continuous function $K$ on $T^\star \R^d$ and a real number $T> 0$, we denote by $\avg*{K}_T$ the function on $T^\star \R^d$ consisting in averaging $K$ along the trajectories of the Hamiltonian flow, namely
\begin{equation*}
\avg*{K}_T(\rho)
	:= \dfrac{1}{2T} \int_{-T}^T K\left( \varphi^t(\rho) \right) \dd t ,
		\qquad \rho \in T^\star \R^d .
\end{equation*}
If $K$ is a continuous function on $\R^d$, we write $\avg*{K}_T(\rho) := \avg*{K \circ \pi}_T(\rho)$ (with a slight abuse of notation), where $\pi : T^\star \R^d \to \R^d$ is the cotangent bundle projection $\pi(x, \xi) = x$.

\begin{definition}[Stabilization condition \--- Dynamical formulation] \label{def:stabconddyn}
Let $V$ be a $\cont^2$ potential such that the Hamiltonian flow $(\varphi^t)_{t \in \R}$ associated with $p(x, \xi) = V(x) + \frac{1}{2} \abs*{\xi}^2$ introduced in~\eqref{eq:Hamilton} is globally well-defined. We say a function $b \in L^\infty(\R^d)$ satisfies the \emph{Dynamical Stabilization Condition} if
\begin{equation} \label{eq:SCdyn}
\tag{DSC}
\exists T > 0, \exists R > 0 : \qquad
	\liminf_{\lambda \to + \infty} \inf_{\rho \in \{p = \lambda^2\}} \avg*{b \ast \kappa_{R/\sqrt{\lambda}}}_{T/\lambda}(\rho)  > 0 .
\end{equation}
\end{definition}

We refer to Subsection~\ref{subsec:scales} below for a discussion of the various scales appearing in this definition, namely the energy scale $\{p = \lambda^2\}$, the space-regularization scale $R/\sqrt{\lambda}$ and the time scale $T/\lambda$.

\begin{proposition} \label{prop:SCgeom}
Suppose $V$ is subject to~\eqref{eq:assumV} and Assumption~\ref{assum:assumptionspotentialdampedeq}. Then a non-negative function $b \in L^\infty(\R^d)$ satisfies the Dynamical Stabilization Condition~\eqref{eq:SCdyn} if and only if it satisfies the Uniform Geometric Control Condition~\eqref{eq:GCC} and the Turning Point Condition~\eqref{eq:TPC}:
\begin{equation*}
\eqref{eq:SCdyn}
	\quad \Longleftrightarrow \quad
\eqref{eq:GCC} + \eqref{eq:TPC} .
\end{equation*}
\end{proposition}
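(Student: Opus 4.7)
The plan is to prove the two implications separately, using throughout a single quantitative consequence of Assumption~\ref{assum:assumptionspotentialdampedeq}: writing $M_\lambda := \sup_{\{V \le \lambda^2\}} \abs*{\nabla V}$, one has $M_\lambda = o(\lambda^{3/2})$ as $\lambda \to \infty$, by combining strict sub-quarticity at infinity with local boundedness of $\nabla V$ and the confinement of $V$. This bound controls the deviation between a Hamiltonian trajectory of energy $\lambda^2$ and its tangent line on the matching time scale $T/\lambda$ and space scale $R/\sqrt{\lambda}$, and is what makes all the approximations below go through uniformly.

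For the direction $\eqref{eq:SCdyn} \Longrightarrow \eqref{eq:TPC} + \eqref{eq:GCC}$, I would test the Dynamical Stabilization Condition against two well-chosen families of phase-space points. At a turning point $\rho = (x, 0)$ with $\lambda = V(x)^{1/2}$, Hamilton's equations give $\abs*{x^s - x} \le s^2 M_\lambda/2 = o(\lambda^{-1/2})$ for $\abs*{s} \le T/\lambda$, and a direct ball-average comparison yields $(b \ast \kappa_{R/\sqrt\lambda})(x^s) \le 2^d (b \ast \kappa_{2R/\sqrt\lambda})(x)$ for $\lambda$ large; the \eqref{eq:SCdyn} lower bound therefore transfers to $(b \ast \kappa_{2R/V(x)^{1/4}})(x) \ge c > 0$, which is \eqref{eq:TPC}. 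To obtain \eqref{eq:GCC}, I would instead test \eqref{eq:SCdyn} against $\rho_\lambda = (x_0, \lambda \nu_0)$ for each fixed $(x_0, \nu_0) \in S\R^d$; this lies on the shell $\{p = \mu_\lambda^2\}$ with $\mu_\lambda = \sqrt{V(x_0) + \lambda^2/2} \sim \lambda/\sqrt 2$. The tangent-line approximation $x^s = x_0 + s\lambda\nu_0 + O(\lambda^{-2})$ for $\abs*{s} \le T/\mu_\lambda$, together with the change $t = s\lambda$, recasts the DSC time average (up to a factor $1+o(1)$) as a straight-line average $\fint_{-T\lambda/\mu_\lambda}^{T\lambda/\mu_\lambda}(b \ast \kappa_{R/\sqrt{\mu_\lambda}})(x_0 + t\nu_0) \dd t$, whose integration length tends to $T\sqrt 2$ and whose radius tends to $0$. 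Comparing with a fixed $T^* > T\sqrt 2$ via the elementary bound $\fint_{-T^*}^{T^*} \ge (T'/T^*) \fint_{-T'}^{T'}$ (valid for $T' \le T^*$ since $b \ge 0$), I deduce $\liminf_{r \to 0} \fint_{-T^*}^{T^*}(b \ast \kappa_r)(x_0 + t\nu_0)\dd t > 0$, which is~\eqref{eq:GCCnear0} and hence \eqref{eq:GCC} by Remark~\ref{rmk:GCCnear0}.

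For the converse implication $\eqref{eq:GCC} + \eqref{eq:TPC} \Longrightarrow \eqref{eq:SCdyn}$, I would dichotomize any $\rho = (x, \xi) \in \{p = \lambda^2\}$ using a small fixed $\epsilon_0 \in (0, 1)$. In the \emph{geodesic regime} $\abs*{\xi} \ge \epsilon_0 \lambda$, the estimate $M_\lambda = o(\lambda^{3/2})$ gives $\abs*{x^s - x - s\xi} = o(\lambda^{-1/2})$ uniformly for $\abs*{s} \le T/\lambda$; the change of variable $t = s\abs*{\xi}$, combined with ball-average comparison at radii $R/\sqrt\lambda$ and $R/(2\sqrt\lambda)$, bounds $\avg*{b \ast \kappa_{R/\sqrt\lambda}}_{T/\lambda}(\rho)$ from below by a constant multiple of a straight-line average over an interval of length $\ge 2\epsilon_0 T$, to which \eqref{eq:GCC} applies as soon as $T$ is chosen at least $T_{\eqref{eq:GCC}}/\epsilon_0$. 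In the \emph{turning-point regime} $\abs*{\xi} < \epsilon_0 \lambda$, energy conservation gives $V(x) \ge (1 - \epsilon_0^2/2)\lambda^2$, while $\bigl|\tfrac{\dd}{\dd\sigma} V(x^\sigma)\bigr| \le M_\lambda \abs*{\xi^\sigma} = o(\lambda^{5/2})$ forces $V(x^s) \ge \lambda^2/3$ throughout $\abs*{s} \le T/\lambda$ for $\lambda$ large. Choosing the DSC radius $R \ge 3^{1/4} R_{\eqref{eq:TPC}}$ then ensures $B_{R_{\eqref{eq:TPC}}/V(x^s)^{1/4}}(x^s) \subset B_{R/\sqrt\lambda}(x^s)$, and \eqref{eq:TPC} applied at $x^s$---which tends to infinity uniformly in $\rho$ by confinement of $V$---yields $(b \ast \kappa_{R/\sqrt\lambda})(x^s) \ge c > 0$ uniformly in $s$; averaging in $s$ then concludes.

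The main technical hurdle is the uniform tracking of the trajectory across both regimes at the common scales of \eqref{eq:SCdyn}. Strict sub-quarticity is precisely what is needed to keep the tangent-line error subdominant to the regularization scale $R/\sqrt\lambda$ in the geodesic case, and to keep $V(x^s)$ comparable to $\lambda^2$ throughout the averaging window in the turning-point case, thereby matching the DSC time scale $T/\lambda$ with the straight-line geometry of \eqref{eq:GCC} on the one hand, and with the turning-point scale $V(x)^{-1/4}$ of \eqref{eq:TPC} on the other. Everything else reduces to routine comparison of ball averages at commensurate radii.
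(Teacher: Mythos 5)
Your proposal is correct and follows essentially the same route as the paper: the forward direction tests \eqref{eq:SCdyn} at kinetic points $(x_0,\lambda\nu_0)$ and at turning points $(x,0)$, and the converse splits the shell $\{p=\lambda^2\}$ into the regimes $|\xi|\gtrsim\lambda$ (handled by \eqref{eq:GCC} after rescaling time) and $|\xi|\lesssim\lambda$ (handled by \eqref{eq:TPC} after comparing the radii $R/\sqrt{\lambda}$ and $R_{\eqref{eq:TPC}}/V(x^s)^{1/4}$), with your estimate $\sup_{\{V\le\lambda^2\}}|\nabla V|=o(\lambda^{3/2})$ playing exactly the role of the paper's Lemma~\ref{lem:lemV} and Lemma~\ref{lem:lemV2}. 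The only slip is the claimed tangent-line error $O(\lambda^{-2})$ in the \eqref{eq:GCC} step, which under strict sub-quarticity is in general only $o(\lambda^{-1/2})$; since this is still negligible compared with the regularization radius $R/\sqrt{\mu_\lambda}$, the argument is unaffected.
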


The Dynamical Stabilization Condition~\eqref{eq:SCdyn} is the natural candidate for characterizing the uniform stabilization of the damped wave equation~\eqref{eq:eq}. Indeed, it follows directly from the quantum-classical correspondence heuristics that the damping coefficient should capture all the high energy rays of geometric optics (modulo the non-obvious space-time scales $R/\sqrt{\lambda}$ and $T/\lambda$ described below in Subsection~\ref{subsec:scales}). The geometrical conditions~\eqref{eq:GCC} and~\eqref{eq:TPC} are more tractable in practice, so Proposition~\ref{prop:SCgeom} above gives us a very practical way to check that~\eqref{eq:SCdyn} is verified on concrete examples.

The Geometric Control Condition and the Turning Point Condition are related to the Dynamical Stabilization Condition in distinct phase space regions. Condition~\eqref{eq:GCC} concerns the kinetic regime where the Laplace operator rules over the potential. It is obtained by considering a fixed point $x_0 \in \R^d$ and looking at points over $x_0$ in the energy shell $\{p = \lambda^2\}$ with $\lambda \gg 1$. In a $O(1)$ neighborhood of $x_0$ on the energy layer $\{p= \lambda^2\}$, the Hamiltonian $p$ is well-approximated by $\abs{\xi}^2$, so that the flow lines look more and more like straight lines and project to rays onto the physical space (this corresponds to taking $x$ fixed and letting $\xi \to \infty$ on $V(x) + \frac{1}{2} \abs{\xi}^2 = \lambda^2$). In contrast, the condition~\eqref{eq:TPC} comes from the potential regime, where states have a large potential energy compared to their oscillation rate, i.e.\ $V$ prevails over the Laplacian (this corresponds to taking $\xi$ fixed and letting $x \to \infty$ on $V(x) + \frac{1}{2} \abs{\xi}^2 = \lambda^2$). This regime arises in phase space regions close to the null section but also close to infinity in space, where the classical dynamics has turning points: trajectories are transversal to the null section and project singularly onto the physical space.
Proposition~\ref{prop:SCgeom} can be understood as follows: controlling only the kinetic regime through~\eqref{eq:GCC} and the potential regime through~\eqref{eq:TPC} is sufficient to control any state on the energy shell $\{p = \lambda^2\}$.

Proposition~\ref{prop:SCgeom} suggests that the condition $\eqref{eq:GCC} + \eqref{eq:TPC}$ is sharp. This motivates the following statement.

\begin{conjecture}
Consider $V$ satisfying~\eqref{eq:assumV} and Assumption~\ref{assum:assumptionspotentialdampedeq}, and $b \in L^\infty(\R^d)$ non-negative. Then Equation~\eqref{eq:eq} is uniformly stable (in the sense of Definition~\ref{def:unifstab}) if and only if $b$ satisfies the Geometric Control Condition~\eqref{eq:GCC} and the Turning Point Condition~\eqref{eq:TPC}.
\end{conjecture}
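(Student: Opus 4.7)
The necessity direction is already supplied by Theorem~\ref{thm:necessarycondition}, so the task is to prove sufficiency: if $b$ satisfies both~\eqref{eq:GCC} and~\eqref{eq:TPC}, then~\eqref{eq:eq} is uniformly stable. By Proposition~\ref{prop:SCgeom} it is equivalent (and more convenient) to assume the Dynamical Stabilization Condition~\eqref{eq:SCdyn}, which is written directly in terms of the classical flow $\varphi^t$ and carries all the phase-space information in one package. The plan is to reduce uniform stability to a high-frequency resolvent estimate (Proposition~\ref{prop:quasimodes}) and then rule out the existence of contradicting quasimodes by a semiclassical defect measure argument at the scales $h = 1/\lambda$ dictated by~\eqref{eq:SCdyn}.

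First I would invoke the Gearhart--Huang--Prüss theorem: since $P$ has compact resolvent (because $V$ is confining), $i \R \subset \rho(\dwop_b)$ reduces to a unique continuation statement for $(P - \lambda^2) u = 0$ with $b u = 0$, which is standard in the $L^\infty$ setting on small open sets (and would anyway only concern a bounded range of $\lambda$). The crux is therefore the high-frequency resolvent bound, equivalent to the quasimode formulation: any family $(u_h)$ with $\|u_h\|_{L^2} = 1$, $(h^2 P - 1) u_h = o(h)_{L^2}$ and $\|b^{1/2} u_h\|_{L^2} \to 0$ must be vacuous as $h \to 0^+$. I would argue by contradiction starting from such a family.

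Next I would extract (after subsequence) a semiclassical defect measure $\mu$ on $T^\star \R^d$ associated with $(u_h)$. The three standard properties are: $\mu$ is supported in $\{p = 1/2\}$; $\mu$ is invariant under $\varphi^t$; and $\mu$ is a probability measure. The last point is where confinement genuinely intervenes: strict sub-quarticity of $V$ (Assumption~\ref{assum:assumptionspotentialdampedeq}) forbids mass escape to spatial infinity at the quasimode energy, so $\mu(T^\star \R^d) = 1$. Because $b$ is only $L^\infty$, one cannot test $\mu$ directly against $b$; instead one tests against the mollified $b \ast \kappa_{R h^{1/2}}$, which belongs to a suitable symbol class once $h^{1/2}$-regularization is used (this is exactly the spatial scale appearing in~\eqref{eq:SCdyn}). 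Combined with an Egorov theorem valid on the Ehrenfest-type window $|t| \le T h$, one concludes
\begin{equation*}
\int_{T^\star \R^d} \avg{b \ast \kappa_{R h^{1/2}}}_{T h}(\rho) \, \abs*{u_h(\rho)}^2_{\rm sc} \, \dd \rho \strongto{h \to 0} \int_{T^\star \R^d} \bar{b}(\rho) \, \dd \mu(\rho) ,
\end{equation*}
for some non-negative limit $\bar{b}$. The dissipation hypothesis forces the left-hand side to be $o(1)$, while~\eqref{eq:SCdyn} forces $\bar{b} \ge c > 0$ on $\{p = 1/2\} = \supp \mu$. Hence $\mu = 0$, contradicting $\mu(T^\star \R^d) = 1$.

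The main obstacle I expect is making the semiclassical argument consistent at the two non-standard scales simultaneously: the mollification scale $R h^{1/2}$ (intermediate between the classical scale $1$ and the semiclassical one $h$) and the short time scale $T h$. One must show that the spatial regularization of $b$ at scale $h^{1/2}$ is compatible with pseudodifferential calculus (symbols then lying in an $S(1, g)$-type class with a mildly degenerate metric), and that $\varphi^t$-averaging on the short window $[-Th, Th]$ survives the Egorov remainder estimates, which in particular requires the flow to be controlled on the support of $\mu$—again granted by the growth condition on $V$. A secondary delicate point is ensuring that no defect mass is lost to the turning-point region where $\xi$ is small: this is precisely the regime accounted for by~\eqref{eq:TPC} in the equivalence of Proposition~\ref{prop:SCgeom}, and its role in the proof is to guarantee that $\bar{b}$ remains uniformly positive on the part of $\{p = 1/2\}$ where trajectories travel slowly through physical space.
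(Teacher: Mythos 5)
The statement you are proving is stated in the paper as a \emph{conjecture}: the paper proves only the necessity half (Proposition~\ref{prop:aprioriNC} and Theorem~\ref{thm:necessarycondition}), and deliberately leaves sufficiency open. Your proposal does not close that gap: the steps you label as ``standard'' are precisely the ones that are not available in this setting. First, the scaling. With $h=1/\lambda$ and the flat quantization, the operator $h^2P=-\tfrac{h^2}{2}\Delta+h^2V$ does not have an $h$-independent symbol ($V$ is unbounded), and in the natural anisotropic scaling~\eqref{eq:semiclassicalrescalingpot} adapted to a confining potential the quasimodes furnished by Proposition~\ref{prop:quasimodes} have precision only $o(h^{1-\gamma})$ with $\gamma\in(0,1)$, not $o(h)$; the commutator argument behind the second item of Proposition~\ref{prop:suppflowinvSDM} therefore does not give flow invariance of the defect measure, and this is exactly the obstruction the paper points to (``sharp concentration properties of quasimodes appear at second-microlocal scales''). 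Second, compactness of the measure. In the scaling you actually use ($h=1/\lambda$, symbols on the fixed phase space), Proposition~\ref{prop:escape} shows that \emph{all} the $L^2$ mass of an $o(\lambda)$-quasimode leaves every compact set, so the defect measure you extract is zero on $T^\star\R^d$ and your claim $\mu(T^\star\R^d)=1$ ``by confinement'' is false; no contradiction with $\mu=0$ can be reached this way. Recovering a probability measure requires the rescaled phase space, which brings you back to the first problem and, for non-homogeneous $V$ under Assumption~\ref{assum:assumptionspotentialdampedeq}, to the absence of any clean global rescaling at all.

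Third, even granting a flow-invariant probability measure, the passage from~\eqref{eq:SCdyn} to a lower bound $\bar b\ge c$ on the support of $\mu$ is not a consequence of any stated result: \eqref{eq:SCdyn} is a statement about averages over the shrinking time window $T/\lambda$ and the intermediate mollification scale $R/\sqrt{\lambda}$, which a scale-one defect measure cannot resolve; your displayed convergence statement (testing $|u_h|^2$ against $\avg{b\ast\kappa_{Rh^{1/2}}}_{Th}$ and passing to a limit $\bar b$) is asserted, not proved, and it is where the difficulty of rough $b$ (only $L^\infty$, so only an $S_{1/2}$-type mollified symbol), the Egorov-type estimate on the nonstandard time scale $t\sim 1/\lambda$, and the turning-point region (where trajectories project singularly and~\eqref{eq:TPC} is supposed to act) all concentrate. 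These are the open points that make the sufficiency direction a conjecture rather than a theorem, and the proposal does not supply arguments for any of them. The only portions of your outline that are sound are the reduction to the resolvent estimate/quasimode formulation (Proposition~\ref{prop:quasimodes}) and the use of Proposition~\ref{prop:SCgeom} to replace $\eqref{eq:GCC}+\eqref{eq:TPC}$ by~\eqref{eq:SCdyn}, both of which are already in the paper.
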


The dynamical viewpoint of~\eqref{eq:SCdyn} seems to be the best one to quantify the decay rate, namely the optimal rate $\tau$ in~\eqref{eq:decayenergy} (at least for high energy initial data), as Lebeau did in compact manifolds~\cite{Leb:96}. We define in Appendix~\ref{app:constant} a quantity which we guess could be related to this optimal decay rate.

\subsubsection{Typical space-time scales} \label{subsec:scales}

We explain the typical scales arising in~\eqref{eq:SCdyn}. In view of the quantum-classical correspondence principle (or Egorov's theorem, see~\cite[Chapter 11]{Zworski:book} or~\cite[Chapter 7, Section 8]{Taylor:vol2}), one can guess that a sharp geometric condition on $b$ to ensure stabilization would be that the average of $b$ along any high-energy bicharacterisitic of the classical Hamiltonian $p = \lambda^2$ should be bounded from below. More precisely, assuming that $b$ is continuous, these averages should be of the form
\begin{equation*}
\fint_{-t}^t b\bigl( (\pi \circ \varphi_s)(x, \xi) \bigr) \dd s
\end{equation*}
where $\pi : T^\star \R^d \to \R^d$ is the cotangent bundle projection, $(x, \xi) \in \{p = \lambda^2\}$ for $\lambda$ large, and $t > 0$ may depend on $\lambda$. In the case of a rough damping coefficient $b$, it is natural to regularize it by convolution, as we did for the Geometric Control Condition. Hence we should consider quantities of the form
\begin{equation*}
\fint_{-t}^t (b \ast \kappa_r) \left( (\pi \circ \varphi_s)(x, \xi) \right) \dd s ,
\end{equation*}
where $\kappa_r$ was introduced in~\eqref{eq:defkappa}, and the scaling factor $r > 0$ possibly depends on $\lambda$. See~\cite{BG:20,P:23,Leautaud:23} for related discussions on this question of space-regularization scale. To figure out the relevant scale for $t$ as a parameter depending on $\lambda$, we can investigate the dynamical consequence of being a $o(\lambda)$-quasimode of $P$, namely a family $(u_\lambda)_\lambda$ in $\dom P$ such that
\begin{equation} \label{eq:qmintro}
(P - \lambda^2) u_\lambda = o_{L^2}(\lambda) ,
	\qquad
\norm*{u_\lambda}_{L^2} = 1 ,
	\qquad \quad \rm{as } \;\, \lambda \to + \infty .
\end{equation}
The relevance of these quasimodes is given by Proposition~\ref{prop:quasimodes}, which states that uniform stability fails exactly when there exists such a {$o(\lambda)$-quasimode} escaping damping, namely $\inp{u_\lambda}{b u_\lambda}_{L^2} = o(1)$ as $\lambda \to + \infty$. If $(u_\lambda)_\lambda$ satisfies~\eqref{eq:qmintro}, since $\e^{\ii s P}$ is an isometry for any $s$, using the mean-value inequality, we obtain
\begin{equation} \label{eq:typicalscales}
\e^{\ii t (P - \lambda^2)} u_\lambda - u_\lambda = \int_0^t \ii \e^{\ii s (P - \lambda^2)} (P - \lambda^2) u_\lambda \dd s = o_{L^2}(t \lambda) ,
	\qquad \norm*{u_\lambda}_{L^2} = 1 .
\end{equation}
That means that $u_\lambda$ is nearly invariant under the action of the propagator $\e^{\ii t P}$ (up to a phase factor) on a time scale of order $t \approx T/\lambda$:
\begin{equation*}
\e^{\ii t P} u_\lambda
	= \e^{\ii t \lambda^2} u_\lambda + o_{L^2}(1) ,
		\qquad \textrm{uniformly for} \; t \in [0, T/\lambda] .
\end{equation*}
Here $T$ a fixed constant. As for the parameter $r$, the point is to understand what is the critical space concentration scale of $o(\lambda)$-quasimodes. This can be done roughly through the following observation, which arises when one tries to construct quasimodes~\eqref{eq:qmintro} using the WKB method (see~\cite[Chapter 2]{DS:book}). We think of this quasimode $u_\lambda$ as a Lagrangian state $u_\lambda = f_\lambda \e^{\ii \lambda \psi_\lambda}$, with envelope $f_\lambda$ (which localizes the quasimode in space) and phase $\e^{\ii \lambda \psi_\lambda}$, $\psi_\lambda : \R^d \to \R$ (which localizes the quasimode in frequency). Assume this quasimode concentrates in a tubular neighborhood of radius $r$ of a bicharacteristic projected onto the physical space (namely the $x^t$ component of a trajectory of the Hamiltonian flow; see~\eqref{eq:Hamilton}). Then the gradient of the envelope $f_\lambda$ has to be of order $1/r$, or in other words, the Fourier transform of $f_\lambda$ is mostly concentrated in a window of length $1/r$. In order $u_\lambda$ to be a $o(\lambda)$-quasimode, this frequency contribution of the envelope function should fit in the remainder $o(\lambda)$ so that it does not disrupt the microlocalization of the wave packet, and especially the frequency localization ensured by the phase $\e^{i \lambda \psi_\lambda}$. In particular, the amount of kinetic energy carried by the envelope $f_\lambda$, which is contained in the term $\e^{i \lambda \psi_\lambda} \Delta f_\lambda \approx 1/r^2$, should actually be $o(\lambda)$, and therefore the critical concentration scale appears to be $r \approx R/\sqrt{\lambda}$, with $R \gg 1$.

\subsection{Strategy of proof and plan of the paper}

Let us outline the content of the paper.

In Section~\ref{sec:background}, we introduce classical definitions from semiclassical analysis and we recall that the uniform exponential decay of the energy is equivalent to a particular resolvent estimate for the damped wave operator.

In Section~\ref{sec:aprioriNCSC}, we prove Propositions~\ref{prop:aprioriNC} and~\ref{prop:aprioriSC}, i.e.\ the a priori necessary and sufficient conditions that do not take into account the potential. The scheme of the proof is classical: for the necessary condition, we construct quasimodes (in fact wave packets) concentrating around any segment in $\R^d$. For the sufficient condition, we prove that the energy of {$o(h)$-quasimodes} of the semiclassical Laplacian escapes at infinity in space by a defect measure argument. We then deduce that the same holds for the quasimodes of the operator $P$ relevant to this problem.

Section~\ref{sec:necessarycondition} discusses the finer necessary condition of Theorem~\ref{thm:necessarycondition}. The proof is based on the construction of suitable quasimodes with an optimal space concentration near the turning points of the classical dynamics. Then we prove Corollary~\ref{cor:characterizationregulardamping} (concerning uniformly continuous damping coefficients) as a direct application. The main ingredient of this section is a relatively simple instance of the WKB method that we adapt to our setting. We find approximate solutions (quasimodes) to $(V - \frac{1}{2} \Lap  - \lambda^2) u_\lambda = 0$ under the form $u_\lambda = f_\lambda \e^{\ii \lambda \psi_\lambda}$, as we mentioned at the end of Subsection~\ref{subsec:scales} above. Since we are mainly concerned with the ``potential regime" in Theorem~\ref{thm:necessarycondition}, namely the phase space regions where the potential prevails, those quasimodes are particularly simple, since the phase function $\psi_\lambda$ is chosen to be constant equal to $0$.
Notice that our setting is quite different from the usual WKB method: we are not focusing on a fixed energy layer of a semiclassical Schrödinger operator $- h^2 \Lap + V$ as it is usually the case, but rather consider the high energy regime for $- \Lap + V$.

Finally, we deal with Proposition~\ref{prop:SCgeom} in Section~\ref{sec:stabilizationcondition}, by roughly showing that the Hamiltonian flow is well approximated by its linearization on the time scale under consideration.

We recall in Appendix~\ref{app:pseudo} classical results on the Weyl quantization and pseudo-differential calculus. Technical remarks and lemmata on conditions~\eqref{eq:GCC} and~\eqref{eq:SCdyn} are collected in Appendix~\ref{app:constant}.

\addtocontents{toc}{\SkipTocEntry}
\subsection*{Acknowledgments} I am grateful to Matthieu Léautaud for our regular discussions and his advice on this project, and for his comments on a preliminary version of this article. This research project was partly conducted while visiting Universidad Politécnica de Madrid during the academic year 2020-2021. I thank this institution for its hospitality. I also thank Fabricio Macià for numerous discussions on this topic.

\section{Toolbox of semiclassical analysis and reduction of the problem to a resolvent estimate} \label{sec:background}

Let us recall basic definitions and well-known results that will be needed afterwards.
We recall that the potential is assumed to be non-negative and confining~\eqref{eq:assumV} throughout this article.

\subsection{Weyl quantization and semiclassical defect measures}

To any function $a \in \sch(T^\star \R^d)$, we associate an operator on $\sch(\R^d)$, denoted by $\Opw{a}$ and defined by
\begin{equation} \label{eq:defquantization}
\left[\Opw{a} u\right](x)
= (2 \pi)^{-d} \int_{\R^d \times \R^d} \e^{\ii \xi \cdot (x - y)} a \left( \dfrac{x + y}{2}, \xi \right) u(y) \dd y \dd \xi ,
\qquad \forall u \in \sch(\R^d) .
\end{equation}
This operator is called a pseudo-differential operator and it maps $\sch(\R^d)$ to itself (see for instance~\cite[Theorem 4.16]{Zworski:book}). The function $a$ is called the symbol of $\Opw{a}$. See Appendix~\ref{app:pseudo} for an account of the classical properties of pseudo-differential operators.

We also introduce the semiclassical rescaling of the Weyl quantization:
\begin{equation} \label{eq:semiclassicalrescalingLap}
\Opw[h]{a}
	= \Opw{a(x, h \xi)} ,
		\qquad h \in (0, 1] .
\end{equation}

When studying particular sequences of $L^2$ functions in the semiclassical limit, semiclassical defect measures are a good tool to record their microlocalization, that is the asymptotic localization in phase space when $h \to 0^+$. For any bounded sequence $(u_h)_{h > 0}$ in $L^2(\R^d)$, the Calder\'{o}n--Vaillancourt Theorem (Theorem~\ref{thm:CV}), together with a diagonal extraction and the G{\aa}rding inequality, allows to prove that there exists a subsequence $(h_n)_{n \in \N}$ and a non-negative Radon measure $\mu$ on $T^\star \R^d$ with finite mass, which is called a \emph{semiclassical defect measure}, such that $h_n \to 0^+$ and
\begin{equation*}
\forall a \in C_c^\infty(T^\star \R^d) , \qquad \inp*{u_{h_n}}{\Opw[h_n]{a} u_{h_n}}_{L^2} \strongto{n \to \infty} \int_{T^\star \R^d} a (x, \xi) \dd \mu(x, \xi)
\end{equation*}
(we refer to~\cite[Theorem 5.2]{Zworski:book} for more details about this construction; see also~\cite{Gerard:91,LionsPaul}). When $(u_h)_{h > 0}$ has additional properties, more can be said about the limiting measure $\mu$, using pseudo-differential calculus (Theorem~\ref{thm:pseudodiffcalc}).

\begin{proposition}[{Property of semiclassical defect measures \--- \cite[Theorems 5.3-5.4]{Zworski:book}}] \label{prop:suppflowinvSDM}
Assume $(u_h)_{h > 0}$ is a bounded sequence in $L^2(\R^d)$ and let $\mu$ be a semiclassical defect measure. Then
\begin{itemize}[label=\textbullet]
\item if $(- h^2 \frac{1}{2} \Delta - 1) u_h = o_{L^2}(1)$, then $\supp \mu \subset \{ \abs{\xi} = 1 \}$;
\item if $(- h^2 \frac{1}{2} \Delta - 1) u_h = o_{L^2}(h)$, then $\mu$ is invariant by the flow $\phi^t(x, \xi) = (x + t \xi, \xi)$, namely $(\phi^t)_\ast \mu = \mu, \forall t \in \R$.
\end{itemize}
\end{proposition}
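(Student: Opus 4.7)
The plan is to prove both items by testing the quantization of well-chosen symbols against $u_h$ and invoking the semiclassical pseudo-differential calculus recalled in Appendix~\ref{app:pseudo}. Throughout, set $p(x, \xi) := \tfrac12 |\xi|^2 - 1$, so that $\Opw[h]{p} = - h^2 \tfrac12 \Delta - 1$.

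\textbf{First item (support).} I would fix $a \in C_c^\infty(T^\star \R^d)$ with $\supp a \cap \{|\xi| = 1\} = \emptyset$ and factor
\begin{equation*}
a = b \cdot p , \qquad b(x, \xi) := \dfrac{a(x, \xi)}{\tfrac12 |\xi|^2 - 1} \in C_c^\infty(T^\star \R^d) .
\end{equation*}
The composition formula for the Weyl quantization then yields
\begin{equation*}
\Opw[h]{a} = \Opw[h]{b} \, \Opw[h]{p} + h R_h ,
\end{equation*}
with $R_h$ uniformly bounded on $L^2(\R^d)$ by the Calderón--Vaillancourt theorem. Pairing with $u_h$, the first term is $o(1)$ by the hypothesis $\Opw[h]{p} u_h = o_{L^2}(1)$ together with the $L^2$-boundedness of $\Opw[h]{b}$, and the second is $O(h)$. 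Passing to the limit along the subsequence defining $\mu$ gives $\int a \, \dd \mu = 0$ for every such $a$, hence $\supp \mu \subset \{|\xi| = 1\}$.

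\textbf{Second item (flow invariance).} Here I would test against commutators. The first-order Weyl expansion gives
\begin{equation*}
\tfrac{i}{h} \bigl[ \Opw[h]{p}, \Opw[h]{a} \bigr] = \Opw[h]{\{p, a\}} + h^2 S_h = \Opw[h]{\xi \cdot \partial_x a} + h^2 S_h ,
\end{equation*}
with $S_h$ uniformly bounded on $L^2(\R^d)$. Expanding the commutator and using the self-adjointness of $\Opw[h]{p}$,
\begin{equation*}
\bigl\langle u_h, \tfrac{i}{h} [\Opw[h]{p}, \Opw[h]{a}] u_h \bigr\rangle = \tfrac{i}{h} \bigl\langle \Opw[h]{p} u_h, \Opw[h]{a} u_h \bigr\rangle - \tfrac{i}{h} \bigl\langle \Opw[h]{a}^\ast u_h, \Opw[h]{p} u_h \bigr\rangle ,
\end{equation*}
and each term is bounded in modulus by $h^{-1} \norm*{\Opw[h]{p} u_h}_{L^2}$ times a uniformly bounded quantity; the stronger hypothesis $\Opw[h]{p} u_h = o_{L^2}(h)$ turns this into $o(1)$. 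Letting $h = h_n \to 0$ yields $\int \xi \cdot \partial_x a \, \dd \mu = 0$ for every $a \in C_c^\infty(T^\star \R^d)$. To upgrade this infinitesimal identity to full flow invariance, I would substitute $a \circ \phi^t$ for $a$ and observe $\tfrac{\dd}{\dd t} \int a \circ \phi^t \, \dd \mu = \int (\xi \cdot \partial_x a) \circ \phi^t \, \dd \mu = 0$, so that $t \mapsto \int a \circ \phi^t \, \dd \mu$ is constant and $(\phi^t)_\ast \mu = \mu$ for every $t \in \R$.

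The only technical point is invoking the pseudo-differential calculus at the right level of precision: composition modulo $O(h)$ in operator norm suffices for the first item, whereas the second requires the sharper expansion of $h^{-1}[\,\cdot\,,\,\cdot\,]$ with the Poisson bracket as principal symbol and an $O(h^2)$ remainder. Both are direct consequences of the statements gathered in Appendix~\ref{app:pseudo}, so I do not expect any genuine obstacle.
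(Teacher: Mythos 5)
Your argument is the standard one from the cited source (Zworski, Theorems 5.3--5.4), which the paper itself does not reprove: division by the symbol away from its zero set for the localization statement, and the commutator/Poisson-bracket computation followed by integration along the flow for the invariance. The overall structure is correct, and the final upgrade from $\int \xi \cdot \partial_x a \, \dd\mu = 0$ to $(\phi^t)_\ast \mu = \mu$ is fine, since $a \circ \phi^t$ stays in $\cont_\comp^\infty(T^\star\R^d)$ and differentiation under the integral is justified by the finiteness of $\mu$.

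Two points should be fixed. First, a normalization slip: the symbol of $-h^2\tfrac12\Delta - 1$ is $p(x,\xi) = \tfrac12\abs{\xi}^2 - 1$, whose zero set is $\{\abs{\xi} = \sqrt{2}\}$, not $\{\abs{\xi} = 1\}$. Your quotient $b = a/p$ is a smooth compactly supported symbol only when $\supp a$ avoids $\{p = 0\} = \{\abs{\xi} = \sqrt{2}\}$; for an $a$ disjoint from $\{\abs{\xi}=1\}$ but meeting $\{\abs{\xi}=\sqrt 2\}$ the division step fails. What your argument actually proves is $\supp\mu \subset \{\tfrac12\abs{\xi}^2 = 1\}$, which is the version the paper uses afterwards (Lemma~\ref{lem:mu=0Lap}, and the cutoff $\chi \equiv 1$ near $\sqrt 2$ in the proof of Proposition~\ref{prop:escapeLap}); the set $\{\abs{\xi}=1\}$ in the statement should be read as the characteristic variety, and your hypothesis on $a$ adjusted to match it. Second, mind the symbol classes when invoking the calculus: $p \notin S(1)$, so the Calder\'on--Vaillancourt theorem does not apply to the remainders as such. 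You should record that $a, b \in S(\jap{\xi}^{-2})$ (compact support) while $p \in S(\jap{\xi}^{2})$, so that $m_1 m_2 = 1$ and Theorem~\ref{thm:pseudodiffcalc} gives remainders bounded on $L^2$ --- exactly the device used in the proof of Proposition~\ref{prop:escapeLap}. With only that theorem, the rescaled commutator satisfies $\tfrac{\ii}{h}\bigl[\Ophw{p}, \Ophw{a}\bigr] = \Ophw{\{p,a\}} + O_{\Bop(L^2)}(h)$, not $h^2 S_h$ as you wrote; the stronger claim does hold because $p$ is a quadratic polynomial in $\xi$ (the Weyl expansion is then exact), but that requires an extra remark and is not needed, since an $O_{\Bop(L^2)}(h)$ remainder already yields $\int \{p,a\} \, \dd\mu = 0$.
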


\subsection{Weyl--Heisenberg translation operators}

Given a point $\rho_0 = (x_0, \xi_0) \in T^\star \R^d$, we define $T_{\rho_0}$ to be the operator acting as
\begin{equation} \label{eq:deftranslation}
T_{\rho_0} u(x) = \e^{- \frac{\ii}{2} \xi_0 \cdot x_0} \e^{\ii \xi_0 \cdot x} u (x - x_0) , \qquad \forall u \in \sch(\R^d) .
\end{equation}
This is clearly a continuous operator mapping $\sch(\R^d)$ to itself, and it can be extended to a continuous operator $\sch'(\R^d) \to \sch'(\R^d)$. Moreover, it is unitary on $L^2(\R^d)$ with inverse $T_{-\rho_0}$. Note that we have the following identity:
\begin{equation*}
T_{\rho_0} T_{\rho_1} = \e^{\frac{\ii}{2} \sympf(\rho_0, \rho_1)} T_{\rho_0 + \rho_1} , \qquad \forall \rho_0, \rho_1 \in T^\star \R^d .
\end{equation*}
These operators allow to perform translations at the level of symbols via the equality
\begin{equation*}
T_{\rho_0} \Opw{a} T_{- \rho_0} = \Opw{a(\rho - \rho_0)} , \qquad \forall a \in \sch'(T^\star \R^d), \forall \rho_0 \in T^\star \R^d ,
\end{equation*}
which is a particular case of Egorov's Theorem (see~\cite{Zworski:book} for instance). In addition, we have the noteworthy intertwining relation with the Fourier transform:
\begin{equation} \label{eq:intertwiningr}
T_{J \rho_0} \ft = \ft T_{\rho_0} ,
	\qquad J = \begin{pmatrix} 0 & \id \\ -\id & 0 \end{pmatrix}
\end{equation}
($J$ is the usual symplectic matrix here; in particular $\sympf(\rho_0, \rho_1) = J \rho_0 \cdot \rho_1$).

\subsection{Resolvent estimate} \label{subsec:resolventestimate}

A crucial first step in our study is to reformulate the question of uniform stability as a stationary problem, under the form of a resolvent estimate. This is a common method to study the damped wave equation, see e.g.~\cite{Leb:96,AL:14,BJ:16}. In the context of uniform stability (Definition~\ref{def:unifstab}), a useful link between the evolution problem and the resolvent estimate is given by the following fundamental result, whose proof can be found in~\cite[Chapter V, Theorem 1.11]{EN:book}.

\begin{theorem}[Gearhart, Prüss, Huang, Greiner \--- {\cite{Gearhart:78,Huang:85,Pruss:84}}] \label{thm:GPHG}
A strongly continuous semigroup $\left(T(t)\right)_{t \in \R_+}$ on a Hilbert space $H$ is uniformly exponentially stable if and only if the half plane $\set{z \in \CC}{\Re z \ge 0}$ is contained in the resolvent set of the generator $A$, with the resolvent satisfying
\begin{equation*}
\sup_{\lambda \in \R} \norm*{(A - \ii \lambda)^{-1}}_{\Bop(H)} < \infty .
\end{equation*}
\end{theorem}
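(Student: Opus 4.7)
My plan is to handle the two directions separately.

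For the necessary direction, I would start by noting that if $\|T(t)\|_{\Bop(H)} \le C e^{-t/\tau}$ with $\tau > 0$, then for every $z$ with $\Re z \ge 0$ the Laplace transform $R(z) := -\int_0^\infty e^{-zt} T(t) \, dt$ converges absolutely in operator norm with $\|R(z)\|_{\Bop(H)} \le C \tau$. A routine integration by parts against vectors in $D(A)$ would then identify $R(z) = (A-z)^{-1}$, which yields the uniform resolvent bound on the imaginary axis.

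For the sufficient direction, assume $M := \sup_{\lambda \in \R} \|(A - \ii \lambda)^{-1}\|_{\Bop(H)} < \infty$. My strategy would be to show that every orbit $t \mapsto T(t) u$ belongs to $L^2(\R_+, H)$ and then invoke the Datko--Pazy theorem. I would proceed in three steps. First, a Neumann series based on $(A-z)^{-1} = (A - \ii \lambda)^{-1} \sum_{n \ge 0}[(z - \ii \lambda)(A - \ii\lambda)^{-1}]^n$ would extend the resolvent and the bound $2M$ to the strip $\{z : |\Re z| < 1/(2M)\}$. Second, for $u \in D(A)$ and small $\sigma > 0$, I would set $f_\sigma(t) := e^{-\sigma t} T(t) u$ for $t \ge 0$ and zero otherwise; its Fourier transform is $\widehat{f_\sigma}(\lambda) = -(A - \sigma - \ii \lambda)^{-1} u$, so the Hilbert-space-valued Plancherel identity would give
\begin{equation*}
\int_0^\infty e^{-2 \sigma t} \|T(t) u\|_H^2 \, dt = \frac{1}{2\pi} \int_\R \|(A - \sigma - \ii \lambda)^{-1} u\|_H^2 \, d\lambda .
\end{equation*}
Third, I would pass to the limit $\sigma \to 0^+$ by monotone convergence.

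The hard part will be ensuring that the right-hand side above stays finite (and uniformly bounded in $\sigma$), since the $L^\infty$-bound $M$ on the resolvent is not integrable in $\lambda$. The decisive trick I would use is to exploit the regularity of $u$: rewriting $(A-z)^{-1} u = z^{-1}[-u + (A-z)^{-1} A u]$ shows that $\|(A - \sigma - \ii \lambda)^{-1} u\|_H$ decays like $|\lambda|^{-1}$ as $|\lambda| \to \infty$, uniformly in small $\sigma$, so the integral is finite and controlled by a constant times $\|u\|_H^2 + \|A u\|_H^2$. A polarization/density argument applied to the scalar Plancherel identity for $t \mapsto \langle T(t) u, v \rangle$ would then upgrade this to the clean estimate $\int_0^\infty \|T(t) u\|_H^2 \, dt \le C \|u\|_H^2$ valid for all $u \in H$, after which Datko--Pazy delivers the uniform exponential decay. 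This final upgrade is where the Hilbert space structure enters decisively; the analogous statement is known to fail on general Banach spaces, so any purely Banach-space argument would be bound to break down here.
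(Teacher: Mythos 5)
Your first direction is fine: with $\|T(t)\|\le C e^{-t/\tau}$ the Laplace integral converges on $\{\mathrm{Re}\, z>-1/\tau\}$ and identifies the resolvent there, giving both the inclusion of the closed half-plane in the resolvent set and the uniform bound. (For comparison: the paper does not prove this theorem at all; it cites Engel--Nagel, Chapter V, Theorem 1.11, whose hypothesis is the resolvent bound on the whole open half-plane.) The genuine gap is in your second step of the converse. The identity $\widehat{f_\sigma}(\lambda)=-(A-\sigma-\ii\lambda)^{-1}u$ is the Laplace-transform representation of the resolvent, which is valid only for $\sigma$ strictly larger than the growth bound $\omega_0(T)$. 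Nothing in the hypotheses forces $\omega_0(T)\le 0$: excluding exponential growth is precisely the content of the theorem, so for small $\sigma>0$ the function $f_\sigma(t)=\e^{-\sigma t}T(t)u$ need not belong to $L^1+L^2$, its Fourier transform need not exist, and the identification with the resolvent is circular. (It is harmless in the paper's application, where the damped-wave semigroup is a contraction, hence $\omega_0\le0$; but the statement is for general $C_0$-semigroups.) The standard repairs are: run Plancherel at an abscissa $\omega>\omega_0$, where the representation is legitimate, transfer the $L^2_\lambda$ bound down to small abscissas through the resolvent identity $(A-\sigma-\ii\lambda)^{-1}u=(A-\omega-\ii\lambda)^{-1}u-(\omega-\sigma)(A-\sigma-\ii\lambda)^{-1}(A-\omega-\ii\lambda)^{-1}u$ combined with the uniform resolvent bound, and identify $\e^{-\sigma t}T(t)u$ as the inverse transform by a Paley--Wiener/uniqueness-of-Laplace-transform argument; this is exactly where the resolvent bound on the whole half-plane (Engel--Nagel's formulation) or the boundedness of the semigroup is used, and under the axis-only hypothesis one must first reduce to that situation. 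Alternatively, Engel--Nagel's contour-shift representation of $\langle T(t)x,y\rangle$ through the square of the resolvent yields $\|T(t)\|\le C/t$ and hence stability.

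A second, smaller but real, gap is your final upgrade. For $u\in D(A)$ your estimate has the form $\int_0^\infty\|T(t)u\|^2\,\dd t\le C\bigl(\|u\|^2+\|Au\|^2\bigr)$, and this does not pass to all $u\in H$ ``by polarization/density'': the $\|Au\|^2$ term does not disappear along an approximating sequence, and Datko--Pazy requires finiteness of $\int_0^\infty\|T(t)u\|^2\,\dd t$ for every $u\in H$, not merely on a dense subspace. The clean fix is to drop the $D(A)$ trick altogether: once Plancherel is justified at the abscissa $\omega>\omega_0$, the resolvent identity above, together with the pointwise bound $2M$ on the strip obtained in your first step, gives $\|(A-\sigma-\ii\lambda)^{-1}u\|\le(1+2M\omega)\|(A-\omega-\ii\lambda)^{-1}u\|$, whose $L^2_\lambda$-norm is controlled by a constant times $\|u\|$ for every $u\in H$; after that your monotone convergence and Datko--Pazy steps go through and deliver the clean bound $\int_0^\infty\|T(t)u\|^2\,\dd t\le C\|u\|^2$ directly.
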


Applying this theorem to the damped wave operator $\dwop_b$, we deduce a characterization of the uniform stability of~\eqref{eq:eq}. In fact, it is more convenient to work with the converse statement, characterizing the failure of uniform stability.

\begin{proposition} \label{prop:quasimodes}
Assume $V$ is subject to~\eqref{eq:assumV} and that the non-negative damping coefficient $b \in L^\infty(\R^d)$ does not identically vanish. The equation~\eqref{eq:eq} is not uniformly stable if and only if there exists a sequence $(u_n)_{n \in \N}$ in $\dom P$ with $\norm*{u_n}_{L^2} = 1, \forall n \in \N$, and a sequence of real numbers $\lambda_n \to + \infty$, such that
\begin{enumerate}
\item\label{it:quasimode} $(u_n)_{n \in \N}$ is a \emph{$o(\lambda_n)$-quasimode} of $P$, namely $(P - \lambda_n^2) u_n = o_{L^2}(\lambda_n)$ as $n \to \infty$;
\item\label{it:asymptundamped} $\inp*{u_n}{b u_n}_{L^2} \to 0$ as $n \to \infty$ (the $u_n$'s are ``asymptotically undamped").
\end{enumerate}
\end{proposition}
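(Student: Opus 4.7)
The plan is to invoke the Gearhart--Prüss--Huang--Greiner theorem (Theorem~\ref{thm:GPHG}). Since the energy balance~\eqref{eq:energybalance} makes the semigroup $(\e^{t\dwop_b})_{t \ge 0}$ contractive on $\hilbert$, the open right half-plane is contained in $\rho(\dwop_b)$, and uniform stability fails if and only if $\sup_{\lambda \in \R} \norm*{(\dwop_b - \ii\lambda)^{-1}}_{\Bop(\hilbert)} = +\infty$. To bridge this with an $L^2$-quasimode condition, I would eliminate $v$ from the $2 \times 2$ system $(\dwop_b - \ii\lambda)(u, v) = (F_1, F_2)$: one obtains $v = \ii\lambda u + F_1$ and $Q(\lambda) u = -\ii\lambda F_1 - b F_1 - F_2$, where $Q(\lambda) := P - \lambda^2 + \ii\lambda b$. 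Since $V$ is confining and non-negative, $P$ has a spectral gap $P \ge m > 0$, and a direct computation then yields the two-sided comparison $\norm*{(\dwop_b - \ii\lambda)^{-1}}_{\Bop(\hilbert)} \asymp |\lambda| \norm*{Q(\lambda)^{-1}}_{\Bop(L^2)}$ as $|\lambda| \to \infty$.

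The converse implication is the easier one: given $(u_n, \lambda_n)$ satisfying~(1) and~(2), I would set $U_n := (u_n, \ii\lambda_n u_n)$. A direct calculation gives $(\dwop_b - \ii\lambda_n) U_n = (0, -(P - \lambda_n^2) u_n - \ii\lambda_n b u_n)$, whose $\hilbert$-norm is bounded by $\norm*{(P - \lambda_n^2) u_n}_{L^2} + \lambda_n \norm*{b u_n}_{L^2}$. The first summand is $o(\lambda_n)$ by~(1); the pointwise inequality $b^2 \le \norm*{b}_\infty b$ combined with~(2) yields $\norm*{b u_n}_{L^2}^2 \le \norm*{b}_\infty \inp*{u_n}{b u_n}_{L^2} = o(1)$, so the second summand is $o(\lambda_n)$ as well. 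Since $\norm*{U_n}_{\hilbert}^2 = \inp*{P u_n}{u_n}_{L^2} + \lambda_n^2 = 2\lambda_n^2 + o(\lambda_n)$, normalization produces $\tilde U_n$ of unit $\hilbert$-norm with $(\dwop_b - \ii\lambda_n) \tilde U_n \to 0$, precluding uniform stability by GPHG.

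For the direct implication, observe that $\dwop_b$ has compact resolvent (since $\dom P \oplus \dom P^{1/2} \hookrightarrow \hilbert$ is compact for $V$ confining), so $\sigma(\dwop_b)$ is discrete. A purely imaginary eigenvalue $\ii\lambda_0$ would produce $u \ne 0$ with $Q(\lambda_0) u = 0$; taking the imaginary part of $\inp*{Q(\lambda_0) u}{u}_{L^2} = 0$ gives $b u \equiv 0$ when $\lambda_0 \ne 0$ (the case $\lambda_0 = 0$ is excluded by $P > 0$), and this combined with $(P - \lambda_0^2) u = 0$ forces $u \equiv 0$ by unique continuation. Hence $\sigma(\dwop_b) \cap \ii\R = \emptyset$, so $\lambda \mapsto \norm*{(\dwop_b - \ii\lambda)^{-1}}_{\Bop(\hilbert)}$ is locally bounded on $\R$, and the comparison from the first paragraph produces $\lambda_n \to +\infty$ (using the symmetry $\lambda \mapsto -\lambda$) along which $\lambda_n \norm*{Q(\lambda_n)^{-1}}_{\Bop(L^2)} \to \infty$. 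Select $u_n \in \dom P$ with $\norm*{u_n}_{L^2} = 1$ and $\norm*{Q(\lambda_n) u_n}_{L^2} \le 2/\norm*{Q(\lambda_n)^{-1}}_{\Bop(L^2)} = o(\lambda_n)$; testing against $u_n$ gives
\[
\norm*{P^{1/2} u_n}_{L^2}^2 - \lambda_n^2 + \ii\lambda_n \inp*{b u_n}{u_n}_{L^2} = \inp*{Q(\lambda_n) u_n}{u_n}_{L^2} = o(\lambda_n) ,
\]
and the imaginary part yields~(2) via $\inp*{b u_n}{u_n}_{L^2} = o(1)$. Then $\norm*{b u_n}_{L^2} = o(1)$, so $\norm*{(P - \lambda_n^2) u_n}_{L^2} \le \norm*{Q(\lambda_n) u_n}_{L^2} + \lambda_n \norm*{b u_n}_{L^2} = o(\lambda_n)$, which is~(1). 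The main technical hurdle is the unique continuation step for $(P - \lambda_0^2) u = 0$ under $b u = 0$ with $b$ only in $L^\infty$; this can be handled via Carleman estimates or strong unique continuation \`a la Koch--Tataru.
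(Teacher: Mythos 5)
You follow essentially the same route as the paper: the Gearhart--Prüss--Huang--Greiner theorem, elimination of the second component to reduce matters to the stationary family $Q(\lambda) = P - \lambda^2 + \ii\lambda b$ (this is the paper's $P_b(\ii\lambda)$; the two-sided comparison that you assert as a ``direct computation'' is precisely what the paper imports from \cite[Lemma 4.6]{AL:14}, and only its upper bound is actually used in your argument, since your converse implication is proved by testing directly with $U_n = (u_n, \ii\lambda_n u_n)$ -- that construction is correct), and then the imaginary-part trick to obtain Item~\ref{it:asymptundamped} and deduce Item~\ref{it:quasimode}. Up to one step, this matches the paper's proof.

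The genuine gap is the step you yourself single out, and the fix you propose does not work. To exclude purely imaginary spectrum you reduce to: $u \in \dom P$, $(P - \lambda_0^2)u = 0$ and $b u = 0$ force $u = 0$. Since $b$ is only $L^\infty$ and non-negative, $b u = 0$ gives the vanishing of $u$ a.e.\ on $\{b > 0\}$, a set of positive Lebesgue measure which may have empty interior. Carleman estimates and the Koch--Tataru theorems give unique continuation from an open set, or from a point where $u$ vanishes to infinite order; neither hypothesis is available here. Moreover $u$ has only finite smoothness ($V$ is merely $L_\loc^\infty$, or $C^2$ under Assumption~\ref{assum:assumptionspotentialdampedeq}), so the iterated density-point argument yields only finite-order vanishing at points of $\{b>0\}$, and ``unique continuation from a set of positive measure'' for $-\tfrac12\Lap + V$ in dimension $d \ge 3$ is a much harder question, not covered by the references you invoke. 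The step matters: if some $\ii\lambda_0$ with $\lambda_0$ finite were an eigenvalue of $\dwop_b$, uniform stability would fail while your argument would produce no sequence $\lambda_n \to +\infty$, so without it the forward implication is incomplete. For comparison, the paper does not argue through unique continuation at all: it invokes the resolvent comparison of \cite[Lemma 4.6]{AL:14} near infinity on the imaginary axis and deals with bounded $\lambda$ by boundedness of the resolvent on compact sets, so the absence of imaginary spectrum is handled there only implicitly; your compact-resolvent reduction and the eigenvalue computation are correct and make the issue explicit, but the unique continuation input still has to be either assumed (e.g.\ $\{b>0\}$ containing an open set, or $b$ continuous) or established by a genuinely different argument.
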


\begin{proof}
%
%
By~\cite[Lemma 4.6]{AL:14}, the resolvent of $\dwop_b$ can be controlled by the resolvent of the operator
\begin{equation*}
P_b(z) = P + z^2 + z b \, \qquad \dom P_b(z) = \dom P ,
\end{equation*}
near infinity on the imaginary axis: there exist $C\ge 1$ and $\lambda_0 > 0$ such that
\begin{equation*}
\forall \lambda \in \R, \abs*{\lambda} \ge \lambda_0 , \qquad
C^{-1} \abs*{\lambda} \norm*{P_b(\ii \lambda)^{-1}}_{\Bop(L^2)} \le \norm*{(\dwop_b - \ii \lambda)^{-1}}_{\Bop(\hilbert)} \le C \left( 1 + \abs*{\lambda} \right) \norm*{P_b(\ii \lambda)^{-1}}_{\Bop(L^2)} .
\end{equation*}
In addition, since the norm of the resolvent is a continuous function, we know that $\norm*{(\dwop_b - \ii \lambda)^{-1}}_{\Bop(\hilbert)}$ is bounded for any $\lambda$ in a fixed compact set. Therefore Theorem~\ref{thm:GPHG} implies that the equation~\eqref{eq:eq} is not uniformly stable if and only if there exist a sequence $(\lambda_n)_{n \in \N}$ tending to $\pm \infty$ such that $\abs*{\lambda_n} \norm*{P_b(\ii \lambda_n)^{-1}}_{\Bop(\hilbert)} \to + \infty$ as $n \to \infty$. Without loss of generality, one can assume that $\lambda_n \to + \infty$ since $P_b(\ii \lambda)^\ast = P_b(- \ii \lambda)$ for all $\lambda \in \R$. We infer that there exists a sequence $(u_n)_{n \in \N}$ in $\dom P$ such that $\norm*{u_n}_{L^2} = 1, \forall n \in \N$ and $P_b(\ii \lambda_n) u_n = o(\lambda_n)$. Then we compute:
\begin{equation*}
o(\lambda_n)
= \Im \inp*{u_n}{P_b(\ii \lambda_n) u_n}_{L^2}
= \Im \left( \inp*{u_n}{(P - \lambda_n^2) u_n}_{L^2} + \ii \lambda_n \inp*{u_n}{b u_n}_{L^2} \right)
= \lambda_n \inp*{u_n}{b u_n}_{L^2} ,
\end{equation*}
which yields $\inp*{u_n}{b u_n}_{L^2} = \norm*{\sqrt{b} u_n}_{L^2}^2 \to 0$ in $L^2(\R^d)$. Since $b$ is $L^\infty$, we deduce that $b u_n \to 0$ in $L^2$, and from
\begin{equation*}
(P - \lambda_n^2) u_n = P_b(\ii \lambda_n) u_n - \ii \lambda_n b u_n ,
\end{equation*}
we deduce that
\begin{equation*}
\left((P - \lambda_n^2) u_n
	= o_{L^2}(\lambda_n)
		\quad \rm{and} \quad
\sqrt{b} u_n
	= o_{L^2}(1) \right)
		\qquad \Longleftrightarrow \qquad
P_b(\ii \lambda_n) u_n
	= o_{L^2}(\lambda_n)
\end{equation*}
as $n \to \infty$, which completes the proof.
\end{proof}

In view of this characterization, our goal is to study the properties of localization and oscillation of $o(\lambda_n)$-quasimodes and see under which conditions on the damping coefficient they are ``asymptotically undamped" in the sense of Item~\ref{it:asymptundamped} of Proposition~\ref{prop:quasimodes}. For simplicity, we will drop the subscript $n$ and call $o(\lambda)$-quasimode of $P$ any family of functions $(u_\lambda)_\lambda$ lying in $\dom P$ and normalized in $L^2(\R^d)$, such that
\begin{equation} \label{eq:quasimode}
(P - \lambda^2) u_\lambda = o_{L^2}(\lambda) , \qquad \lambda \to + \infty .
\end{equation}

\paragraph*{\textbf{Comments on the natural semiclassical setting for homogeneous potentials.}} Note that $o(\lambda)$-quasimodes can be recast in a semiclassical framework if the potential $V$ is homogeneous, by choosing an appropriate scaling of the phase space (see~\cite[Section 1.7]{P:23} for a similar discussion). If $V(x) = \abs*{x}^{2m}$ where $m \ge 0$ is a real number, one may introduce the unitary dilation operator $\Lambda_h$ mapping $u \in L^2(\R^d)$ to the function $x \mapsto h^{d \gamma/2} u\left(h^\gamma x\right)$ with $\gamma = 1/(m + 1)$, and then define a quantization $\widetilde{\quantization}_h$ by
\begin{equation} \label{eq:semiclassicalrescalingpot}
\widetilde{\quantization}_h(a)
	:= \Opw[1]{a\left(h^\gamma x, h^{1 - \gamma} \xi\right)}
	= \Lambda_h \Ophw{a} \Lambda_h^\ast , \qquad \forall a \in \sch(T^\star \R^d) ,
\end{equation}
where $\Ophw{a}$ denotes the standard semiclassical Weyl quantization of the symbol $a$ recalled in~\eqref{eq:semiclassicalrescalingLap}. In this framework, substituting $1/\lambda$ for $h^{1 - \gamma}$ and relabeling $u_{1/h^{1-\gamma}}$ as $u_h$, \eqref{eq:quasimode} becomes:
\begin{equation} \label{eq:sclPm}
\left(h^\frac{2m}{m+1} P - 1\right) u_h = \widetilde{\quantization}_h(p - 1) u_h = o_{L^2}(h^{1 - \gamma}) ,
	\qquad h \to 0^+ ,
\end{equation}
namely $(u_h)_{h > 0}$ is a semiclassical quasimode with precision $o(h^{1 - \gamma})$. Notice that this formulation is commonly used to study the high-frequency asymptotics for the operator $P = - \frac{1}{2} \Lap + 1$; see~\cite{AL:14,BJ:16,Zworski:book}. Since in that case the potential $V$ is bounded, it fits into the $o(\lambda)$ remainder, so that setting $h = 1/\lambda$, we obtain
\begin{equation} \label{eq:quasimodeburqjoly}
\left(- h^2 \tfrac{1}{2} \Lap - 1\right) u_h = o_{L^2}(h) .
\end{equation}
Thus we observe that the case $P =  - \frac{1}{2} \Lap + 1$ yields ``good quasimodes", since a precision $o(h)$ allows to provides a fairly accurate description of the asymptotic behavior of $u_h$ as $h \to 0$ (Proposition~\ref{prop:suppflowinvSDM}). Quasimodes of the form~\eqref{eq:quasimodeburqjoly} are related to~\eqref{eq:GCC}, since the Hamiltonian flow associated with the Laplacian is the geodesic flow (here straight lines). In comparison, in the presence of a confining potential, quasimodes are coarser since their precision is $o(h^{1 - \gamma})$ with $\gamma \in (0, 1)$. This phenomenon highlights one of the difficulties of the study of stabilization in the presence of a confining potential: the asymptotic invariance properties of $u_\lambda$ as $\lambda \to \infty$ will be slightly more difficult to exhibit than in the ``free case" $V(x) = 1$ where the dynamics is driven by the Laplacian only. Sharp concentration properties of quasimodes appear at \emph{second-microlocal} scales.


\section{A priori conditions of stabilization: Proof of Propositions~\ref{prop:aprioriNC} and~\ref{prop:aprioriSC}} \label{sec:aprioriNCSC}

This section is devoted to the proof of Propositions~\ref{prop:aprioriNC} and~\ref{prop:aprioriSC}. In Subsection~\ref{subsec:beamtype}, we give a quasimode construction which allows to prove the necessary condition of Prooposition~\ref{prop:aprioriNC}. Then in Subsection~\ref{subsec:escapeofmass}, we show that the energy of quasimodes escapes from any compact set and we prove the sufficient condition of Proposition~\ref{prop:aprioriSC}.

\subsection{A priori necessary condition} \label{subsec:beamtype}

In order to investigate the necessary condition of uniform stability, we construct {$o(\lambda)$-quasimodes} for $P$ by constructing quasi-modes for the Laplacian, whose potential energy fits in the remainder term. This is done by using wave-packets concentrating along straight lines, giving rise to the necessity of~\eqref{eq:GCC}.

\begin{proposition}[Construction of kinetic quasimodes] \label{prop:constructionbeam}
Let $P$ be the operator defined in~\eqref{eq:defP} with a potential $V$ subject to~\eqref{eq:assumV}. Let $(x_n, \nu_n)_{n \in \N}$ be a sequence in $S \R^d$, and let $(t_n)_{n \in \N}$ and $(r_n)_{n \in \N}$ be sequences of real positive numbers such that $t_n \to + \infty$ and $r_n \to 0$ as $n \to \infty$. Define a sequence $(\lambda_n)_{n \in \N}$ by
\begin{equation*}
\lambda_n = \max\left\{ \dfrac{(n + 1)^2}{r_n^2} , n \norm*{V}_{L^\infty\left(B_{t_n}(x_n)\right)} \right\} , \qquad \forall n \in \N .
\end{equation*}
Note that $\lambda_n \to + \infty$ as $n \to \infty$. Set $\rho_n = (x_n, \lambda_n \nu_n)$, and denote by $\Sigma_n$ the automorphism of $\R^d$ defined by
\begin{equation*}
\Sigma_n = t_n \ket{\nu_n} \bra{\nu_n} + \dfrac{n + 1}{\sqrt{\lambda_n}} \left( \id - \ket{\nu_n} \bra{\nu_n} \right) ,
\end{equation*}
where $\ket{\nu_n} \bra{\nu_n}$ (resp. $\id - \ket{\nu_n} \bra{\nu_n}$) is the orthogonal projection onto $\Span \nu_n$ (resp. $\Span \nu_n{}^\perp$). Further denote by $M_n$ the unitary dilation operator acting on $L^2(\R^d)$ as $(M_n f)(x) = \abs*{\det \Sigma_n}^{-1/2} f(\Sigma_n^{-1} x)$, and recall that $T_{\rho_n}$ is the phase-space translation operator~\eqref{eq:deftranslation} associated with $\rho_n$. Then if we fix $k \in \cont_\comp^\infty(\R^d)$ satisfying $\norm*{k}_{L^2} = 1$ and $\supp k \subset B_1(0)$, the family of functions
\begin{equation*}
u_n := T_{\rho_n} M_n k , \qquad n \in \N ,
\end{equation*}
forms a $o(\lambda_n)$-quasimode of $P$, that is $\norm*{u_n}_{L^2} = 1, \forall n \in \N$ and $(P - \lambda_n^2) u_n = o_{L^2}(\lambda_n)$ as $n \to \infty$.
\end{proposition}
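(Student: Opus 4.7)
The plan is to verify the two required properties of $u_n := T_{\rho_n} M_n k$: normalization in $L^2$ and the quasimode estimate $(P - \lambda_n^2) u_n = o_{L^2}(\lambda_n)$. Normalization will be immediate, since $M_n$ is unitary by definition, $T_{\rho_n}$ is unitary as the composition of a translation and of multiplication by a unit-modulus function, and $\|k\|_{L^2} = 1$.

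For the remainder, I would write $g_n := M_n k$ and factor $u_n(x) = c_n\, \e^{\ii\, \xi_n \cdot x}\, g_n(x - x_n)$, with $\xi_n \propto \lambda_n \nu_n$ and $|c_n| = 1$, the proportionality constant being fixed so that the kinetic energy $\tfrac{1}{2}|\xi_n|^2$ sits exactly on the energy shell $\lambda_n^2$. A direct differentiation of $P = V - \tfrac{1}{2}\Lap$ then yields
\begin{equation*}
(P - \lambda_n^2) u_n = V(x)\, u_n(x) - c_n\, \e^{\ii\, \xi_n \cdot x} \bigl( \ii\, \xi_n \cdot \nabla g_n + \tfrac{1}{2}\Lap g_n \bigr) ,
\end{equation*}
so it remains to bound three $L^2$ contributions, each driven by a distinct scale of the construction and each expected to be $o(\lambda_n)$: the potential term $V u_n$, the transport term $\xi_n \cdot \nabla g_n$, and the dispersive term $\Lap g_n$.

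For the potential term, $\mathrm{supp}\, g_n$ is the ellipsoid $\Sigma_n B_1(0)$, of axis $t_n$ along $\nu_n$ and $(n+1)/\sqrt{\lambda_n}$ transversely; since the latter is eventually smaller than $t_n$, we get $\mathrm{supp}\, u_n \subset B_{t_n}(x_n)$, and the defining inequality $\lambda_n \ge n\, \|V\|_{L^\infty(B_{t_n}(x_n))}$ yields $\|V u_n\|_{L^2} \le \lambda_n/n$. For the transport term, $\Sigma_n \nu_n = t_n \nu_n$ together with the change of variables $\Sigma_n^{-1}$ gives $\|\xi_n \cdot \nabla g_n\|_{L^2} = (|\xi_n|/t_n)\, \|\nu_n \cdot \nabla k\|_{L^2} = O(\lambda_n/t_n)$. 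For the dispersive term, splitting $\Lap = \partial_{\nu_n}^2 + \Lap_\perp$ and tracking the anisotropic action of $\Sigma_n^{-1}$ leads to
\begin{equation*}
\|\Lap g_n\|_{L^2} \le t_n^{-2}\, \|\partial_{\nu_n}^2 k\|_{L^2} + \lambda_n (n+1)^{-2}\, \|\Lap_\perp k\|_{L^2} = O\!\bigl(\lambda_n / (n+1)^2\bigr) ,
\end{equation*}
for $n$ large, the dominant contribution coming from the transverse Laplacian. Since $n, t_n, \lambda_n \to +\infty$, all three bounds are $o(\lambda_n)$, which proves the claim.

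Conceptually, the construction is a compactly supported wave-packet concentrated on a tube of length $t_n$ and transverse width $(n+1)/\sqrt{\lambda_n} \le r_n$ around the segment $\{x_n + t \nu_n : |t| < t_n\}$. The three scales are calibrated so that the potential, transport, and dispersive contributions each fit into the $o(\lambda_n)$ remainder, the critical transverse width $\sim 1/\sqrt{\lambda_n}$ being the semiclassical concentration scale discussed in Subsection~\ref{subsec:scales}. The only mildly technical step is to perform the Laplacian computation with the correct anisotropic scaling of $\Sigma_n$; otherwise the argument is entirely elementary.
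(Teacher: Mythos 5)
Your proposal is correct and follows essentially the same route as the paper's proof: the potential term is handled identically via $\mathrm{supp}\,u_n \subset B_{t_n}(x_n)$ and $\lambda_n \ge n\|V\|_{L^\infty(B_{t_n}(x_n))}$, and your transport and dispersive bounds $O(\lambda_n/t_n)$ and $O(\lambda_n/(n+1)^2)$ are exactly the two terms the paper obtains, the only difference being that you compute by the Leibniz rule in physical space whereas the paper conjugates by $M_n$ and $T_{\rho_n}$ on the Fourier side and uses the identity $|\lambda_n\nu_n+\Sigma_n^{-1}\xi|^2-\lambda_n^2 = 2\lambda_n\Sigma_n^{-1}\nu_n\cdot\xi+|\Sigma_n^{-1}\xi|^2$ — a purely cosmetic distinction. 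One remark: your choice of $\xi_n$ with $\tfrac{1}{2}|\xi_n|^2=\lambda_n^2$ (i.e.\ $\xi_n=\sqrt{2}\,\lambda_n\nu_n$) silently departs from the stated $\rho_n=(x_n,\lambda_n\nu_n)$; this is in fact the choice consistent with $P=V-\tfrac{1}{2}\Delta$, and the paper's own proof elides the same factor by estimating $|\xi|^2-\lambda_n^2$ rather than $\tfrac{1}{2}|\xi|^2-\lambda_n^2$, so this is a harmless normalization discrepancy (irrelevant to the use of the proposition, where only the support, the sup-norm of $u_n$ and $\lambda_n\to+\infty$ matter), but it is worth stating explicitly that you are adjusting the momentum rather than proving the literal statement.
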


\begin{proof}
First note that since $T_{\rho_n}$ and $M_n$ are unitary, the $u_n$'s are normalized in $L^2$, and they are also $\cont_\comp^\infty$, so that they belong to the domain of $P$. In the proof, we will use the following observation:
\begin{equation} \label{eq:normSigman}
\norm*{\Sigma_n}_{\Bop(\R^d)} \le t_n \qquad \rm{and} \qquad \norm*{\Sigma_n^{-1}}_{\Bop(\R^d)} \le \dfrac{\sqrt{\lambda_n}}{n + 1} = o(\sqrt{\lambda_n}) \qquad \textrm{for $n$ large enough} .
\end{equation}
It essentially comes from the assumption $t_n \to + \infty$ and the fact that $(n + 1)/\sqrt{\lambda_n} \le r_n \to 0$ as $n \to \infty$.

Then notice that the potential term $V u_n$ is negligible. Indeed, the operators $T_{\rho_n}$ and $M_n$ being unitary, we have
\begin{align*}
\norm*{V u_n}_{L^2}
	&= \norm*{M_n^\ast T_{\rho_n}^\ast V u_n}_{L^2}
	= \norm*{M_n^\ast T_{\rho_n}^\ast V T_{\rho_n} M_n k}_{L^2}
	= \norm*{V(x_n + \Sigma_n x) k(x)}_{L_x^2} \\
	&\le \norm*{V}_{L^\infty(x_n + \Sigma_n \supp k)} \norm*{k}_{L^2} .
\end{align*}
Yet $\supp k \subset B_1(0)$, so that using~\eqref{eq:normSigman}, $x_n + \Sigma_n \supp k \subset B_{t_n}(x_n)$. It follows that
\begin{equation} \label{eq:Vun}
\norm*{V u_n}_{L^2}
	\le \norm*{V}_{L^\infty\left(B_{t_n}(x_n)\right)}
	\le \lambda_n/n
	= o(\lambda_n)
\end{equation}
by definition of $\lambda_n$. Thus in order to conclude the proof of the proposition, we would like to prove that $(u_n)_{n \in \N}$ is a quasimode of the Laplacian. To do this, it will be more convenient to proceed on the Fourier side, and prove that $(\abs*{\xi}^2 - \lambda_n^2) \ft u_n = o_{L^2}(\lambda_n)$. Recall from~\eqref{eq:intertwiningr} that the translation operators enjoy the intertwining relations $\ft T_{\rho_n} = T_{J \rho_n} \ft$, and also that for dilation operators $\ft M_n = M_n^\ast \ft$, so that $\ft u_n = T_{J \rho_n} M_n^\ast \ft k$. Accordingly,
\begin{align*}
M_n T_{J \rho_n}^\ast \left( \abs*{\xi}^2 - \lambda_n^2 \right) \ft u_n(\xi)
	&= M_n T_{J \rho_n}^\ast \left( \abs*{\xi}^2 - \lambda_n^2 \right) T_{J \rho_n} M_n^\ast \ft k(\xi)
	= \left( \abs*{\lambda_n \nu_n + \Sigma_n^{-1} \xi}^2 - \lambda_n^2 \right) \ft k(\xi) \\
	&= \left( 2 \lambda_n \Sigma_n^{-1} \nu_n \cdot \xi  + \abs*{\Sigma_n^{-1} \xi}^2 \right) \ft k(\xi) ,
\end{align*}
where we used the fact that $\Sigma_n^{-1}$ is symmetric for the last equality. Finally, by definition of $\Sigma_n$, it holds $\Sigma_n^{-1} \nu_n = \nu_n/t_n = o(1)$, and as we have seen in~\eqref{eq:normSigman}, we have $\norm*{\Sigma_n^{-1}}_{\Bop(\R^d)} = o(\sqrt{\lambda_n})$. Therefore we obtain
\begin{equation*}
\norm*{M_n T_{J \rho_n}^\ast \left( \abs*{\xi}^2 - \lambda_n^2 \right) \ft u_n}_{L^2}
	\le  2 \lambda_n \abs*{\Sigma_n^{-1} \nu_n} \norm*{\xi \ft k}_{L^2} + \norm*{\Sigma_n^{-1}}_{\Bop(\R^d)}^2 \norm*{\abs*{\xi}^2 \ft k}_{L^2}
	= o(\lambda_n) .
\end{equation*}
Since, again, the translation and dilation operators are unitary, we conclude that $(\abs*{\xi}^2 - \lambda_n^2) \ft u_n = o_{L^2}(\lambda_n)$. The Plancherel theorem together with~\eqref{eq:Vun} finish the proof.
\end{proof}

We are now ready to prove that~\eqref{eq:GCC} is necessary for stabilization.

\begin{proof}[Proof of Proposition~\ref{prop:aprioriNC}]
We argue by contradiction: assume~\eqref{eq:GCC} is not true. Since~\eqref{eq:GCC} is equivalent to~\eqref{eq:GCCnear0} in view of Remark~\ref{rmk:GCCnear0}, we deduce that for any $n \in \N^\star$, there exist $(x_n, \nu_n) \in S \R^d$ and 
\begin{equation} \label{eq:conditionrn}
0 < r_n \le \min\left\{ \dfrac{1}{n}, \sqrt{\dfrac{n}{\norm*{V}_{L^\infty\left(B_{n}(x_n)\right)}}} \right\}
\end{equation}
satisfying
\begin{equation} \label{eq:avglessthan2-n}
\fint_{-n}^n (b \ast \kappa_{r_n}) (x_n + t \nu_n) \dd t \le 2^{-n} .
\end{equation}
We will construct a $o(\lambda)$-quasimode of $P$ concentrating around the rays $\set{x_n + t \nu_n}{t \in [-n, n]}$ which is not asymptotically undamped in the sense of Item~\ref{it:asymptundamped} of Proposition~\ref{prop:quasimodes}. In $\R^d$, we define the cylinder with axis $\R \nu_n$, center $x_n$, length $n$ and radius $\sqrt{3} r_n/2$ by:
\begin{equation*}
\scr{C}_n = \set{y \in \R^d}{\dist(y - x_n, \R \nu_n) \le \dfrac{\sqrt{3}}{2} r_n, \abs*{(y - x_n) \cdot \nu_n} \le \dfrac{n}{2}} ,
\end{equation*}
as well as the (time) interval
\begin{equation*}
I_n(y) = \left[ (y - x_n) \cdot \nu_n - \dfrac{r_n}{2}, (y - x_n) \cdot \nu_n + \dfrac{r_n}{2} \right] \subset \R ,
\end{equation*}
which depends on $y \in \R^d$. When $y \in \scr{C}_n$, we observe that $I_n(y) \subset [-n, n]$ (recall that $r_n \le 1/n \le n$ for any $n \in \N^\star$). In addition, if $y \in \scr{C}_n$ and $t \in I_n(y)$, using the Pythagorean Theorem, it holds
\begin{align*}
\abs*{x_n - y + t \nu_n}^2
	&= \abs*{\left((x_n - y) \cdot \nu_n\right) \nu_n + t \nu_n}^2 + \abs*{x_n - y - \left((x_n - y) \cdot \nu_n\right) \nu_n}^2 \\
	&= \abs*{t - (y - x_n) \cdot \nu_n}^2 + \dist(y - x_n, \R \nu_n)^2
	\le \left(\dfrac{r_n}{2}\right)^2 + \left(\dfrac{\sqrt{3}}{2} r_n\right)^2
	= r_n^2 ,
\end{align*}
which can be reformulated as
\begin{equation} \label{eq:supptent}
y \in \scr{C}_n, t \in I_n(y) \qquad \Longrightarrow \qquad \kappa_{r_n}(x_n - y + t \nu_n) = \dfrac{1}{r_n^d \abs{B_1(0)}} ,
\end{equation}
where $\kappa_r$ is introduced in~\eqref{eq:defkappa}.
Now from~\eqref{eq:avglessthan2-n} and the definition of the convolution, we have
\begin{align*}
2^{-n}
	&\ge \fint_{-n}^n (b \ast \kappa_{r_n}) (x_n + t \nu_n) \dd t
	= \int_{\R^d} b(y) \fint_{-n}^n \kappa_{r_n}(x_n - y + t \nu_n) \dd t \dd y \\
	&\ge \int_{\scr{C}_n} b(y) \dfrac{1}{2n} \int_{I_n(y)} \kappa_{r_n}(x_n - y + t \nu_n) \dd t \dd y .
\end{align*}
Then by~\eqref{eq:supptent} we obtain
\begin{equation*}
2^{-n}	
	\ge \dfrac{1}{2 n r_n^{d-1} \abs{B_1(0)}} \int_{\scr{C}_n} b(y) \dd y
\end{equation*}
(recall that $I_n(y)$ has length $r_n$),
which yields
\begin{equation*}
\int_{\scr{C}_n} b(y) \dd y \le 2 \abs{B_1(0)} n r_n^{d-1} 2^{-n} , \qquad \forall n \in \N^\star .
\end{equation*}
Now we consider $(u_n)_{n \in \N}$ the quasimode of $P$ provided by Proposition~\ref{prop:constructionbeam} (with $t_n = n/2$ and $r_n/2$ in place of $r_n$). It comes together with a sequence $(\lambda_n)_{n \in \N}$ so that $(P - \lambda_n^2) u_n = o_{L^2}(\lambda_n)$. Thanks to~\eqref{eq:conditionrn}, the definition of $\lambda_n$ gives
\begin{equation*}
\lambda_n = \left( \dfrac{n + 1}{r_n} \right)^2 , \qquad \forall n \in \N ,
\end{equation*}
and recalling the definition of $u_n$, we see that
\begin{equation*}
\supp u_n \subset \scr{C}_n \qquad \rm{and} \qquad \norm*{u_n}_\infty = \dfrac{\norm*{k}_\infty}{\sqrt{t_n}} r_n^{-\frac{d-1}{2}} .
\end{equation*}
We conclude that
\begin{align*}
\abs*{\inp*{u_n}{b u_n}_{L^2}}
	&= \int_{\scr{C}_n} b(y) \abs*{u_n(y)}^2 \dd y
	\le \norm*{u_n}_\infty^2 \int_{\scr{C}_n} b(y) \dd y
	\le \dfrac{\norm*{k}_\infty^2}{t_n} \times 2 \abs{B_1(0)} n 2^{-n} \\
	&= 4 \norm*{k}_\infty^2 \abs{B_1(0)} 2^{-n}
	\strongto{n \to \infty} 0 .
\end{align*}
Therefore by Proposition~\ref{prop:quasimodes}, \eqref{eq:eq} is not uniformly stable, which is the sought result.
\end{proof}

\subsection{A priori sufficient condition} \label{subsec:escapeofmass}

We now prepare for the proof of the a priori sufficient condition, which involves semiclassical defect measures associated with quasimodes of the Laplacian. We recall that $\phi^t(x, \xi) = (x + t \xi, \xi)$ is the Hamiltonian flow associated with the symbol $(x, \xi) \mapsto \frac{1}{2} \abs{\xi}^2$.

\begin{lemma} \label{lem:mu=0Lap}
Let $\mu$ be a nonnegative Radon measure on $T^\star \R^d$ having finite mass and satisfying
\begin{equation*}
\supp \mu \subset \left\{ \frac{1}{2} \abs*{\xi}^2 = 1 \right\} \qquad \rm{and} \qquad (\phi^t)_\ast \mu = \mu , \; \forall t \in \R .
\end{equation*}
Then $\mu = 0$.
\end{lemma}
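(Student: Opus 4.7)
The plan is to exploit the fact that the flow $\phi^t$ translates the base variable by $t\xi$, combined with the two hypotheses on $\mu$: the support condition forces $|\xi| = \sqrt{2}$ on $\supp \mu$, so trajectories in the base are genuine straight lines with speed bounded away from zero; and flow-invariance lets us time-average any test function against $\mu$. Finite total mass then lets dominated convergence deliver $\int a\,d\mu = 0$ for every compactly supported test function.

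Concretely, I would fix $a \in \cont_\comp^0(T^\star \R^d)$ with $\supp a \subset K$ for some compact $K \subset T^\star \R^d$. By flow-invariance of $\mu$, for every $T>0$,
\begin{equation*}
\int_{T^\star \R^d} a\,\dd \mu \;=\; \int_{T^\star \R^d} \left(\frac{1}{T}\int_0^T a\circ \phi^t\,\dd t\right) \dd\mu(x,\xi) .
\end{equation*}
Next I would observe that on $\supp\mu$, the map $t \mapsto (x + t\xi, \xi)$ has speed $|\xi| = \sqrt 2$. Consequently, the set of $t \in \R$ for which $(x+t\xi,\xi) \in K$ is contained in an interval of length at most $L := \mathrm{diam}(\pi(K))/\sqrt 2$, where $\pi$ is the projection to the base. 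Therefore
\begin{equation*}
\left|\int_0^T a\circ \phi^t(x,\xi)\,\dd t\right| \;\le\; L\,\norm{a}_\infty
\qquad \text{for every } (x,\xi)\in \supp \mu \text{ and every } T>0 ,
\end{equation*}
which means the time average $\frac{1}{T}\int_0^T a\circ \phi^t\,\dd t$ tends to $0$ pointwise on $\supp\mu$ as $T\to \infty$, while being uniformly bounded by $\norm{a}_\infty$.

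Since $\mu$ has finite total mass, the constant function $\norm{a}_\infty$ is $\mu$-integrable, so the dominated convergence theorem yields $\int a\,\dd\mu = 0$. As $a \in \cont_\comp^0(T^\star \R^d)$ was arbitrary, $\mu = 0$ by the Riesz representation theorem. The only substantive point is the uniform bound in the second displayed equation, which requires the support condition $\{|\xi| = \sqrt 2\}$: it is precisely the non-vanishing of $\xi$ on $\supp\mu$ that prevents trajectories from stagnating inside $K$ and makes the time average negligible. There is no real obstacle beyond correctly packaging this escape-of-mass along straight lines.
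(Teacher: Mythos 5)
Your proof is correct and follows essentially the same route as the paper: average $a$ along the flow using invariance, use that $\abs{\xi}=\sqrt{2}$ on $\supp\mu$ to bound the time any straight-line trajectory spends in the (compact) support of $a$, and let $T\to\infty$ using the finite mass of $\mu$. The paper merely packages the localization to $\supp\mu$ via a cutoff $\chi(\abs{\xi})$ and an explicit estimate on the admissible times, whereas you work directly on $\supp\mu$; both are fine.
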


\begin{proof}
Fix $a \in \cont_\comp^\infty(T^\star \R^d)$, and let us show that $\int_{T^\star \R^d} a \dd \mu = 0$. Let $\chi \in \cont_\comp^\infty(\R_+^\star)$ satisfy $\chi(s) = 1$ in a neighborhood of $\sqrt{2}$, in order that $\chi(\abs*{\xi}) = 1$ on the support of $\mu$. Then we have
\begin{equation*}
\int_{T^\star \R^d} a \dd \mu
	= \int_{T^\star \R^d} a(x, \xi) \chi(\abs*{\xi}) \dd \mu(x, \xi)
	= \int_{T^\star \R^d} a(x + t \xi, \xi) \chi(\abs*{\xi}) \dd \mu(x, \xi) ,
\end{equation*}
where we used the flow invariance of $\mu$ and $\abs{\xi}$ in the last equality, for any $t \in \R$. By averaging this on $[-T, T]$, for some $T > 0$, we obtain
\begin{equation} \label{eq:intdomain}
\int_{T^\star \R^d} a \dd \mu
	= \int_{T^\star \R^d} \left( \fint_{-T}^T a(x + t \xi, \xi) \dd t \right)\chi(\abs*{\xi}) \dd \mu(x, \xi) .
\end{equation}
Since $a$ is compactly supported, there exists $R > 0$ such that $\abs*{x} \ge R$ implies $a(x, \xi) = 0$ for all $\xi \in (\R^d)^\star$. Also introduce $r := \inf \supp \chi > 0$. Then one can restrict the integration domain in~\eqref{eq:intdomain} to the set of $(x, \xi, t) \in T^\star \R^d \times \R$ such that
\begin{equation*}
\abs*{\xi} \ge r
	\qquad \rm{and} \qquad
\abs*{x + t \xi} \le R .
\end{equation*}
Thanks to the reverse triangle inequality, we have for such $(x, \xi, t)$:
\begin{equation} \label{eq:constraintt}
\abs*{\abs*{t} - \dfrac{\abs*{x}}{\abs*{\xi}}}
	= \dfrac{1}{\abs{\xi}} \Bigl|\abs*{t \xi} - \abs*{x}\Bigr|
	\le \dfrac{\abs{t \xi + x}}{\abs{\xi}}
	\le \dfrac{R}{r} .
\end{equation}
We deduce that for fixed $(x, \xi)$, the Lebesgue measure of the set of $t$'s satisfying the constraint~\eqref{eq:constraintt} is less than $2 R/r$. Therefore
\begin{equation*}
\abs*{\int_{T^\star \R^d} a \dd \mu}
	\le \dfrac{1}{2T} \int_{T^\star \R^d} 2 \dfrac{R}{r} \norm*{a}_\infty \abs*{\chi(\abs*{\xi})} \dd \mu(x, \xi)
	\le \dfrac{R}{Tr} \norm*{a}_\infty \norm*{\chi}_\infty \mu(T^\star \R^d) .
\end{equation*}
The constants $R$ and $r$ depend only on $a$ and $\chi$ respectively, so we obtain the desired conclusion by letting $T \to \infty$.
\end{proof}

The next step towards the sufficient condition is to prove that the energy of a semiclassical $o(h)$-quasimode of the Laplacian always escapes from any compact set in the limit $h \to 0$.

\begin{proposition} \label{prop:escapeLap}
Let $(u_h)_{h > 0}$ be a bounded family in $L^2(\R^d)$ satisfying $(- \frac{h^2}{2} \Lap - 1) u_h = o_{L^2}(h)$. Then
\begin{equation*}
\int_{\R^d} a \abs*{u_h}^2 \dd x \strongto{h \to 0} 0 , \qquad \forall a \in \cont_\comp^\infty(\R^d) .
\end{equation*}
\end{proposition}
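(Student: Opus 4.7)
The plan is to argue by contradiction using the framework of semiclassical defect measures, appealing to Proposition~\ref{prop:suppflowinvSDM} and Lemma~\ref{lem:mu=0Lap}. Suppose the conclusion fails: then there exist $a \in \cont_\comp^\infty(\R^d)$, a constant $\varepsilon > 0$ and a sequence $h_n \to 0^+$ such that $\abs{\int_{\R^d} a \abs{u_{h_n}}^2 \dd x} \ge \varepsilon$ for every $n$. Since the family $(u_{h_n})$ is bounded in $L^2$, up to a further extraction we may assume that it admits a semiclassical defect measure $\mu$ on $T^\star \R^d$. The assumption $(-\tfrac{h^2}{2} \Lap - 1) u_h = o_{L^2}(h)$ is precisely the one required by Proposition~\ref{prop:suppflowinvSDM}, so $\supp \mu \subset \{\tfrac{1}{2} \abs{\xi}^2 = 1\}$ and $(\phi^t)_\ast \mu = \mu$ for all $t \in \R$. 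Lemma~\ref{lem:mu=0Lap} then forces $\mu = 0$.

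The substantive point is that a symbol of the form $a(x)$ is not in $\cont_\comp^\infty(T^\star \R^d)$, so the defect measure convergence does not apply to it directly. To bypass this, I would introduce a frequency cutoff $\psi \in \cont_\comp^\infty(\R^d)$ equal to $1$ on $\{\abs{\xi} \le \sqrt{3}\}$ and show that the mass of $u_h$ concentrates on frequencies of size $O(1)$. Since $(1 - \psi)(\xi)/(\tfrac{1}{2} \abs{\xi}^2 - 1)$ is a bounded smooth function on $\R^d$, functional calculus yields
\begin{equation*}
(1 - \psi)(hD) u_h
	= \frac{(1 - \psi)(hD)}{\tfrac{1}{2} \abs{hD}^2 - 1} \left( \tfrac{h^2}{2} \Lap + 1 \right) u_h \cdot (-1)
	= o_{L^2}(h) .
\end{equation*}
Hence $u_h = \psi(hD) u_h + o_{L^2}(h)$, and consequently
\begin{equation*}
\int_{\R^d} a \abs{u_h}^2 \dd x
	= \inp*{u_h}{\psi(hD) \, a \, \psi(hD) u_h}_{L^2} + o(1) .
\end{equation*}
The operator $\psi(hD) \, a \, \psi(hD)$ is (up to an $O(h)$ remainder in $\Bop(L^2)$) a semiclassical pseudo-differential operator with symbol $a(x) \psi(\xi)^2 \in \cont_\comp^\infty(T^\star \R^d)$ — this is the content of the composition theorem of pseudo-differential calculus (Theorem~\ref{thm:pseudodiffcalc} in Appendix~\ref{app:pseudo}). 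Applying the definition of the defect measure $\mu$ to this compactly supported symbol gives
\begin{equation*}
\inp*{u_{h_n}}{\Opw[h_n]{a \psi^2} u_{h_n}}_{L^2}
	\strongto{n \to \infty} \int_{T^\star \R^d} a(x) \psi(\xi)^2 \dd \mu(x, \xi) = 0 ,
\end{equation*}
contradicting the lower bound $\varepsilon$ and concluding the proof.

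The only mildly delicate step is the frequency localization $u_h = \psi(hD) u_h + o_{L^2}(h)$, which is completely standard once one notices that the characteristic set $\{\tfrac{1}{2} \abs{\xi}^2 = 1\}$ is compact in $\xi$; the rest of the argument is a textbook application of the defect measure machinery combined with Lemma~\ref{lem:mu=0Lap}, which does the real geometric work (exploiting that the rays of $\phi^t$ escape every compact in $x$ on the energy shell $\abs{\xi} = \sqrt{2}$).
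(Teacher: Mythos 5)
Your proof is correct and takes essentially the same route as the paper: both reduce to a compactly supported symbol via a frequency cutoff supported near $\{\frac{1}{2}|\xi|^2 = 1\}$, justified by elliptic regularity, and then invoke the defect-measure machinery together with Lemma~\ref{lem:mu=0Lap}. The only cosmetic difference is that you cut $u_h = \psi(hD)u_h + o_{L^2}(h)$ and then compose $\psi(hD)\,a\,\psi(hD)$ by pseudo-differential calculus, whereas the paper splits the symbol as $a = \chi a + (1-\chi)a$ and factors $\Opw[h]{(1-\chi)a}$ through the elliptic operator — two equivalent ways of packaging the same elliptic-estimate-plus-composition argument.
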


\begin{proof}
Thanks to Proposition~\ref{prop:suppflowinvSDM}, we know that any semiclassical defect measure associated with $(u_h)_{h > 0}$ satisfies the hypotheses of Lemma~\ref{lem:mu=0Lap}, and therefore is the zero measure. This means that
\begin{equation} \label{eq:wignerdistrtendtozero}
\inp*{u_h}{\Ophw{a} u_h}_{L^2} \strongto{h \to 0} 0 , \qquad \forall a \in \cont_\comp^\infty(T^\star \R^d)
\end{equation}
(notice that this limit holds for the whole family $(u_h)_{h > 0}$, and not only along a subsequence, by uniqueness of the semiclassical defect measure). Now fix $a \in \cont_\comp^\infty(\R^d)$. To finish the proof, we will use a cut-off function in the $\xi$ variable in order to reduce to a compactly supported symbol on $T^\star \R^d$ and be able to apply~\eqref{eq:wignerdistrtendtozero}. Let $\chi \in \cont_\comp^\infty(\R_+^\star)$ satisfy $\chi(s) = 1$ in a neighborhood of $\sqrt{2}$, so that $\chi(\abs{\xi})$ has compact support in $\xi$ and equals~$1$ in a neighborhood of $\{ \frac{1}{2} \abs*{\xi}^2 = 1 \}$. Then we have
\begin{equation*}
\Ophw{(1 - \chi) a} u_h
	= \Ophw{\dfrac{(1 - \chi) a}{\frac{1}{2} \abs*{\xi}^2 - 1}} \Ophw{\frac{1}{2} \abs*{\xi}^2 - 1} u_h + O_{L^2}(h)
	= o_{L^2}(h) + O_{L^2}(h)
	= o_{L^2}(1) .
\end{equation*}
For the first equality, we have used the pseudo-differential calculus (Theorem~\ref{thm:pseudodiffcalc}) with $a_1 = (1 - \chi) a/(\frac{1}{2} \abs*{\xi}^2 - 1)$ which is a well-defined symbol in $S(\jap{\xi}^{-2})$ since $1 - \chi$ vanishes in a neighborhood of $\{\frac{1}{2} \abs*{\xi}^2 = 1\}$, and $a_2 = \frac{1}{2} \abs*{\xi}^2 - 1 \in S(\jap{\xi}^2)$. Then we have used the assumption on $(u_h)_{h > 0}$ together with the Calder\'{o}n--Vaillancourt Theorem (Theorem~\ref{thm:CV}) to get the second equality. Finally, combining this with~\eqref{eq:wignerdistrtendtozero}, we obtain
\begin{equation*}
\int_{\R^d} a \abs*{u_h}^2 \dd x
	= \inp*{u_h}{\Ophw{\chi a} u_h}_{L^2} + \inp*{u_h}{\Ophw{(1 - \chi) a} u_h}_{L^2}
	\strongto{h \to 0} 0 ,
\end{equation*}
which is the sought result.
\end{proof}

In the following proposition, we show that the escape at infinity of the energy still occurs for any $o(\lambda)$-quasimode of $P$, which will pave the way for the subsequent a priori sufficient condition. The basic idea is to turn a given quasimode of $P$ into a semiclassical quasimode of the Laplacian by means of a suitable space cut-off.

\begin{proposition}[Escape from any compact set] \label{prop:escape}
Let $P$ be the operator defined in~\eqref{eq:defP} with a potential $V$ subject to~\eqref{eq:assumV}. Let $(u_\lambda)_\lambda$ be a $o(\lambda)$-quasimode of $P$. Then it holds:
\begin{equation*}
\forall a \in \cont_\comp^\infty(\R^d), \qquad \int_{\R^d} a(x) \abs*{u_\lambda(x)}^2 \dd x \strongto{\lambda \to + \infty} 0 .
\end{equation*}
\end{proposition}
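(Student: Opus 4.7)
The plan is to turn the quasimode $u_\lambda$ of $P$ into a semiclassical $o(h)$-quasimode of the flat Laplacian (with $h = 1/\lambda$) through multiplication by a spatial cutoff, and then invoke Proposition~\ref{prop:escapeLap}. A naive attempt with a \emph{fixed} cutoff $\chi \in \cont_\comp^\infty(\R^d)$ equal to $1$ on $\supp a$ is doomed to fail: setting $v_\lambda := \chi u_\lambda$, a direct computation gives
\begin{equation*}
\left(-\tfrac{1}{2}\Lap - \lambda^2\right) v_\lambda = \chi (P - \lambda^2) u_\lambda - \nabla\chi \cdot \nabla u_\lambda - \tfrac{1}{2}(\Lap \chi) u_\lambda - V \chi u_\lambda ,
\end{equation*}
and after dividing by $\lambda^2$ the commutator term $\lambda^{-2} \nabla\chi \cdot \nabla u_\lambda$ is only $O_{L^2}(h)$, not $o_{L^2}(h)$. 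This will be the main obstacle, and I plan to overcome it by letting the cutoff spread out as $\lambda \to + \infty$.

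The first ingredient I will need is a quantitative bound on $\norm*{\nabla u_\lambda}_{L^2}$. Pairing $(P - \lambda^2) u_\lambda = o_{L^2}(\lambda)$ with $u_\lambda$ and using $V \ge 0$ yields $\tfrac{1}{2} \norm*{\nabla u_\lambda}_{L^2}^2 + \int V \abs*{u_\lambda}^2 \dd x = \lambda^2 + o(\lambda)$, and hence $\norm*{\nabla u_\lambda}_{L^2} = O(\lambda)$.

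I then fix $\chi_0 \in \cont_\comp^\infty(\R^d)$ with $\chi_0 \equiv 1$ on $B_1(0)$ and $\supp \chi_0 \subset B_2(0)$, and set $\chi_R(x) := \chi_0(x/R)$, so that $\norm*{\nabla \chi_R}_{L^\infty} = O(1/R)$ and $\norm*{\Lap \chi_R}_{L^\infty} = O(1/R^2)$. Since $V$ is locally bounded by~\eqref{eq:assumV}, one can choose $R = R(\lambda) \to + \infty$ as $\lambda \to + \infty$ slowly enough that $\norm*{V}_{L^\infty(B_{2 R(\lambda)})} = o(\lambda)$; for instance, $R(\lambda) := \sup \set{R > 0}{\norm*{V}_{L^\infty(B_{2R})} \le \sqrt{\lambda}}$ does the job. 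Setting $v_\lambda := \chi_{R(\lambda)} u_\lambda$, which is bounded in $L^2$, each of the four terms in the displayed identity above will then be $o_{L^2}(\lambda)$: the first by the quasimode assumption, the second by the energy estimate combined with $\norm*{\nabla \chi_{R(\lambda)}}_{L^\infty} = o(1)$, the third by $\norm*{\Lap \chi_{R(\lambda)}}_{L^\infty} = O(1)$ and $\norm*{u_\lambda}_{L^2} = 1$, and the fourth by the choice of $R(\lambda)$.

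Dividing through by $\lambda^2$ and setting $h = 1/\lambda$ then yields $(-\frac{h^2}{2} \Lap - 1) v_\lambda = o_{L^2}(h)$, so Proposition~\ref{prop:escapeLap} applies to the bounded family $(v_\lambda)_\lambda$ and delivers $\int_{\R^d} a \abs*{v_\lambda}^2 \dd x \to 0$. Since $\chi_{R(\lambda)} \equiv 1$ on $B_{R(\lambda)}(0) \supset \supp a$ for $\lambda$ large, $v_\lambda$ and $u_\lambda$ coincide on $\supp a$, and the conclusion $\int_{\R^d} a \abs*{u_\lambda}^2 \dd x \to 0$ follows.
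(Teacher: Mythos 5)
Your proof is correct and follows essentially the same route as the paper's: an expanding spatial cutoff $\chi_{R(\lambda)}$ with $R(\lambda)\to\infty$ chosen so that $\norm*{V}_{L^\infty}$ on its support is $O(\sqrt{\lambda})=o(\lambda)$, the energy identity giving $\norm*{\nabla u_\lambda}_{L^2}=O(\lambda)$ to absorb the commutator term $\nabla\chi\cdot\nabla u_\lambda$, and then Proposition~\ref{prop:escapeLap} applied to $v_\lambda=\chi_{R(\lambda)}u_\lambda$, which agrees with $u_\lambda$ on $\supp a$ for large $\lambda$. This matches the paper's argument (with $h=1/\lambda$ and $\psi_{R_h}$ in place of your $\chi_{R(\lambda)}$), so nothing further is needed.
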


\begin{proof}
Let $(u_\lambda)_\lambda$ be a {$o(\lambda)$-quasimode} of $P$. Fix $a \in \cont_\comp^\infty(\R^d)$. We set $h = 1/\lambda$ and $u_h = u_\lambda$ so that\footnote{Notice that this corresponds to the semiclassical scaling associated with the free Laplacian~\eqref{eq:semiclassicalrescalingLap} and not the one involving a homogeneous potential~\eqref{eq:semiclassicalrescalingpot}.}
\begin{equation*}
( h^2 P - 1) u_h = o_{L^2}(h) \qquad \rm{as} \; h \to 0 .
\end{equation*}
Pick a cut-off function $\psi \in \cont_\comp^\infty(\R^d)$ which is identically equal to $1$ in a neighborhood of the origin, in order that $\psi_R(x) = \psi(x/R)$ is identically $1$ on $\supp a$ for all $R$ large enough. Define for any $h > 0$:
\begin{equation*}
R_h = \sup \set{R > 0}{\norm*{\psi_R V}_{L^\infty} \le \dfrac{1}{\sqrt{h}}} ,
\end{equation*}
which is a finite positive number for all $h$ sufficiently small. It follows from this definition that $\norm*{\psi_{R_h} V}_{L^\infty} \le 1/\sqrt{h}$ and that $R_h \to \infty$ as $h \to 0$ (recall that $V$ is a confining potential under~\eqref{eq:assumV}). Therefore, setting $v_h = \psi_{R_h} u_h$, we have
\begin{align} \label{eq:computationWKB}
\left(- \dfrac{h^2}{2} \Lap - 1\right) v_h
	&= \left(- \dfrac{h^2}{2} \Lap \psi_{R_h}\right) u_h - h^2 \nabla \psi_{R_h} \cdot \nabla u_h + \psi_{R_h} (h^2 P - 1) u_h - h^2 \psi_{R_h} V u_h \nonumber\\
	&= O_{L^2}\left(\dfrac{h^2}{R_h^2}\right) - \dfrac{h}{R_h} (\nabla \psi) \left(\dfrac{x}{R_h}\right) \cdot h \nabla u_h + o_{L^2}(h) + O_{L^2}\left(h^2 \norm*{\psi_{R_h} V}_{L^\infty}\right) \nonumber\\
	&= - \dfrac{h}{R_h} (\nabla \psi) \left(\dfrac{x}{R_h}\right) \cdot h \nabla u_h + o_{L^2}(h) .
\end{align}
Yet recall that
\begin{equation*}
\tfrac{1}{2} \norm*{h \nabla u_h}_{L^2}^2
	\le h^2 \left(\tfrac{1}{2} \norm*{\nabla u_h}_{L^2}^2 + \norm*{V^{1/2}u_h}_{L^2}^2\right)
	= \norm*{u_h}_{L^2}^2 + \inp*{(h^2 P - 1) u_h}{u_h}_{L^2}
	= 1 + o_{L^2}(h)
\end{equation*}
($u_h \in \dom P$). Together with~\eqref{eq:computationWKB}, this implies that $(v_h)_{h > 0}$ (which is a bounded family of $L^2$ functions) is a semiclassical $o(h)$-quasimode of the Laplacian, that is to say
\begin{equation*}
\left(- \dfrac{h^2}{2} \Lap - 1\right) v_h = o_{L^2}(h) \qquad \rm{as} \; h \to 0 .
\end{equation*}
Now Proposition~\ref{prop:escapeLap} yields
\begin{equation} \label{eq:defectmeasurezero}
\int_{\R^d} a \abs*{v_h}^2 \dd x \strongto{h \to 0} 0 , \qquad \forall a \in \cont_\comp^\infty(\R^d) .
\end{equation}
The result follows from the fact that $v_h(x) = u_h(x)$ on $\supp a$ for $h$ small enough.
\end{proof}

Now we prove Proposition~\ref{prop:aprioriSC}.

\begin{proof}[Proof of Proposition~\ref{prop:aprioriSC}]
We show the converse statement: suppose the damped wave equation~\eqref{eq:eq} is not uniformly stable. Then by Proposition~\ref{prop:quasimodes} there exists a {$o(\lambda)$-quasimode} of $P$ (see Item~\ref{it:quasimode}) denoted by $(u_\lambda)_\lambda$, which is ``asymptotically undamped" (see Item~\ref{it:asymptundamped}), that is to say $\inp*{u_\lambda}{b u_\lambda}_{L^2} \to 0$ as $\lambda \to \infty$. Let $R > 0$ and $a \in \cont_\comp^\infty(\R^d)$ be such that $0 \le a(x) \le 1$ for any $x \in \R^d$ and $a(x) = 1$ on $B_R(0)$. Then applying Proposition~\ref{prop:escape} we see that
\begin{equation*}
\abs*{\int_{B_R(0)} b \abs*{u_\lambda}^2 \dd x}
	\le \norm*{b}_{L^\infty} \int_{B_R(0)} \abs*{u_\lambda}^2 \dd x
	\le \norm*{b}_{L^\infty} \int_{\R^d} a \abs*{u_\lambda}^2 \dd x
	\strongto{\lambda \to \infty} 0 ,
\end{equation*}
which implies that
\begin{equation*}
o(1)
	= \inp*{u_\lambda}{b u_\lambda}_{L^2}
	= o(1) + \int_{\R^d \setminus B_R(0)} b \abs*{u_\lambda}^2 \dd x .
\end{equation*}
Writing $b_R = \essinf_{\R^d \setminus B_R(0)} b$, we have, applying Proposition~\ref{prop:escape} once more
\begin{equation*}
o(1)
	= \int_{\R^d \setminus B_R(0)} b \abs*{u_\lambda}^2 \dd x
	\ge b_R \int_{\R^d \setminus B_R(0)} \abs*{u_\lambda}^2 \dd x
	= b_R \left(1 + o(1)\right)
\end{equation*}
as $\lambda \to \infty$, so that $b_R = 0$. Notice that $R > 0$ is arbitrary. Therefore~\eqref{eq:boundedoutsidecompactset} is not true, which concludes the proof.
\end{proof}

\section{Proof of Theorem~\ref{thm:necessarycondition}} \label{sec:necessarycondition}

In this section we prove Theorem~\ref{thm:necessarycondition}. The rough idea is to construct quasimodes which concentrate in space as much as possible in order to derive the stronger possible necessary condition on $b$.
%
%
We start with a technical lemma quantifying the growth of the potential.

\begin{lemma} \label{lem:lemV}
Let $V$ satisfy~\eqref{eq:assumV} and Assumption~\eqref{assum:assumptionspotentialdampedeq}. Then there exists a non-increasing function $\epsilon : \R_+^\star \mapsto \R_+^\star$ tending to zero at infinity, and depending only on $V$, such that the following assertion holds: for any $\lambda \ge 1$ and $x_0 \in \R^d$ satisfying $V(x_0) \le \lambda^2$ we have
\begin{equation*}
\forall x \in \bar B_{\sqrt{\lambda}}(x_0), \qquad \abs*{V(x) - V(x_0)} \le \abs*{x - x_0} \lambda^{3/2} \epsilon(\lambda) .
\end{equation*}
\end{lemma}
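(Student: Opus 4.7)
Proof plan: The approach is to apply the mean value inequality to $V$ along the segment joining $x_0$ to $x$, which reduces the task to establishing a uniform bound of order $o(\lambda^{3/2})$ on $|\nabla V|$ over the ball $\bar B_{\sqrt{\lambda}}(x_0)$, uniformly over $x_0$ subject to $V(x_0) \le \lambda^2$.

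To quantify strict sub-quarticity, set $\epsilon_0(R) := \sup_{|y| \ge R} |\nabla V(y)| / V(y)^{3/4}$ (with the convention $0/0 := 0$, legitimate because $V \ge 0$ forces $\nabla V$ to vanish on the zero set of $V$); the hypothesis says exactly that $\epsilon_0$ is a finite non-increasing function with $\epsilon_0(R) \to 0$ as $R \to \infty$. Also set $M(R) := \sup_{|y| \le R} |\nabla V(y)|$, finite by continuity of $\nabla V$. Given $\delta \in (0, 1]$, pick $R_0 = R_0(\delta)$ with $\epsilon_0(R_0) \le \delta$, and restrict to $\lambda \ge (M(R_0)/\delta)^{2/3}$, so that $M(R_0) \le \delta \lambda^{3/2}$.

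Along the segment $\gamma(s) = x_0 + s(x - x_0)$, $s \in [0, 1]$, the regularized quantity $s \mapsto (V(\gamma(s)) + \lambda^2)^{1/4}$ has derivative bounded in absolute value by
\[
\tfrac{1}{4} |x - x_0| \cdot |\nabla V(\gamma(s))| \, (V(\gamma(s)) + \lambda^2)^{-3/4} \le \tfrac{\delta}{4} |x - x_0|,
\]
the key point being that $|\nabla V|/(V + \lambda^2)^{3/4} \le \delta$ both when $|\gamma(s)| \ge R_0$ (using $\epsilon_0(R_0) \le \delta$ and $(V + \lambda^2)^{3/4} \ge V^{3/4}$) and when $|\gamma(s)| \le R_0$ (using $(V + \lambda^2)^{3/4} \ge \lambda^{3/2} \ge M(R_0)/\delta$); using the $\lambda$-dependent regularizer $+ \lambda^2$ in place of the naive $+ 1$ is essential so that the contribution from $\{|z| \le R_0\}$ scales correctly in $\lambda$. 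Integrating over $s \in [0, 1]$ and invoking $V(x_0) \le \lambda^2$ and $|x - x_0| \le \sqrt{\lambda}$ yields $(V(x) + \lambda^2)^{1/4} \le 2^{1/4} \sqrt{\lambda} + \delta \sqrt{\lambda}/4 \le C_0 \sqrt{\lambda}$ with $C_0 = 2^{1/4} + 1/4$ universal, whence $V(x) \le C_0^4 \lambda^2$.

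Re-injecting this into the pointwise estimate $|\nabla V(x)| \le \delta V(x)^{3/4} + M(R_0)$ (valid on all of $\R^d$ by splitting on whether $|x|$ exceeds $R_0$) and absorbing $M(R_0) \le \delta \lambda^{3/2}$ yields $|\nabla V(x)| \le (C_0^3 + 1) \delta \lambda^{3/2}$. Hence
\[
\tilde \epsilon(\lambda) := \lambda^{-3/2} \sup \left\{ |\nabla V(y)| : V(x_0) \le \lambda^2, \ |y - x_0| \le \sqrt{\lambda} \right\}
\]
tends to zero as $\lambda \to \infty$; its non-increasing envelope, shifted by $1/\lambda$ to ensure strict positivity, is the desired $\epsilon$, and the mean value inequality applied along $[x_0, x]$ then furnishes the lemma. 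The main subtlety lies in introducing the $\lambda$-dependent regularizer $(V + \lambda^2)^{1/4}$: it enables a bootstrap of strict sub-quarticity into a bound on $V$ over the ball, uniformly in $x_0$, while simultaneously handling the compact region where the sub-quartic estimate is useless.
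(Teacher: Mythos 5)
Your proof is correct and follows essentially the same route as the paper: a mean value inequality applied to a fourth-root regularization of $V$ bootstraps $V(x_0)\le\lambda^2$ into $V\lesssim\lambda^2$ on $\bar B_{\sqrt{\lambda}}(x_0)$, after which strict sub-quarticity plus a compact-region cutoff gives $\sup_{\bar B_{\sqrt{\lambda}}(x_0)}\abs{\nabla V}=o(\lambda^{3/2})$ and the mean value inequality on $V$ itself concludes. The only cosmetic differences are your regularizer $(V+\lambda^2)^{1/4}$ with $\delta$--$R_0$ bookkeeping and a monotone-envelope definition of $\epsilon$, versus the paper's $(1+V)^{1/4}$ (controlled by a single global Lipschitz constant) and an explicit infimum-over-cutoff formula for $\epsilon(\lambda)$; in particular the $\lambda$-dependent shift is not actually essential, since the first stage only needs $V\le C\lambda^2$ on the ball with some fixed constant.
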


\begin{proof}
Set
\begin{equation} \label{eq:wesetC}
C = \sup_{z \in \R^d} \dfrac{\abs*{\nabla V(z)}}{4 \left(1 + V(z)\right)^{3/4}} < \infty ,
\end{equation}
which is well-defined under Assumption~\ref{assum:assumptionspotentialdampedeq}, and set $C_V := (2^{1/4} + C)^3 \ge 1$, which depends only on $V$. Fix $A_0 > 0$ such that $V(x) \ge 1$ whenever $\abs*{x} \ge A_0$ (it exists since $V$ is confining under~\eqref{eq:assumV}) and set
\begin{equation} \label{eq:defepsilonlambda}
\epsilon(\lambda) = C_V \inf_{A \ge A_0} \left( \dfrac{\norm*{\nabla V}_{L^\infty(B_A(0))}}{\lambda^{3/2}} + \sup_{\abs*{z} \ge A} \dfrac{\abs*{\nabla V(z)}}{V(z)^{3/4}} \right) .
\end{equation}
From this definition, it is clear that $\epsilon$ is well-defined for any $\lambda > 0$, takes positive values ($V$ cannot vanish identically since it is confining) and that it is indeed non-increasing. To show that $\epsilon(\lambda) \to 0$ as $\lambda \to + \infty$, we proceed as follows. Let $\eta > 0$. We fix $A \ge A_0$ large enough so that the second term in the right-hand side of~\eqref{eq:defepsilonlambda} is smaller than $\eta/2$ (recall $V$ satisfies $\nabla V(x) = o(V(x)^{3/4})$ as $x \to \infty$). Then for any $\lambda \ge \norm*{\nabla V}_{L^\infty(B_A(0))}^{2/3} (2/\eta)^{2/3}$, the first term is smaller than $\eta/2$, which gives $\epsilon(\lambda) \le \eta$, and $\eta$ is arbitrary.

Take $\lambda \ge 1$ and $x_0 \in \R^d$ such that $V(x_0) \le \lambda^2$ as in the statement. Since $x \mapsto (1 + V(x))^{1/4}$ is of class $\cont^1$ on $\R^d$, we can apply the mean value inequality to have
\begin{equation*}
\forall x \in \bar B_{\sqrt{\lambda}}(x_0), \qquad \abs*{\left(1 + V(x)\right)^{1/4} - \left(1 + V(x_0)\right)^{1/4}}
	\le C \abs*{x - x_0}
	\le C \sqrt{\lambda} ,
\end{equation*}
with $C$ the constant in~\eqref{eq:wesetC}.
Recalling that $\lambda \ge 1$, we deduce that for any $x \in \bar B_{\sqrt{\lambda}}(x_0)$, it holds
\begin{equation*}
\left(1 + V(x)\right)^{3/4}
	\le \left(\left(1 + V(x_0)\right)^{1/4} + C \sqrt{\lambda} \right)^3
	\le \left((2 \lambda^2)^{1/4} + C \sqrt{\lambda}\right)^3
	\le (2^{1/4} + C)^3 \lambda^{3/2}
	= C_V\lambda^{3/2} .
\end{equation*}
After using the mean value inequality on $V$, it allows us to obtain for any $x \in \bar B_{\sqrt{\lambda}}(x_0)$ and any $A \ge A_0$:
\begin{align*}
\abs*{V(x) - V(x_0)}
	&\le \abs*{x - x_0} \sup_{t \in [0, 1]} \abs*{\nabla V(x_0 + t (x - x_0))} \\
	&\le \abs*{x - x_0} \left( \lambda^{3/2} \dfrac{\norm*{\nabla V(y)}_{L^\infty(B_A(0))}}{\lambda^{3/2}} + \sup_{y \in B_{\sqrt{\lambda}}(x_0) \setminus B_A(0)} \dfrac{\abs*{\nabla V(y)}}{V(y)^{3/4}} \left( 1 + V(y)\right)^{3/4} \right) \\
	&\le C_V \abs*{x - x_0} \lambda^{3/2} \left( \dfrac{\norm*{\nabla V(y)}_{L^\infty(B_A(0))}}{\lambda^{3/2}} + \sup_{y \in \R^d\setminus B_A(0)} \dfrac{\abs*{\nabla V(y)}}{V(y)^{3/4}} \right) ,
\end{align*}
and the result follows by taking the infimum over $A \ge A_0$.
\end{proof}

\begin{lemma}[Construction of quasimodes in the potential regime] \label{lem:potentialquasimodes}
Let $P$ be the operator defined in~\eqref{eq:defP} with a potential $V$ subject to~\eqref{eq:assumV}. Let $k \in \cont_\comp^\infty(\R^d)$ be non-negative, supported in $B_1(0)$ with $\norm*{k}_{L^2} = 1$. There exists $C > 0$ and $M > 0$ such that for any $x_0 \in \R^d$ with $\abs{x_0} \ge M$, writing $\lambda = \sqrt{V(x_0)}$, for any $R \in [1, \lambda]$, the function $u$ defined by
\begin{equation*}
u(x)
	= r^{-d/2} k\left(\dfrac{x - x_0}{r}\right) ,
		\qquad x \in \R^d, r = \dfrac{R}{\sqrt{\lambda}} ,
\end{equation*}
satisfies
\begin{equation*}
\norm*{u}_{L^2}
	= 1
		\qquad \rm{and} \qquad
\norm*{(P - \lambda^2) u}_{L^2}
	\le C \lambda \left( \dfrac{1}{R^2} + R \epsilon(\lambda) \right) ,
\end{equation*}
Here $\epsilon$ is the function from Lemma~\ref{lem:lemV} associated with $V$ and the constant $C$ depends only on the dimension $d$ and the choice of $k$.
\end{lemma}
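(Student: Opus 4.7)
The proof is essentially a direct computation combining scaling with the quantitative control on the potential supplied by Lemma~\ref{lem:lemV}. Let me outline the steps.

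First, by the definition of $\lambda$ as $\sqrt{V(x_0)}$, we have $\lambda^2 = V(x_0)$, so
\begin{equation*}
(P - \lambda^2) u
	= \bigl(V(x) - V(x_0)\bigr) u - \tfrac{1}{2} \Lap u ,
\end{equation*}
and the task reduces to estimating each of these two terms in $L^2$. The normalization $\norm{u}_{L^2} = 1$ is immediate from the change of variable $y = (x-x_0)/r$.

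For the kinetic term, a direct computation in the scaling $r = R/\sqrt{\lambda}$ gives $\Lap u(x) = r^{-d/2 - 2} (\Lap k)\bigl((x-x_0)/r\bigr)$, whence
\begin{equation*}
\norm*{\tfrac{1}{2} \Lap u}_{L^2}
	= \tfrac{1}{2} r^{-2} \norm*{\Lap k}_{L^2}
	= \tfrac{1}{2} \dfrac{\lambda}{R^2} \norm*{\Lap k}_{L^2} ,
\end{equation*}
which accounts for the $\lambda/R^2$ contribution (with a constant depending only on $k$ and $d$).

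For the potential term, I would choose $M$ large enough that $\abs{x_0} \ge M$ forces $V(x_0) \ge 1$, i.e.\ $\lambda \ge 1$, which allows me to apply Lemma~\ref{lem:lemV} at the base point $x_0$ (the hypothesis $V(x_0) \le \lambda^2$ holds with equality). Since $u$ is supported in $\bar B_r(x_0)$ with $r = R/\sqrt{\lambda} \le \sqrt{\lambda}$ (using $R \le \lambda$), the ball $\bar B_r(x_0)$ lies inside $\bar B_{\sqrt{\lambda}}(x_0)$, so Lemma~\ref{lem:lemV} yields
\begin{equation*}
\sup_{x \in \supp u} \abs*{V(x) - V(x_0)}
	\le r \, \lambda^{3/2} \epsilon(\lambda)
	= R \lambda \, \epsilon(\lambda) ,
\end{equation*}
and therefore $\norm{(V - V(x_0)) u}_{L^2} \le R \lambda \, \epsilon(\lambda)$. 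Adding the two estimates and setting $C = \max\{\tfrac{1}{2} \norm{\Lap k}_{L^2}, 1\}$ gives exactly the stated bound.

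There is no real obstacle here; the only point requiring attention is the compatibility between the scaling $r = R/\sqrt{\lambda}$ and the radius $\sqrt{\lambda}$ used in Lemma~\ref{lem:lemV}, which is exactly the reason the constraint $R \le \lambda$ appears in the hypothesis. The threshold $M$ depends only on $V$ through the condition ensuring $\lambda \ge 1$.
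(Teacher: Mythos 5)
Your proof is correct and follows essentially the same route as the paper: unitarity of the rescaling for the normalization, the scaling identity for $\Lap u$ giving the $\lambda/R^2$ term, and Lemma~\ref{lem:lemV} on $\supp u \subset \bar B_{r}(x_0) \subset \bar B_{\sqrt{\lambda}}(x_0)$ (valid since $R \le \lambda$ and $M$ is chosen so that $\lambda \ge 1$) giving the $R\lambda\,\epsilon(\lambda)$ term. The only cosmetic difference is that the paper keeps the weight $\abs{x - x_0}$ inside the $L^2$ norm before bounding it by $r$, whereas you bound it by $r$ pointwise first; the resulting estimate is the same.
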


\begin{proof}
The claim that $\norm*{u}_{L^2} = 1$ follows from the fact that $\norm*{k}_{L^2} = 1$ and the operator $M_r : f \mapsto r^{-d/2} f(\frac{\bullet - x_0}{r})$ is unitary.

We estimate the Laplacian of $u$: we have
\begin{equation*}
\norm*{\Delta u}_{L^2}
	= \norm*{\dfrac{1}{r^2} M_r (\Delta k)}_{L^2}
	\le \dfrac{\lambda}{R^2} \norm*{\Delta k}_{L^2} .
\end{equation*}
We estimate the potential energy now: using Lemma~\ref{lem:lemV} (recall that $u$ is supported in $B_r(x_0) \subset B_{\sqrt{\lambda}}(x_0)$), we have
\begin{equation*}
\norm*{(V - \lambda^2) u}_{L^2}
	\le \lambda^{3/2} \epsilon(\lambda) \norm*{\abs{\bullet - x_0} u}_{L^2}
	= \lambda R \epsilon(\lambda) \norm*{M_r \tilde k}_{L^2}
	= \lambda R \epsilon(\lambda) \norm*{\tilde k}_{L^2} ,
\end{equation*}
where $\tilde k(x) = \abs{x - x_0} k(x - x_0)$. Lemma~\ref{lem:lemV} works here provided $\abs{x_0} \ge M$ is large enough so that $\lambda = V(x_0)^{1/2} \ge 1$ (recall that $V$ is confining). We obtain the desired estimate.
\end{proof}

Now we prove Theorem~\ref{thm:necessarycondition}.

\begin{proof}[Proof of Theorem~\ref{thm:necessarycondition}]
We show the converse statement. We know by Proposition~\ref{prop:aprioriNC} that~\eqref{eq:GCC} is necessary. Thus, we assume that~\eqref{eq:TPC} is violated and prove that uniform stability fails. In virtue of Proposition~\ref{prop:quasimodes}, it suffices to exhibit a family of normalized $L^2$ functions $(u_\lambda)_\lambda$ in $\dom P$ such that
\begin{equation*}
(P - \lambda^2) u_\lambda
	= o_{L^2}(\lambda)
		\qquad \rm{and} \qquad
\inp*{u_\lambda}{b u_\lambda}_{L^2}
	= o(1)
\end{equation*}
as $\lambda \to \infty$.

That~\eqref{eq:TPC} does not hold implies that
\begin{equation*}
\forall n \in \N , \qquad
	\fint_{B_{R_n/V(x_{n, k})^{1/4}}(x_{n, k})} b(y) \dd y
		\strongto{k \to \infty} 0 ,
			\qquad R_n := n+1 ,
\end{equation*}
along a sequence of points $x_{n, k} \to \infty$ in $\R^d$. For all $n \in \N$, we set $x_n := x_{n, k_n}$ where $k_n$ is chosen sufficiently large in such a way that
\begin{equation} \label{eq:choiceparamn}
\forall n \in \N , \qquad
	\fint_{B_{R_n/V(x_n)^{1/4}}(x_n)} b(y) \dd y
		\le 2^{-n}
			\quad \rm{and} \quad
		R_n \le \min \left\{\dfrac{1}{\sqrt{\epsilon(\sqrt{V(x_n)})}}, V(x_n)^{1/2} \right\} ,
\end{equation}
where $\epsilon$ is the function associated with $V$ introduced in Lemma~\ref{lem:lemV}. This is possible since $V$ is confining and $\epsilon(\lambda) \to 0$ as $\lambda \to + \infty$.
Moreover, defining $\lambda_n = V(x_n)^{1/2}$ we can apply Lemma~\ref{lem:potentialquasimodes} to obtain a sequence of $L^2$ normalized functions $(u_n)_{n \in \N}$ satisfying for any $n$ large enough
\begin{equation*}
\norm*{\left( P - \lambda_n^2 \right) u_n}_{L^2}
	\le C \lambda_n \left( \dfrac{1}{R_n^2} + R_n \epsilon(\lambda_n) \right)
	= \lambda_n O\left( \dfrac{1}{(n+1)^2} + \sqrt{\epsilon(\lambda_n)} \right) = o(\lambda_n) ,
\end{equation*}
since $\epsilon(\lambda) \to 0$ as $\lambda \to + \infty$. Therefore $(u_n)_{n \in \N}$ is a $o(\lambda_n)$-quasimode of  $P$. It remains to prove that it is asymptotically undamped in the sense of Item~\ref{it:asymptundamped} of Proposition~\ref{prop:quasimodes}. By definition of $u_n$ in Lemma~\ref{lem:potentialquasimodes}, we have
\begin{equation*}
0 \le \int_{\R^d} \abs*{u_n(x)}^2 b(y) \dd y
	\le \dfrac{\norm*{k}_{L^\infty}^2}{(R_n/V(x_n)^{1/4})^d} \int_{B_{R_n/V(x_n)^{1/4}}(x_n)} b(y) \dd y
	\le \abs*{B_1(0)} 2^{-n} \norm*{k}_{L^\infty}^2 ,
		\qquad \forall n \in \N ,
\end{equation*}
where we used~\eqref{eq:choiceparamn} in the last inequality.
This tends to zero as $n \to \infty$, and the proof is complete.
\end{proof}

As an application of Theorem~\ref{thm:necessarycondition}, we have the following.

\begin{proof}[Proof of Corollary~\ref{cor:characterizationregulardamping}]
We already know that $b$ bounded from below outside a compact set is a sufficient condition by Proposition~\ref{prop:aprioriSC}. Conversely, it is necessary that~\eqref{eq:TPC} holds in virtue of Theorem~\ref{thm:necessarycondition}, namely
\begin{equation} \label{eq:lowerlimittpc}
\exists c > 0, \exists R > 0 : \qquad \liminf_{x \to \infty} \fint_{B_{R/V(x)^{1/4}}(x)} b(y) \dd y \ge c .
\end{equation}
Yet since $b$ is uniformly continuous, there exists $\eps > 0$ such that for any $x, y \in \R^d$ such that $\abs*{x - y} \le \eps$, we have $\abs*{b(x) - b(y)} \le c/3$. Now by choosing a parameter $A > 0$ large enough, we can achieve two things: on the one hand, in view of the lower limit in~\eqref{eq:lowerlimittpc}, we can secure
\begin{equation*}
\forall \abs{x} \ge A , \qquad
	\fint_{B_{R/V(x)^{1/4}}(x)} b(y) \dd y
		\ge \dfrac{2}{3} c ;
\end{equation*}
on the other hand, since $V$ is a confining potential, we can ensure that $R/V(x)^{1/4} \le \eps$ for all $\abs{x} \ge A$.
In particular, for any such $x$, there exists $y \in B_{R/V(x)^{1/4}}(x)$ such that $b(y) \ge 2c/3$. We conclude that $b(x) \ge b(y) - c/3 \ge c/3$ since $\abs*{x - y} \le \eps$, which completes the proof.
\end{proof}

\section{Stabilization condition} \label{sec:stabilizationcondition}

In this section, we prove Proposition~\ref{prop:SCgeom}. A noteworthy consequence of Assumption~\ref{assum:assumptionspotentialdampedeq} is that trajectories of the Hamiltonian flow $(\varphi^t)_{t \in \R}$ are well-approximated around a given point by their tangent for small times, as shown below.

For the sequel, it is convenient to introduce new variables adapted to the typical scales of our problem described in~\eqref{eq:typicalscales}. Let $t \mapsto \varphi^t(x, \xi) = (x^t, \xi^t)$ be a trajectory of the Hamiltonian flow, i.e.\ satisfying the o.d.e.\ system~\eqref{eq:Hamilton}, and suppose it is contained in the energy shell $\{p = \lambda^2\}$.
We set $s = t \lambda$ and consider the reparametrization $s \mapsto (x^{s/\lambda}, \xi^{s/\lambda})$. In doing so, we observe that the fact that $\xi^{s/\lambda}$ is the time derivative of $x^{s/\lambda}$ (with respect to $s$) is not true anymore. To correct this issue, we set
\begin{equation} \label{eq:linearizedflow}
\phi_\lambda^s(x, \xi)
	:= (y_s^\lambda, \eta_s^\lambda)
	= \left(x^{s/\lambda}, \dfrac{1}{\lambda} \xi^{s/\lambda}\right) ,
\end{equation}
which in turn satisfies
\begin{equation} \label{eq:odelinearizedflow}
\begin{split}
\left\{
\begin{aligned}
\dot y_s^\lambda &= \eta_s^\lambda \\
\dot \eta_s^\lambda &= - \dfrac{1}{\lambda^2} \nabla V(y_s^\lambda)
\end{aligned}
\right. \qquad \rm{and} \qquad p(y_s^\lambda, \lambda \eta_s^\lambda) = V(y_s^\lambda) + \dfrac{\lambda^2}{2} \abs*{\eta_s^\lambda}^2 = \lambda^2 , \; \forall s \in \R .
\end{split}
\end{equation}

\begin{lemma} \label{lem:lemV2}
Suppose $V$ is subject to~\eqref{eq:assumV} and Assumption~\ref{assum:assumptionspotentialdampedeq}. Recall the function $\epsilon$ from Lemma~\ref{lem:lemV}. Let $T > 0, \lambda > 0$, $(x, \xi) \in \{p = \lambda^2\}$, and introduce $(y_s^\lambda, \eta_s^\lambda) = \phi_\lambda^s(x, \xi)$ defined in~\eqref{eq:linearizedflow}. Set $(y, \eta) := (y_0^\lambda, \eta_0^\lambda)$. Then
\begin{equation*}
\forall s \in [-T, T], \qquad
\abs*{\eta_s^\lambda - \eta}
	\le\dfrac{T}{\sqrt{\lambda}} \epsilon(\lambda) \quad \rm{and} \quad \abs*{y_s^\lambda - (y + s \eta)}
	\le\dfrac{T^2}{\sqrt{\lambda}} \epsilon(\lambda) .
\end{equation*}
\end{lemma}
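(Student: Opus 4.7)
The plan is to integrate the Hamiltonian system~\eqref{eq:odelinearizedflow} twice, bootstrapping the estimate on $\eta_s^\lambda$ into one for $y_s^\lambda$. The core analytic fact needed is a pointwise bound of the form $|\nabla V(z)| \le \lambda^{3/2}\epsilon(\lambda)$ valid for any $z$ with $V(z) \le \lambda^2$. Once this is in hand, the two estimates follow from straightforward integration, so the whole lemma reduces to proving this uniform gradient bound.

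First I would extract this gradient estimate directly from the ingredients used in Lemma~\ref{lem:lemV}. Conservation of energy in~\eqref{eq:odelinearizedflow} shows that $V(y_s^\lambda) \le \lambda^2$ for all $s\in\R$, so the trajectory stays in the sublevel set $\{V \le \lambda^2\}$. For an arbitrary $z$ in this sublevel set and an arbitrary $A \ge A_0$, I would split into the two regimes $|z| \le A$ and $|z| > A$. On the compact ball the bound $|\nabla V(z)| \le \|\nabla V\|_{L^\infty(B_A(0))}$ is trivial, while on $\{|z| > A\}$ I use Assumption~\ref{assum:assumptionspotentialdampedeq} in the quantitative form
\begin{equation*}
|\nabla V(z)| \le V(z)^{3/4} \sup_{|w| \ge A} \frac{|\nabla V(w)|}{V(w)^{3/4}} \le \lambda^{3/2} \sup_{|w| \ge A} \frac{|\nabla V(w)|}{V(w)^{3/4}}.
\end{equation*}
Combining the two pieces and taking the infimum over $A \ge A_0$, the definition~\eqref{eq:defepsilonlambda} of $\epsilon$ yields $|\nabla V(z)| \le \lambda^{3/2}\epsilon(\lambda)/C_V \le \lambda^{3/2}\epsilon(\lambda)$ (since $C_V \ge 1$).

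With this bound, the proof is mechanical. From the second equation of~\eqref{eq:odelinearizedflow} and $(y_0^\lambda, \eta_0^\lambda) = (y, \eta)$,
\begin{equation*}
\eta_s^\lambda - \eta = -\frac{1}{\lambda^2}\int_0^s \nabla V(y_u^\lambda)\, \dd u,
\end{equation*}
so applying the gradient estimate at $z = y_u^\lambda$ gives $|\eta_s^\lambda - \eta| \le \lambda^{-1/2}\epsilon(\lambda)\, |s|$, which is the first claim for $|s|\le T$. Then the first equation $\dot y_s^\lambda = \eta_s^\lambda$ integrates into
\begin{equation*}
y_s^\lambda - (y + s\eta) = \int_0^s (\eta_u^\lambda - \eta)\, \dd u,
\end{equation*}
and inserting the previous bound produces $|y_s^\lambda - (y + s\eta)| \le \lambda^{-1/2}\epsilon(\lambda)\, s^2/2$, which fits inside $T^2\lambda^{-1/2}\epsilon(\lambda)$ for $|s|\le T$.

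The only genuine subtlety is the first paragraph: Lemma~\ref{lem:lemV} is stated for $x$ in a ball $\bar B_{\sqrt\lambda}(x_0)$ of a point with $V(x_0) \le \lambda^2$, not for an arbitrary point of the sublevel set $\{V \le \lambda^2\}$. I therefore cannot invoke Lemma~\ref{lem:lemV} as a black box and must rerun its argument without the localization to $\bar B_{\sqrt\lambda}(x_0)$. This is not really an obstacle, but it is the step where one has to be a bit careful to keep the dependence on $\epsilon(\lambda)$ exactly as stated, rather than with an additional multiplicative constant that would spoil the asymptotic $\epsilon(\lambda)\to 0$.
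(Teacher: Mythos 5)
Your proof is correct and follows essentially the same route as the paper: energy conservation gives $V(y_s^\lambda)\le\lambda^2$, the gradient bound $|\nabla V|\le\lambda^{3/2}\epsilon(\lambda)$ on the sublevel set is obtained by the same splitting at radius $A$ and infimum over $A\ge A_0$ as in the definition of $\epsilon$ (the paper likewise reruns this rather than quoting Lemma~\ref{lem:lemV} as a black box), and the two estimates then follow by integrating the ODE system twice. The only difference is cosmetic: you isolate the pointwise gradient bound first, whereas the paper takes the supremum along the trajectory inside the integral inequality.
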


\begin{proof}
Recalling the o.d.e.\ system~\eqref{eq:odelinearizedflow} satisfied by $s \mapsto (y_s^\lambda, \eta_s^\lambda)$, we have
\begin{equation} \label{eq:estimatelengthcurve}
\dfrac{\dd}{\dd s} \eta_s^\lambda
	= - \dfrac{1}{\lambda^2} \nabla V(y_s^\lambda) .
\end{equation}
Therefore, using the mean value inequality, we have for any $s \in [-T, T]$:
\begin{equation} \label{eq:differencecurveslinearized}
\abs*{\eta_s^\lambda - \eta}
	= \abs*{\int_0^s - \dfrac{1}{\lambda^2} \nabla V(y_\tau^\lambda) \dd \tau}
	\le \dfrac{T}{\lambda^2} \sup_{s \in [-T, T]} \abs*{\nabla V(y_s^\lambda)}
	= \dfrac{T}{\sqrt{\lambda}} \sup_{s \in [-T, T]} \dfrac{\abs*{\nabla V(y_s^\lambda)}}{\lambda^{3/2}} .
\end{equation}
Yet for any $A \ge A_0$, we have
\begin{equation*}
\sup_{s \in [-T, T]} \dfrac{\abs*{\nabla V(y_s^\lambda)}}{\lambda^{3/2}}
	\le \dfrac{\norm*{\nabla V}_{L^\infty(B_A(0))}}{\lambda^{3/2}} + \sup_{\substack{s \in [-T, T] \\ \abs*{y_s^\lambda} \ge A}} \dfrac{\abs*{\nabla V(y_s^\lambda)}}{\lambda^{3/2}}
	\le \dfrac{\norm*{\nabla V}_{L^\infty(B_A(0))}}{\lambda^{3/2}} + \sup_{\abs*{z} \ge A} \dfrac{\abs*{\nabla V(z)}}{V(z)^{3/4}} ,
\end{equation*}
where we used the inequality $V(y_s^\lambda) \le \lambda^2$, as a consequence of the fact that $p$ is preserved by the Hamiltonian flow, namely $p(y_s^\lambda, \lambda \eta_s^\lambda) = \lambda^2, \forall s \in \R$ with our new variables. Combining this estimate with~\eqref{eq:differencecurveslinearized} and taking the infimum over $A \ge A_0$ yield the sought estimate for $\abs*{\eta_s^\lambda - \eta}$. Subsequently, applying the mean value inequality on $y_s^\lambda - (y + s \eta)$, we obtain for any $s \in [-T, T]$:
\begin{equation*}
\abs*{y_s^\lambda - (y + s \eta)} \le T \sup_{s \in [-T, T]} \abs*{\eta_s^\lambda - \eta} \le \dfrac{T^2}{\sqrt{\lambda}} \epsilon(\lambda) ,
\end{equation*}
which finishes the proof.
\end{proof}


\begin{proof}[Proof of Proposition~\ref{prop:SCgeom}]
Assume~\eqref{eq:SCdyn} holds for a fixed $b \in L^\infty(\R^d)$ with constants $T, R > 0$. First we prove that~\eqref{eq:GCC} holds (in fact we show~\eqref{eq:GCCnear0}, see Remark~\ref{rmk:GCCnear0}). From~\eqref{eq:SCdyn}, there exist constants $c, \lambda_0 > 0$ such that, if we pick an arbitrary point $(x_0, \nu_0) \in S \R^d$, considering $\rho_\lambda = (x_0, \lambda \nu_0) \in T^\star \R^d$, we have
\begin{equation} \label{eq:dsclambda0larger}
\forall \lambda \ge \lambda_0 , \qquad
	\fint_{-T}^T \left( b \ast \kappa_{R/\sqrt{\tilde \lambda}} \right) \left( \varphi_{t/\tilde \lambda}(\rho_\lambda) \right) \dd t
		\ge c ,
\end{equation}
where $\tilde \lambda^2 := p(\rho_\lambda)$. Notice that $\tilde \lambda \sim \lambda$ as $\lambda \to + \infty$.
Yet using Lemma~\ref{lem:lemV2}, we have $(\pi \circ \varphi_{t/\tilde \lambda})(\rho_\lambda) = x_0 + t \nu_0 + o(T^2/\sqrt{\tilde \lambda})$ as $\lambda \to + \infty$ uniformly in $t \in [-T, T]$, so that
\begin{multline*}
\abs*{\fint_{-T}^T \left( b \ast \kappa_{R/\sqrt{\tilde \lambda}} \right) \left( x_0 + t \nu_0 \right) \dd t - \int_{-T}^T \left( b \ast \kappa_{R/\sqrt{\tilde \lambda}} \right) \left( \varphi_{t/\tilde \lambda}(\rho_\lambda) \right) \dd t} \\
	\le \norm*{b}_{L^\infty} \norm*{\kappa_{R/\sqrt{\tilde \lambda}}\left(\bullet + x_0 + t \nu_0 - (\pi \circ \varphi_{t/\tilde \lambda})(\rho_\lambda)\right) - \kappa_{R/\sqrt{\tilde \lambda}}}_{L^1} \\
	= \norm*{b}_{L^\infty} \norm*{\kappa\left(\bullet + o\left(\frac{T^2}{R}\right)\right) - \kappa}_{L^1} ,
\end{multline*}
using the triangle inequality and a change of variables. The right-hand side goes to zero as $\lambda \to + \infty$ (by dominated convergence for instance) since $R$ and $T$ are independent of $\lambda$. Combining this with~\eqref{eq:dsclambda0larger} yields
\begin{equation*}
\fint_{-T}^T \left( b \ast \kappa_{R/\sqrt{\tilde \lambda}} \right) \left( x_0 + t \nu_0 \right) \dd t
	\ge c + o(1)
\end{equation*}
as $\lambda \to \infty$ (or equivalently $\tilde \lambda \to \infty$). This proves that~\eqref{eq:GCC} holds.

Next we prove that~\eqref{eq:TPC} holds. To this aim, we consider an arbitrary sequence of points $\rho_n = (x_n, 0)$ where $x_n \to \infty$ as $n \to \infty$ and we write $\lambda_n = \sqrt{V(x_n)}$ (which goes to $+ \infty$ since $V$ is a confining potential). Write for simplicity $x_n = x_\lambda$, $\rho_n = \rho_\lambda$. Then we have from~\eqref{eq:SCdyn}:
\begin{equation} \label{eq:tpclambda0greater}
\exists c, \lambda_0 > 0 : \forall \lambda \ge \lambda_0 , \qquad
	\fint_{-T}^T \left( b \ast \kappa_{R/V(x_\lambda)^{1/4}} \right) \left( \varphi_{t/\lambda}(\rho_\lambda) \right) \dd t 
		\ge c .
\end{equation}
Yet using Lemma~\ref{lem:lemV2} once again, we have $(\pi \circ \varphi_{t/\lambda})(\rho_\lambda) = x_\lambda + o(T^2/\sqrt{\lambda})$ as $\lambda \to + \infty$ uniformly in $t \in [-T, T]$, so that
\begin{multline*}
\abs*{\fint_{B_{R/\sqrt{\tilde \lambda}}(x_\lambda)} b(y) \dd y - \fint_{-T}^T \left( b \ast \kappa_{R/\sqrt{\tilde \lambda}} \right) \left( \varphi_{t/\lambda}(\rho_\lambda) \right) \dd t} \\
	\le \norm*{b}_{L^\infty} \norm*{\kappa_{R/\sqrt{\tilde \lambda}} - \kappa_{R/\sqrt{\tilde \lambda}}\left( \bullet + (\pi \circ \varphi_{t/\lambda})(\rho_\lambda) - x_\lambda \right)}_{L^1}
	= \norm*{b}_{L^\infty} \norm*{\kappa - \kappa\left( \bullet + o\left(\frac{T^2}{R}\right) \right)}_{L^1} ,
\end{multline*}
which tends to zero again as $\lambda \to + \infty$. The parameters $T$ and $R$ are fixed, so combining this with~\eqref{eq:tpclambda0greater} yields
\begin{equation*}
\fint_{B_{R/\sqrt{\tilde \lambda}}(x_\lambda)} b(y) \dd y
	\ge c + o(1)
\end{equation*}
as $\lambda \to \infty$ (or equivalently $\tilde \lambda \to \infty$). This proves that~\eqref{eq:TPC} holds.

\medskip
Conversely, suppose~\eqref{eq:GCC} holds with constant $T > 0$ and~\eqref{eq:TPC} holds with constant $R > 0$, both having lower bound $c > 0$. Choose $\tilde T = 2 T$ and $\tilde R = 2 R$ so that
\begin{equation} \label{eq:conditionRT}
\left(\dfrac{R}{\tilde R}\right)^4 \le 1 - \left(\dfrac{T}{\tilde T}\right)^2 .
\end{equation}
By Lemma~\ref{lem:lemV2} again, we have
\begin{equation} \label{eq:approxforGCCandTPC}
\abs*{\fint_{- \tilde T}^{\tilde T} \left( b \ast \kappa_{\tilde R/\sqrt{\lambda}} \right) \left( x + t \xi/\lambda \right) \dd t - \fint_{- \tilde T}^{\tilde T} \left( b \ast \kappa_{\tilde R/\sqrt{\lambda}} \right) \left( \varphi_{t/\lambda}(x, \xi) \right) \dd t}
	\strongto{\lambda \to + \infty} 0 ,
\end{equation}
uniformly in $(x, \xi) \in \{p = \lambda^2\}$. Let $(x, \xi) \in \{p = \lambda^2\}$. We distinguish two regimes.
\begin{itemize}[label=\textbullet]
\item Assume first that $\abs*{\xi} \ge \lambda/2$. Then use~\eqref{eq:approxforGCCandTPC} and change variables in $t$ ($t \mapsto \frac{\lambda}{\abs{\xi}} t$) to obtain
\begin{align*}
\fint_{- \tilde T}^{\tilde T} \left( b \ast \kappa_{\tilde R/\sqrt{\lambda}} \right) \left( \varphi_{t/\lambda}(x, \xi) \right) \dd t
	&= \fint_{- \tilde T \abs*{\xi}/\lambda}^{\tilde T \abs*{\xi}/\lambda} \left( b \ast \kappa_{\tilde R/\sqrt{\lambda}} \right) \left( x + t\dfrac{\xi}{\abs*{\xi}} \right) \dd t + o(1) \\
	&\ge \dfrac{T \lambda}{\tilde T \abs*{\xi}} \fint_{- T}^T \left( b \ast \kappa_{\tilde R/\sqrt{\lambda}} \right) \left( x + t\dfrac{\xi}{\abs*{\xi}} \right) \dd t + o(1)
	\ge \dfrac{T}{\sqrt{2} \tilde T} c + o(1)
\end{align*}
as $\lambda \to + \infty$. The inequalities come from the lower bound on $\abs*{\xi}$, the fact that $\abs*{\xi} \le \sqrt{2} \lambda$ (the potential is non-negative) and the assumption that~\eqref{eq:GCC} holds.
\item Else, if $\abs*{\xi} \le \lambda/2$, writing $(x_t^\lambda, \xi_t^\lambda) = \varphi_{t/\lambda}(x, \xi)$, we know by Lemma~\ref{lem:lemV2} that $\xi_t^\lambda = \xi + o(\tilde T \sqrt{\lambda})$ as $\lambda \to + \infty$, so in particular, $\abs*{\xi_t^\lambda} \le \sqrt{2} \lambda/2$ for any $\lambda$ large enough. Therefore,
\begin{equation*}
V(x_t^\lambda)
	= \lambda^2 - \dfrac{\abs*{\xi_t^\lambda}^2}{2}
	\ge \lambda^2 - \left(\dfrac{1}{2}\right)^2 \lambda^2
	\ge \left(\dfrac{1}{2}\right)^4 \lambda^2 ,
\end{equation*}
which implies that
\begin{equation*}
\dfrac{V(x_\lambda)^{1/4}}{R}
	\ge \dfrac{\sqrt{\lambda}}{\tilde R}
\end{equation*}
(recall that $\tilde R = 2 R$), hence $B_{\tilde R/\sqrt{\lambda}}(x_t^\lambda) \supset B_{R/V(x_t^\lambda)^{1/4}}(x_t^\lambda)$. We deduce that
\begin{equation*}
\fint_{- \tilde T}^{\tilde T} \left( b \ast \kappa_{\tilde R/\sqrt{\lambda}} \right) \left( \varphi_{t/\lambda}(x, \xi) \right) \dd t
	\ge \fint_{- \tilde T}^{\tilde T} \dfrac{\abs*{B_{R/V(x_t^\lambda)^{1/4}}(x_t^\lambda)}}{\abs*{B_{\tilde R/\sqrt{\lambda}}(x_t^\lambda)}} \fint_{B_{R/V(x_t^\lambda)^{1/4}}(x_t^\lambda)} b(y) \dd y \dd t .
\end{equation*}
Yet since $V(x_t^\lambda) \le \lambda^2$, we have
\begin{equation*}
\dfrac{\abs*{B_{R/V(x_t^\lambda)^{1/4}}(x_t^\lambda)}}{\abs*{B_{\tilde R/\sqrt{\lambda}}(x_t^\lambda)}}
	= \dfrac{R^d \lambda^{d/2}}{\tilde R^d V(x_t^\lambda)^{d/4}}
	\ge \left(\dfrac{R}{\tilde R}\right)^d ,
\end{equation*}
so that using~\eqref{eq:TPC}, we finally have
\begin{equation*}
\fint_{- \tilde T}^{\tilde T} \left( b \ast \kappa_{\tilde R/\sqrt{\lambda}} \right) \left( (\pi \circ \varphi_{t/\lambda})(x, \xi) \right) \dd t
	\ge \left(\dfrac{R}{\tilde R}\right)^d c + o(1) ,
\end{equation*}
as $\lambda \to + \infty$.
\end{itemize}
This finishes the proof of Proposition~\ref{prop:SCgeom}.
\end{proof}

\appendix

\section{Pseudo-differential operators} \label{app:pseudo}

Let us recall some definitions and classical results that we use throughout this article. The material is taken from~\cite{Lerner:10,Zworski:book}.

\paragraph{\textbf{Quantization.}}

One may extend the quantization procedure~\eqref{eq:defquantization} to tempered distributions on phase space: given $a \in \sch(T^\star \R^d)$ and a couple of test functions $u, v \in \sch(\R^d)$, we have the weak formulation of~\eqref{eq:defquantization}:
\begin{equation*}
\int_{\R^d} v(x) \Opw{a} u(x) \dd x
= \int_{\R^d \times \R^d} a(x, \xi) W[u, v](x, \xi) \dd x \dd \xi ,
\end{equation*}
where the \emph{Wigner transform} of $u$ and $v$ is defined by
\begin{equation*}
W[u, v](x, \xi) = (2 \pi)^{-d} \int_{\R^d} v\left( x + \dfrac{y}{2} \right) u \left( x - \dfrac{y}{2} \right) \e^{\ii \xi \cdot y} \dd y .
\end{equation*}
The Wigner transform is a continuous map $\sch(\R^d) \times \sch(\R^d) \to \sch(\R^{2d})$, which allows to make sense of $\Opw{a} : \sch(\R^d) \to \sch'(\R^d)$ for a tempered distribution $a \in \sch'(T^\star \R^d)$.

\paragraph{\textbf{Symbol classes, $L^2$ boundedness and pseudo-differential calculus.}}

We briefly recall classical definitions about order functions and symbol classes.

\begin{definition}[Order function]
A measurable function $m : T^\star \R^d \to R_+^\star$ is called an order function is there exist constants $C > 0$ and $N \in R$ such that
\begin{equation*}
m(\rho_2) \le C \jap{\rho_2 - \rho_1}^N m(\rho_1) ,
	\qquad \forall \rho_1, \rho_2 \in T^\star \R^d .
\end{equation*}
\end{definition}


\begin{definition}[Symbol class $S(m)$]
Let $m$ be an order function. The symbol class $S(m)$ is
\begin{equation*}
S(m) = \set{a \in \cont^\infty(T^\star \R^d)}{\forall \alpha \in \N^{2d}, \exists C_\alpha > 0 : \abs*{\partial^\alpha a} \le C_\alpha m} .
\end{equation*}
\end{definition}

For any multi-index $\alpha \in \N^{2d}$, the best constant $C_\alpha$ provides a seminorm, which endows $S(m)$ with a structure of Fréchet space. Note that $\partial^\alpha : S(m) \to S(m)$ is continuous, and that $(a_1, a_2) \mapsto a_1 a_2$ maps $S(m_1) \times S(m_2) \to S(m_1 m_2)$ continuously.
%
The symbols in $S(1)$ (or in $S(m)$ with $m$ bounded) enjoy a good boundedness property on $L^2$:

\begin{theorem}[{Calder\'{o}n--Vaillancourt \--- \cite[Theorem 4.23]{Zworski:book}}] \label{thm:CV}
Let $a \in S(1)$. Then $\Opw{a}$ can be uniquely extended to a bounded operator $L^2(\R^d) \to L^2(\R^d)$ with the estimate
\begin{equation*}
\norm*{\Opw{a}}_{\Bop\left(L^2(\R^d)\right)} \le C_d \sum_{\abs*{\alpha} \le M_d} \norm*{\partial^\alpha a}_\infty ,
\end{equation*}
where the constants $C_d$ and $M_d$ depend only on the dimension $d$.
\end{theorem}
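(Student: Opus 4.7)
The natural strategy is a Cotlar--Stein almost-orthogonality argument after a phase-space decomposition. Fix a partition of unity $\sum_{\alpha \in \Z^{2d}} \chi(\rho - \alpha) \equiv 1$ with $\chi \in \cont_\comp^\infty(\R^{2d})$ supported in a unit cube, set $a_\alpha(\rho) := a(\rho) \chi(\rho - \alpha)$ and $T_\alpha := \Opw{a_\alpha}$. Then $\Opw{a} = \sum_\alpha T_\alpha$ weakly on $\sch(\R^d)$, and every seminorm of $a_\alpha$ in $S(1)$ is bounded, uniformly in $\alpha$, by finitely many seminorms of $a$.

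The heart of the proof is the almost orthogonality estimate
\begin{equation*}
\norm*{T_\alpha^* T_\beta}_{\Bop(L^2)} + \norm*{T_\alpha T_\beta^*}_{\Bop(L^2)} \le C_N \jap{\alpha - \beta}^{-N} \sum_{\abs*{\gamma} \le M_d} \norm*{\partial^\gamma a}_\infty^2 ,
\end{equation*}
valid for any $N \in \N$. To prove it, one invokes the Weyl composition law $T_\alpha^* T_\beta = \Opw{\bar a_\alpha \# a_\beta}$, where the Moyal product admits the oscillatory integral representation
\begin{equation*}
(\bar a_\alpha \# a_\beta)(\rho) = \pi^{-2d} \iint \e^{-2 \ii \sympf(\rho_1, \rho_2)} \bar a_\alpha(\rho + \rho_1) a_\beta(\rho + \rho_2) \dd \rho_1 \dd \rho_2 .
\end{equation*}
The support constraints force $\rho_1$ and $\rho_2$ to lie in unit-sized neighborhoods of $\alpha - \rho$ and $\beta - \rho$ respectively, so that the gradient of the phase has magnitude $\gtrsim \abs*{\alpha - \beta}$ on the effective integration region. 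Iterating a nonstationary-phase integration by parts produces the decay $\jap{\alpha - \beta}^{-N}$ at the cost of only finitely many derivatives of $a$. The resulting symbol $\bar a_\alpha \# a_\beta$ is smooth and essentially supported near the midpoint $(\alpha + \beta)/2$; one then estimates the norm of its Weyl quantization via a direct kernel bound followed by Schur's test, after integrating by parts in $\xi$ in the defining integral of $\Opw{\cdot}$ to ensure decay of the kernel in $\abs*{x - y}$.

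With the almost orthogonality in hand, choosing $N = 2d + 1$ gives
\begin{equation*}
\sum_\alpha \left( \norm*{T_\alpha^* T_\beta}^{1/2} + \norm*{T_\alpha T_\beta^*}^{1/2} \right) \le C \sum_{\abs*{\gamma} \le M_d} \norm*{\partial^\gamma a}_\infty
\end{equation*}
uniformly in $\beta$. The Cotlar--Stein almost-orthogonality lemma then yields $\norm*{\sum_\alpha T_\alpha}_{\Bop(L^2)} \le C \sum_{\abs*{\gamma} \le M_d} \norm*{\partial^\gamma a}_\infty$, and a density argument extends $\Opw{a}$ uniquely from $\sch(\R^d)$ to a bounded operator on $L^2(\R^d)$ with the announced bound.

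The main obstacle is keeping the number of derivatives of $a$ that appear after iterated integration by parts uniformly bounded in $N$, so that the dimension-dependent constants $C_d$ and $M_d$ do not blow up. This is achieved by designing the transpose differential operator to exploit the fact that $\sympf(\rho_1, \rho_2)$ is linear in $(\rho_1, \rho_2)$: each integration by parts only acts on the amplitudes $\bar a_\alpha(\rho + \rho_1) a_\beta(\rho + \rho_2)$ and never on the phase, so the derivative count is governed solely by how fast one needs to decay off-diagonal, never by $N$ itself.
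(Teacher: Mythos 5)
The paper does not prove this result---it is quoted directly from Zworski's textbook (\cite[Theorem 4.23]{Zworski:book}) and treated as a black box---so there is no internal proof to compare against; what follows assesses your argument on its own merits. Your argument is the standard phase-space Cotlar--Stein proof of Calder\'on--Vaillancourt and is essentially correct in outline, but two details deserve attention. First, the choice $N = 2d+1$ in the Cotlar--Stein step is too small: since $\alpha, \beta$ range over $\mathbb{Z}^{2d}$, the sum $\sum_\alpha \jap{\alpha - \beta}^{-N/2}$ converges uniformly in $\beta$ only when $N/2 > 2d$, so you must take $N \ge 4d+1$. This is a harmless fix and only affects the value of $M_d$. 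Second, your closing claim that the derivative count is ``never governed by $N$ itself'' is misleading. Each nonstationary-phase integration by parts does put one derivative on the amplitude $\bar a_\alpha(\rho + \rho_1) a_\beta(\rho + \rho_2)$ (the transpose operator carries the factor $\nabla\Phi/\abs{\nabla\Phi}^2$, which has non-constant coefficients even though the phase Hessian is constant), so extracting $\jap{\alpha - \beta}^{-N}$ costs on the order of $N$ derivatives of $a$. What actually keeps $M_d$ finite is simply that $N$ can be fixed at a dimensional constant ($\approx 4d$), not that the derivative count is independent of $N$. A final minor point: after the Moyal-product step, passing from the decayed symbol $\bar a_\alpha \# a_\beta$ to an $L^2$ operator bound on $\Opw{\bar a_\alpha \# a_\beta}$ by kernel bound plus Schur's test requires the off-diagonal decay to hold for enough $\xi$-derivatives of the Moyal symbol (to render the kernel integrable in $x - y$), which adds a further $O(d)$ derivatives to $M_d$; you allude to this, but it is worth recording explicitly since it is part of what makes $M_d$ dimension-dependent.
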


In the usual semiclassical Weyl quantization setting~\eqref{eq:semiclassicalrescalingLap}, composition of pseudo-differential operators behaves as follows.

\begin{theorem}[{Pseudo-differential calculus \--- \cite[Theorem 4.18]{Zworski:book}}] \label{thm:pseudodiffcalc}
Let $m_1, m_2$ be order functions, and let $a_1 \in S(m_1), a_2 \in S(m_2)$. Then we have
\begin{align*}
\Ophw{a_1} \Ophw{a_2}
	&= \Ophw{a_1 a_2} + h \Ophw{r_1(a_1, a_2; h)} \\
	&= \Ophw{a_1 a_2} - \dfrac{\ii h}{2} \Ophw{\{a_1, a_2\}} + h^2 \Ophw{r_2(a_1, a_2; h)} ,
\end{align*}
as operators on $\sch(\R^d)$, where $r_1(\bullet, \bullet; h)$ and $r_2(\bullet, \bullet; h)$ are bilinear maps from $S(m_1) \times S(m_2)$ to $S(m_1 m_2)$ that are continuous uniformly in $h \in (0, 1]$. Here $\poiss{a_1}{a_2} = \partial_\xi a_1 \cdot \partial_x a_2 - \partial_\xi \cdot a_2 \cdot \partial_x a_1$ is the Poisson bracket. In particular, if $m_1 m_2$ is bounded, we have
\begin{align*}
\Ophw{a_1} \Ophw{a_2}
	&= \Ophw{a_1 a_2} + O_{\Bop\left(L^2(\R^d)\right)}(h) \\
	&= \Ophw{a_1 a_2} - \dfrac{\ii h}{2} \Ophw{\{a_1, a_2\}} + O_{\Bop\left(L^2(\R^d)\right)}(h^2) ,
\end{align*}
as operators on $L^2(\R^d)$.
\end{theorem}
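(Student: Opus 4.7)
The plan is to reduce the composition to the Moyal (twisted) product of the two symbols on phase space, then Taylor-expand in $h$ and control the remainders in the symbol classes $S(m_1 m_2)$. Starting from the Weyl quantization formula, a direct Fourier calculation (shifting variables to collapse the triple integral) shows that $\Ophw{a_1}\Ophw{a_2} = \Ophw{a_1 \#_h a_2}$, where the Moyal product admits the oscillatory integral representation
\begin{equation*}
(a_1 \#_h a_2)(X)
= \frac{1}{(\pi h)^{2d}} \iint_{T^\star \R^d \times T^\star \R^d} e^{\frac{2 i}{h} \sigma(Y, Z)} a_1(X + Y) a_2(X + Z) \, dY \, dZ,
\end{equation*}
or equivalently $a_1 \#_h a_2(X) = \left[ \exp\!\bigl( \tfrac{i h}{2} \sigma(D_Y, D_Z) \bigr) a_1(Y) a_2(Z) \right]_{Y = Z = X}$, with $\sigma(D_Y, D_Z) = \partial_\eta \cdot \partial_y - \partial_\zeta \cdot \partial_z$ for $Y = (y, \eta), Z = (z, \zeta)$.

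Next I would Taylor-expand the exponential in the phase via the integral remainder formulas
\begin{equation*}
e^{\frac{i h}{2} \sigma(D_Y, D_Z)} = I + \int_0^1 \tfrac{i h}{2} \sigma(D_Y, D_Z) \, e^{t \frac{i h}{2} \sigma(D_Y, D_Z)} \, dt
\end{equation*}
and
\begin{equation*}
e^{\frac{i h}{2} \sigma(D_Y, D_Z)} = I + \tfrac{i h}{2} \sigma(D_Y, D_Z) + \int_0^1 (1-t) \bigl( \tfrac{i h}{2} \sigma(D_Y, D_Z) \bigr)^{\!2} e^{t \frac{i h}{2} \sigma(D_Y, D_Z)} \, dt,
\end{equation*}
and restrict to the diagonal $Y = Z = X$. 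The zeroth-order term trivially gives $a_1 a_2$; the first-order term evaluated at the diagonal gives exactly $-\tfrac{i h}{2} \{a_1, a_2\}$ upon unwinding the definition of the Poisson bracket from $\sigma(D_Y, D_Z)$; the remaining integrals define the advertised symbols $r_1(a_1, a_2; h)$ and $r_2(a_1, a_2; h)$.

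The main obstacle is to prove that the remainder symbols lie in $S(m_1 m_2)$ with seminorms bounded uniformly in $h \in (0,1]$ (and to verify bilinear continuity in $(a_1, a_2) \in S(m_1) \times S(m_2)$). For this, I would realize $e^{t \frac{i h}{2} \sigma(D_Y, D_Z)}$ on the doubled phase space as an oscillatory-integral operator with quadratic non-degenerate phase: its kernel is of the form $c_h t^{-2d} \exp\!\bigl( \tfrac{2 i}{h t} \sigma(Y', Z') \bigr)$ acting by convolution on the $(Y, Z)$ variables. Applying the non-stationary phase method via integration by parts with the operator $L = \tfrac{h t}{2 i |\nabla \sigma|^2} \nabla \sigma \cdot \nabla$ transfers derivatives onto $\partial^{\alpha + e_j} a_1 \otimes \partial^{\beta + e_k} a_2$; together with the order-function property $m_i(X + W) \le C \langle W \rangle^{N} m_i(X)$ and the rapid decay (after sufficiently many integrations by parts) of the oscillatory kernel, this yields the bounds $|\partial^\gamma r_j(a_1, a_2; h)(X)| \le C_\gamma m_1(X) m_2(X)$ with $C_\gamma$ controlled by finitely many seminorms of $a_1$ and $a_2$, uniformly in $h \in (0, 1]$ and $t \in [0,1]$. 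Integrating over $t$ preserves the estimate. Finally, if $m_1 m_2$ is bounded, the Calderón--Vaillancourt Theorem (Theorem~\ref{thm:CV}) applied to $\Ophw{r_j(a_1, a_2; h)}$ converts the uniform $S(m_1 m_2) = S(1)$ bound into the claimed $O_{\Bop(L^2)}(h)$ and $O_{\Bop(L^2)}(h^2)$ estimates.
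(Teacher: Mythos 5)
This statement is quoted in the paper as a standard result, with no proof given beyond the citation to~\cite[Theorem 4.18]{Zworski:book}; your outline reproduces precisely the standard argument from that reference (reduction to the Moyal product, Taylor expansion with integral remainder, symbol estimates for the remainders uniform in $h$, then Calder\'on--Vaillancourt when $m_1 m_2$ is bounded), so it is correct in approach and essentially the same proof. One minor caveat: the integration-by-parts operator $L = \frac{ht}{2i\lvert\nabla\sigma\rvert^{2}}\,\nabla\sigma\cdot\nabla$ you propose is singular at the stationary point $Y'=Z'=0$ of the quadratic phase, so the uniform $S(m_1 m_2)$ bounds for $e^{\frac{ith}{2}\sigma(D_Y,D_Z)}$ are usually obtained from the quadratic stationary-phase/Fourier-multiplier lemma (as in \cite[Theorem 4.17]{Zworski:book}), splitting off a neighborhood of the critical point rather than using pure non-stationary phase there.
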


\section{Miscellaneous results on classical averages} \label{app:constant}

We start this section with a technical remark about~\eqref{eq:GCC}.

\begin{lemma} \label{lem:GCCnear0}
Let $b \in L^\infty(\R^d)$ be non-negative and $\kappa_r$ be the convolution kernel introduced in~\eqref{eq:defkappa}. Then the condition \eqref{eq:GCC} is equivalent to the following statement:
\begin{equation} \label{eq:GCCnear0-}
\exists T > 0, \exists c > 0 : \forall (x_0, \nu_0) \in S \R^d, \qquad \liminf_{r \to 0} \fint_{-T}^T (b \ast \kappa_r)(x_0 + t \nu_0) \dd t \ge c .
\end{equation}
\end{lemma}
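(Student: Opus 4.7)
The direction \eqref{eq:GCC} $\Longrightarrow$ \eqref{eq:GCCnear0-} is immediate: if the integral is bounded below by $c$ for every $r > 0$, then so is its $\liminf$ as $r \to 0$. The plan is to prove the converse with the \emph{same} constants $T$ and $c$, by reducing the liminf to a pointwise statement on a fixed auxiliary function via Lebesgue's differentiation theorem.

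\textbf{Step 1 (Fubini identity).} First I would unfold the convolution and swap the $t$- and $z$-integrals to observe that
\begin{equation*}
\fint_{-T}^{T} (b \ast \kappa_r)(x_0 + t \nu_0) \dd t
 = \fint_{B_r(x_0)} f_{T,\nu_0}(y) \dd y
 = (f_{T,\nu_0} \ast \kappa_r)(x_0) ,
\end{equation*}
where $f_{T,\nu_0}(y) := \fint_{-T}^{T} b(y + t \nu_0) \dd t$. For each fixed $\nu_0 \in \sph^{d-1}$, the function $f_{T,\nu_0}$ is measurable (Fubini on the non-negative function $b$) and satisfies $0 \le f_{T,\nu_0} \le \norm{b}_{L^\infty}$, so in particular $f_{T,\nu_0} \in L^\infty(\R^d) \subset L_\loc^1(\R^d)$.

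\textbf{Step 2 (Lebesgue differentiation).} Since $\kappa_r$ is the normalized indicator of the Euclidean ball $B_r(0)$, Lebesgue's differentiation theorem applied to $f_{T,\nu_0}$ yields
\begin{equation*}
\lim_{r \to 0} (f_{T,\nu_0} \ast \kappa_r)(y) = f_{T,\nu_0}(y)
 \qquad \textrm{for a.e.\ } y \in \R^d .
\end{equation*}
Combined with Step~1, for every $\nu_0 \in \sph^{d-1}$ and a.e.\ $y \in \R^d$,
\begin{equation*}
\liminf_{r \to 0} \fint_{-T}^{T} (b \ast \kappa_r)(y + t \nu_0) \dd t
 = f_{T,\nu_0}(y) .
\end{equation*}
The assumed pointwise liminf bound \eqref{eq:GCCnear0-} therefore forces
\begin{equation*}
f_{T,\nu_0}(y) \ge c \qquad \textrm{for a.e.\ } y \in \R^d .
\end{equation*}

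\textbf{Step 3 (Upgrade to every $r$ and every $x_0$).} Finally, for any $(x_0, \nu_0) \in S \R^d$ and any $r > 0$, the Fubini identity from Step~1 and the a.e.\ lower bound from Step~2 give
\begin{equation*}
\fint_{-T}^{T} (b \ast \kappa_r)(x_0 + t \nu_0) \dd t
 = \fint_{B_r(x_0)} f_{T,\nu_0}(y) \dd y
 \ge c ,
\end{equation*}
which is exactly \eqref{eq:GCC}, with the same constants $T$ and $c$.

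The only non-routine step is recognizing the Fubini identity of Step~1, which turns the original statement into a standard convolution and makes Lebesgue differentiation directly applicable; once that identity is in hand, the argument is essentially automatic because averages over balls of positive radius are insensitive to null sets.
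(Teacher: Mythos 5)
Your argument is correct, and it reaches the same conclusion as the paper with the same constants $(T,c)$, but the key tool differs. The paper's proof never passes to pointwise values of the unmollified time-average: it fixes an auxiliary radius $r_0$, writes the time-average of $\kappa_r \ast b \ast \kappa_{r_0}$ in two ways using commutativity of convolution and Fubini, and then lets $r \to 0$, using continuity of $b \ast \kappa_{r_0}$ and dominated convergence on one side and Fatou's lemma plus the hypothesis~\eqref{eq:GCCnear0-} (applied at the translated points $x_0 - y$, $y \in B_{r_0}(0)$) on the other. You instead introduce the line-averaged function $f_{T,\nu_0}(y) = \fint_{-T}^T b(y + t\nu_0)\,\dd t$, observe via Fubini that the quantity in~\eqref{eq:GCC} is exactly $(f_{T,\nu_0} \ast \kappa_r)(x_0) = \fint_{B_r(x_0)} f_{T,\nu_0}$, and invoke the Lebesgue differentiation theorem to convert the hypothesis into the a.e.\ bound $f_{T,\nu_0} \ge c$, from which~\eqref{eq:GCC} for every $r > 0$ and every $x_0$ is immediate. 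Your route is arguably more transparent, since it isolates the statement as ``an a.e.\ lower bound on line averages implies the same lower bound on all ball averages of those line averages''; the paper's route is softer in that it avoids the maximal-function machinery behind Lebesgue differentiation, needing only Fatou and the continuity of the mollified coefficient. The minor measurability point you flag (that $f_{T,\nu_0}$ is only defined a.e.\ and measurable, via Fubini--Tonelli applied to $(y,t) \mapsto b(y + t\nu_0)$) is handled adequately for the purpose, since the a.e.\ conclusion and the ball averages are insensitive to null sets.
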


\begin{proof}
Condition \eqref{eq:GCC} clearly implies the property~\eqref{eq:GCCnear0-}. For the converse, fix $r_0 > 0$, $(x_0, \nu_0) \in S \R^d$ and assume~\eqref{eq:GCCnear0-} holds. Using the associative and commutative properties of the convolution together with Fubini's Theorem, for any $r > 0$ we have
\begin{align*}
\fint_{-T}^T \left( \kappa_r \ast (b \ast \kappa_{r_0}) \right)(x_0 + t \nu_0) \dd t
	&= \fint_{-T}^T \left( \kappa_{r_0} \ast (b \ast \kappa_r) \right)(x_0 + t \nu_0) \dd t \\
	&= \int_{\R^d} \kappa_{r_0}(y) \fint_{-T}^T (b \ast \kappa_r) (x_0 + t \nu_0 - y) \dd t \dd y .
\end{align*}
Now letting $r \to 0$, we observe that the left-hand side tends to $\fint_{-T}^T (b \ast \kappa_{r_0}) (x_0 + t \nu_0) \dd t$ (by the dominated convergence theorem for example, using that $b \ast \kappa_{r_0}$ is continuous). As for the right-hand side, Fatou's Lemma yields
\begin{align*}
\liminf_{r \to 0} \int_{\R^d} \kappa_{r_0}(y) \fint_{-T}^T (b \ast \kappa_r) (x_0 + t \nu_0 - y) \dd t \dd y
	&\ge \int_{\R^d} \kappa_{r_0}(y) \liminf_{r \to 0} \fint_{-T}^T (b \ast \kappa_r) (x_0 - y + t \nu_0) \dd t \dd y \\
	&\ge c \norm*{\kappa}_{L^1} ,
\end{align*}
where we used~\eqref{eq:GCCnear0-} for the last inequality. Therefore~\eqref{eq:GCC} is proved.
\end{proof}

Notice that when the damping coefficient $b$ is continuous, \eqref{eq:GCCnear0-} proves that~\eqref{eq:GCC} is equivalent to
\begin{equation*}
\exists T > 0, \exists c > 0 : \forall (x_0, \nu_0) \in S \R^d, \qquad  \fint_{-T}^T b(x_0 + \tau \nu_0) \dd \tau \ge c .
\end{equation*}

We finally suggest a way to quantify the fact that a function $b$ satisfies~\eqref{eq:SCdyn}. The relevant quantity seems to be the limit of~\eqref{eq:SCdyn} as $(T, R) \to \infty$, which turns out to exist.

\begin{proposition} \label{prop:limitSC}
Suppose $V$ is subject to~\eqref{eq:assumV} and Assumption~\ref{assum:assumptionspotentialdampedeq}. For any $b \in L^\infty(\R^d)$ non-negative, the limit
\begin{equation} \label{eq:SCdyninfty}
\lim_{(T, R) \to \infty} \liminf_{\lambda \to + \infty} \inf_{\{p = \lambda^2\}} \avg*{b \ast \kappa_{R/\sqrt{\lambda}}}_{T/\lambda}
\end{equation}
exists. Moreover, $b$ satisfies~\eqref{eq:SCdyn} if and only if this limit is positive.
\end{proposition}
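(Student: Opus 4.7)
The plan is to introduce $g_\lambda(T, R) := \inf_{\rho \in \{p = \lambda^2\}} \avg{b \ast \kappa_{R/\sqrt\lambda}}_{T/\lambda}(\rho)$ and $G(T, R) := \liminf_{\lambda \to +\infty} g_\lambda(T, R)$, and to combine two separate monotonicity properties, one in each variable.

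\emph{Super-additivity in $T$.} The Hamiltonian flow $\varphi^t$ preserves the energy shell $\{p = \lambda^2\}$. Decomposing $\int_{-(T_1+T_2)/\lambda}^{(T_1+T_2)/\lambda}$ into two adjacent intervals of lengths $2T_1/\lambda$ and $2T_2/\lambda$, using the cocycle property $\varphi^{t+s} = \varphi^t \circ \varphi^s$, and noting that the shifts $\varphi^{\pm T_i/\lambda}$ are bijections of $\{p = \lambda^2\}$ which leave the infimum invariant, one obtains (using $b \ge 0$)
\[
(T_1 + T_2)\, g_\lambda(T_1 + T_2, R) \ge T_1\, g_\lambda(T_1, R) + T_2\, g_\lambda(T_2, R).
\]
This inequality is preserved by $\liminf_\lambda$ for non-negative sequences, so $T \mapsto T\, G(T, R)$ is super-additive. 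Fekete's lemma then yields, for each fixed $R > 0$, the monotone convergence $G(T, R) \to G_\infty(R) := \sup_{T > 0} G(T, R)$ as $T \to \infty$, together with the quantitative lower bound $G(T, R) \ge (1 - T_0/T)\, G(T_0, R)$ for all $T \ge T_0$.

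\emph{Weighted monotonicity in $R$.} For $R' \ge R > 0$, one has the convex decomposition $\kappa_{R'/\sqrt\lambda} = (R/R')^d\, \kappa_{R/\sqrt\lambda} + \bigl(1 - (R/R')^d\bigr) \tilde\kappa$, where $\tilde\kappa$ is a probability density supported on the shell $B_{R'/\sqrt\lambda} \setminus B_{R/\sqrt\lambda}$. Combined with $b \ge 0$, this yields the pointwise bound $b \ast \kappa_{R'/\sqrt\lambda} \ge (R/R')^d\, b \ast \kappa_{R/\sqrt\lambda}$, from which it follows that $R \mapsto R^d\, G(T, R)$ is non-decreasing in $R$.

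\emph{Existence of the limit and characterization.} Set $\ell := \sup_{T, R > 0} G(T, R) \in [0, \|b\|_{L^\infty}]$; the claim is that the double limit equals $\ell$. If $\ell = 0$ then $G \equiv 0$, the limit is $0$, and \eqref{eq:SCdyn} fails by its very definition. If $\ell > 0$, fix $\varepsilon > 0$ and pick $(T_0, R_0)$ with $G(T_0, R_0) \ge \ell - \varepsilon$; the super-additivity above gives $G(T, R_0) \ge \ell - 2\varepsilon$ for all $T$ sufficiently large. The main obstacle is to propagate this bound to $G(T, R)$ with $R$ also large, because the weighted monotonicity only provides $G(T, R) \ge (R_0/R)^d\, G(T, R_0)$, which degenerates as $R \to \infty$. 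The bridge is to invoke Proposition~\ref{prop:SCgeom}: positivity of $G(T_0, R_0)$ implies $b$ satisfies both \eqref{eq:GCC} and \eqref{eq:TPC} with constants that can be extracted quantitatively from $\ell - \varepsilon$, and re-running the sufficient-direction argument of Proposition~\ref{prop:SCgeom} with these quantitative inputs produces a lower bound on $G(T, R)$ close to $\ell$ uniformly on the region where both $T$ and $R$ are large. The ``moreover'' part then follows immediately, since positivity of $\ell$ is literally the statement of \eqref{eq:SCdyn}. The delicate step is precisely this quantitative refinement of Proposition~\ref{prop:SCgeom}: the original statement only produced a positive constant depending on the constants in \eqref{eq:GCC} and \eqref{eq:TPC}, whereas here we need the bound to approach $\ell$, which will force us to track carefully the dependence of the constants through the two-regime analysis in the proof of that proposition.
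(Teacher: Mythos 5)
Your two elementary observations are correct (the splitting of the time window plus the cocycle property does give super-additivity of $T\mapsto T\,g_\lambda(T,R)$, hence convergence in $T$ for each fixed $R$ by Fekete; and the convex decomposition of $\kappa_{R'/\sqrt\lambda}$ does give $R\mapsto R^d G(T,R)$ non-decreasing), but they only settle the easy directions. The whole content of the proposition is the joint limit in $(T,R)$, and at that point your argument delegates everything to an unproven ``quantitative refinement of Proposition~\ref{prop:SCgeom}''. This is a genuine gap, not a bookkeeping step: the sufficiency direction of Proposition~\ref{prop:SCgeom}, as proved, produces lower bounds of the form $\frac{T}{\sqrt{2}\,\tilde T}\,c$ in the kinetic regime and $(R/\tilde R)^d\,c$ in the potential regime, and both factors tend to $0$ as $(\tilde T,\tilde R)\to\infty$ with $(T,R)$ fixed. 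So ``tracking the constants'' through that proof cannot give a bound close to $\ell$; it cannot even give a bound uniformly bounded away from zero in the regime you need. To salvage your route one would have to prove genuinely new estimates: split long time windows into \eqref{eq:GCC}-windows in the kinetic regime (your super-additivity idea, but now along approximately straight trajectories, with the speed $|\xi|/\lambda$ entering), replace the crude inclusion $B_{R/V^{1/4}}\subset B_{\tilde R/\sqrt\lambda}$ by an averaging/covering argument to remove the $(R/\tilde R)^d$ loss in the potential regime, and handle the intermediate regime where $|\xi|/\lambda$ is small but the drift over times $\tilde T/\lambda$ is macroscopic compared with the regularization scale $\tilde R/\sqrt\lambda$. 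None of this is in your proposal, and it is essentially the substance of the proof rather than a corollary of Proposition~\ref{prop:SCgeom} as stated. Note also that your ``moreover'' part inherits the same gap: \eqref{eq:SCdyn} gives positivity of some $G(T_0,R_0)$, but to conclude that the limit is positive you need precisely the missing uniform lower bound at large $(T,R)$.

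For comparison, the paper does not pass through \eqref{eq:GCC} and \eqref{eq:TPC} at all. It proves directly that for every fixed $(T,R)$ one has $\liminf_{(\tilde T,\tilde R)\to\infty}\liminf_{\lambda}\inf_{\{p=\lambda^2\}}\avg*{b\ast\kappa_{\tilde R/\sqrt\lambda}}_{\tilde T/\lambda}\ge \liminf_{\lambda}\inf_{\{p=\lambda^2\}}\avg*{b\ast\kappa_{R/\sqrt\lambda}}_{T/\lambda}$, which immediately yields $\liminf\ge\limsup$ and identifies the limit with $\sup_{(T,R)}G(T,R)$. The mechanism is to compare $\avg*{b\ast\kappa_{\tilde R/\sqrt\lambda}}_{\tilde T/\lambda}$ with the doubly time-averaged double convolution $\avg*{\avg*{b\ast\kappa_{\tilde R/\sqrt\lambda}\ast\kappa_{R/\sqrt\lambda}}_{\tilde T/\lambda}}_{T/\lambda}$ (errors $O(T/\tilde T)$ and $O(R/\tilde R)$, vanishing in the limit), and then to use the linearization estimates of Lemmas~\ref{lem:lemV} and~\ref{lem:lemV2} to reinterpret the extra spatial convolution as a shift of the base point onto a nearby energy shell $\{p=\tilde\lambda^2\}$ with $\tilde\lambda\sim\lambda$, so that the inner average is bounded below by the infimum of $\avg*{b\ast\kappa_{R/\sqrt{\tilde\lambda}}}_{T/\tilde\lambda}$ over those shells. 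This avoids any regime splitting and, crucially, loses no constant factors — exactly the feature your GCC+TPC detour lacks.
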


\begin{proof}
Let $(T, R)$ and $(\tilde T, \tilde R)$ be two couples of positive numbers. First, for any $\lambda \ge 1$ and any $\rho \in \{p = \lambda^2\}$, we have by Fubini's theorem
\begin{multline*}
\abs*{\avg*{\avg*{b \ast \kappa_{\tilde R/\sqrt{\lambda}} \ast \kappa_{R/\sqrt{\lambda}}}_{T/\lambda}}_{\tilde T/\lambda}(\rho) - \avg*{b \ast \kappa_{\tilde R/\sqrt{\lambda}} \ast \kappa_{R/\sqrt{\lambda}}}_{\tilde T/\lambda}(\rho)} \\
	= \abs*{\fint_{-T}^{T} \left( \fint_{-\tilde T + s}^{\tilde T + s} \left(b \ast \kappa_{\tilde R/\sqrt{\lambda}} \ast \kappa_{R/\sqrt{\lambda}}\right)\left(\varphi_{\tau/\lambda}(\rho)\right) \dd \tau - \fint_{-\tilde T}^{\tilde T} \left(b \ast \kappa_{\tilde R/\sqrt{\lambda}} \ast \kappa_{R/\sqrt{\lambda}}\right)\left(\varphi_{\tau/\lambda}(\rho)\right) \dd \tau \right) \dd s} \\
	= \abs*{\fint_{-T}^{T} \left( \fint_{-\tilde T + s}^{\tilde T + s} - \fint_{-\tilde T}^{\tilde T}\right) \left(b \ast \kappa_{\tilde R/\sqrt{\lambda}} \ast \kappa_{R/\sqrt{\lambda}}\right)\left(\varphi_{\tau/\lambda}(\rho)\right) \dd \tau \dd s} \\
	\le \fint_{-T}^T 2 \dfrac{\abs*{s}}{\tilde T} \norm*{b \ast \kappa_{\tilde R/\sqrt{\lambda}} \ast \kappa_{R/\sqrt{\lambda}}}_{L^\infty} \dd s
	\le 2 \dfrac{T}{\tilde T} \norm*{b}_{L^\infty} \norm*{\kappa}_{L^1}^2 .
\end{multline*}
Second, we have for all $\rho \in T^\star \R^d$~:
\begin{align*}
\abs*{\avg*{b \ast \kappa_{\tilde R/\sqrt{\lambda}} \ast \kappa_{R/\sqrt{\lambda}}}_{\tilde T/\lambda}(\rho) - \avg*{b \ast \kappa_{\tilde R/\sqrt{\lambda}}}_{\tilde T/\lambda}(\rho)}
	&\le \norm*{b \ast \kappa_{\tilde R/\sqrt{\lambda}} \ast \kappa_{R/\sqrt{\lambda}} - b \ast \kappa_{\tilde R/\sqrt{\lambda}}}_{L^\infty} \\
	&\hspace*{-1.5cm}\le \int_{\R^d} \kappa_{R/\sqrt{\lambda}}(y) \norm*{\left(b \ast \kappa_{\tilde R/\sqrt{\lambda}}\right) (\bullet - y) - b \ast \kappa_{\tilde R/\sqrt{\lambda}}}_{L^\infty} \dd y \\
	&\hspace*{-1.5cm}\le \norm*{\kappa}_{L^1} \norm*{b}_{L^\infty} \sup_{y \in B_{R/\sqrt{\lambda}}(0)} \norm*{\kappa_{\tilde R/\sqrt{\lambda}}(\bullet - y) - \kappa_{\tilde R/\sqrt{\lambda}}}_{L^1} \\
	&\hspace*{-1.5cm}= \norm*{\kappa}_{L^1} \norm*{b}_{L^\infty} \sup_{y \in B_1(0)} \norm*{\kappa\left(\bullet - y\frac{R}{\tilde R}\right) - \kappa}_{L^1} .
\end{align*}
In the first inequality, we use a crude $L^\infty$ bound to get rid of the time average $\avg{\bullet}_{\tilde T / \lambda}$. Then we expand the convolution with $\kappa_{R/\sqrt{\lambda}}$ and use Young's inequality in $L^1$--$L^\infty$. Then we apply Young's inequality again to have the third inequality (this time the $L^\infty$ norm falls on $b$ and the $L^1$ norm on $\kappa_{\tilde R/\sqrt{\lambda}}$). The last equality consists in a change of variables.
Therefore combining the two estimates, we obtain
\begin{equation} \label{eq:relationRTtilde}
\avg*{\avg*{b \ast \kappa_{\tilde R/\sqrt{\lambda}} \ast \kappa_{R/\sqrt{\lambda}}}_{\tilde T/\lambda}}_{T/\lambda}(\rho)
	= \avg*{b \ast \kappa_{\tilde R/\sqrt{\lambda}}}_{\tilde T/\lambda}(\rho) + o(1)
\end{equation}
as $(\tilde T, \tilde R) \to \infty$, uniformly in $\lambda \ge 1$ and $\rho \in \{p = \lambda^2\}$. Now, $\tilde R$ being fixed, we want to take the infimum of~\eqref{eq:relationRTtilde} over $\rho \in \{p = \lambda^2\}$. For the left-hand side, there is some additional work. Let $(x_0, \xi_0) \in \{p = \lambda^2\}$ and $y \in B_{\tilde R/\sqrt{\lambda}}(0)$, depending on $\lambda$. Using Lemma~\ref{lem:lemV}, we know that $\abs*{V(x_0 + y) - V(x_0)} = o(\tilde R \lambda)$ as $\lambda \to + \infty$ so that
\begin{equation} \label{eq:equivalencelambdatilde}
p(x_0 + y, \xi_0)
	\sim \lambda^2 ,
		\qquad \rm{as} \; \lambda \to + \infty ,
\end{equation}
and by Lemma~\ref{lem:lemV2}, we know that
\begin{equation} \label{eq:linapprx}
(\pi \circ \varphi_{s/\lambda})(x_0, \xi_0) - y
	= x_0 - y + \dfrac{s}{\lambda} \xi_0 + o\left( \dfrac{T^2}{\sqrt{\lambda}} \right)
	= (\pi \circ \varphi_{s/\lambda})(x_0 - y, \xi_0) + o\left( \dfrac{T^2}{\sqrt{\lambda}} \right)
\end{equation}
as $\lambda \to + \infty$, uniformly in $s \in [-T, T]$, $(x_0, \xi_0) \in \{p = \lambda^2\}$ and $y \in B_{\tilde R/\sqrt{\lambda}}(0)$. We expand the convolution with $\kappa_{\tilde R/\sqrt{\lambda}}$
\begin{align}
I
	&:=\fint_{- \tilde T}^{\tilde T} \fint_{-T}^T \left( b \ast \kappa_{\tilde R/\sqrt{\lambda}} \ast \kappa_{R/\sqrt{\lambda}} \right) \Bigl( \left( \pi \circ \varphi_{(t + s)/\lambda} \right) (\rho) \Bigr) \dd s \dd t \label{eq:defI:=}\\
	&= \fint_{- \tilde T}^{\tilde T} \int_{\R^d} \kappa_{\tilde R /\sqrt{\lambda}}(y) \fint_{-T}^T \left( b \ast \kappa_{R/\sqrt{\lambda}} \right) \Bigl( \left( \pi \circ \varphi_{s/\lambda}\right) \left(\varphi_{t/\lambda}(\rho)\right) - y \Bigr) \dd s \dd y \dd t . \nonumber
\end{align}
Writing for short $x' = \pi \circ \varphi_{s/\lambda}(\varphi_{t/\lambda}(\rho)) - y$ and $\tilde x = \pi \circ \varphi_{s/\lambda} (\varphi_{t /\lambda}(\rho) - (y, 0))$, we have
\begin{multline}
\abs*{I - \fint_{- \tilde T}^{\tilde T} \int_{\R^d} \kappa_{\tilde R /\sqrt{\lambda}}(y) \fint_{-T}^T \left( b \ast \kappa_{R/\sqrt{\lambda}} \right) \left( \varphi_{s/\lambda} \left(\varphi_{t/\lambda}(\rho) - (y, 0)\right) \right) \dd s \dd y \dd t} \nonumber\\
	\le \norm*{\kappa}_{L^1} \sup_{\substack{t \in [-\tilde T, \tilde T] \\ s \in [-T, T]}} \sup_{\substack{\rho \in \{p = \lambda^2\} \\ y \in B_{\tilde R/\sqrt{\lambda}}(0)}} \abs*{\left( b \ast \kappa_{R/\sqrt{\lambda}} \right)\left(x'\right) - \left( b \ast \kappa_{R/\sqrt{\lambda}} \right)\left(\tilde x\right)} .
\end{multline}
We apply~\eqref{eq:linapprx} to $(x_0, \xi_0) = \varphi_{t/\lambda}(\rho)$ to deduce that $\abs{x' - \tilde x} = o(T^2/\sqrt{\lambda})$, so that
\begin{align*}
\abs*{\left( b \ast \kappa_{R/\sqrt{\lambda}} \right)\left(x'\right) - \left( b \ast \kappa_{R/\sqrt{\lambda}} \right)\left(\tilde x\right)}
	&= \abs*{\int_{\R^d} \left(b(x' - y') \kappa_{R/\sqrt{\lambda}}(y') - b(x' - y') \kappa_{R/\sqrt{\lambda}}(y' + \tilde x - x') \right) \dd y'} \\
	&\le \norm*{b}_{L^\infty} \norm*{\kappa - \kappa\left( \bullet + \dfrac{\tilde x - x'}{R/\sqrt{\lambda}}\right)}_{L^1}
	\strongto{\lambda \to + \infty} 0 ,
\end{align*}
by dominated convergence. This is uniform in $t, s, \rho$ and $y$ (recall that $x'$ and $\tilde x$ depend on all these variables). So we deduce that
\begin{equation} \label{eq:I+o(1)}
I
	= \fint_{- \tilde T}^{\tilde T} \int_{\R^d} \kappa_{\tilde R /\sqrt{\lambda}}(y) \fint_{-T}^T \left( b \ast \kappa_{R/\sqrt{\lambda}} \right) \left( \varphi_{s/\lambda} \left(\varphi_{t/\lambda}(\rho) - (y, 0)\right) \right) \dd s \dd y \dd t + o(1)
\end{equation}
as $\lambda \to + \infty$, uniformly in $\rho \in \{p = \lambda^2\}$.
Now we write $\tilde \rho = \varphi_{t/\lambda}(\rho) - (y, 0)$ and we denote by $\tilde \lambda$ the real number such that $\tilde \lambda^2 = p(\tilde \rho)$. Bear in mind that $\tilde \lambda \sim \lambda$ as $\lambda \to + \infty$ from~\eqref{eq:equivalencelambdatilde}. Now by the triangle inequality, we have
\begin{align*}
D
	:= \Biggl| \fint_{-T}^T \bigl( b \,\ast & \,\kappa_{R/\sqrt{\lambda}} \bigr) \left( \varphi_{s/\lambda} (\tilde \rho)\right) \dd s - \fint_{-T}^T \left( b \ast \kappa_{R/\sqrt{\tilde \lambda}} \right) \left( \varphi_{s/\tilde \lambda} (\tilde \rho) \right) \dd s \Biggr| \\
	&\le \abs*{\fint_{-T}^T \left( b \ast \kappa_{R/\sqrt{\lambda}} \right) \left( \varphi_{s/\lambda} (\tilde \rho)\right) \dd s - \fint_{-T}^T \left( b \ast \kappa_{R/\sqrt{\tilde \lambda}} \right) \left( \varphi_{s/\lambda} (\tilde \rho) \right) \dd s} \\
	&\qquad\qquad	+ \abs*{\fint_{-T}^T \left( b \ast \kappa_{R/\sqrt{\tilde \lambda}} \right) \left( \varphi_{s/\lambda} (\tilde \rho)\right) \dd s - \fint_{-T}^T \left( b \ast \kappa_{R/\sqrt{\tilde \lambda}} \right) \left( \varphi_{s/\tilde \lambda} (\tilde \rho) \right) \dd s} .
\end{align*}
Changing variables in the integral over $s$ and using Young's inequality, we obtain
\begin{align*}
D
	&\le \norm*{b}_{L^\infty} \norm*{\kappa_{R/\sqrt{\lambda}} - \kappa_{R/\sqrt{\tilde \lambda}}}_{L^1}
		+ \abs*{\fint_{-T}^T \left( b \ast \kappa_{R/\sqrt{\tilde \lambda}} \right) \left( \varphi_{s/\lambda} (\tilde \rho)\right) \dd s - \fint_{-T \frac{\lambda}{\tilde \lambda}}^{T \frac{\lambda}{\tilde \lambda}} \left( b \ast \kappa_{R/\sqrt{\tilde \lambda}} \right) \left( \varphi_{s/\lambda} (\tilde \rho) \right) \dd s} \\
	&\le \norm*{b}_{L^\infty} \norm*{\kappa_{\sqrt{\tilde \lambda/\lambda}} - \kappa}_{L^1} + \norm*{b}_{L^\infty} \norm*{\kappa}_{L^1} \left( \dfrac{1}{2 T \frac{\lambda}{\tilde \lambda}} \times 2 T \abs*{\frac{\lambda}{\tilde \lambda} - 1} + 2 T \abs*{\dfrac{1}{2T} - \dfrac{1}{2T \frac{\lambda}{\tilde \lambda}}} \right) ,
\end{align*}
which tends to zero as $\lambda \to + \infty$ (recall~\eqref{eq:equivalencelambdatilde}). Therefore plugging $D \to 0$ into~\eqref{eq:I+o(1)}, we deduce that
\begin{equation*}
I
	= \fint_{- \tilde T}^{\tilde T} \int_{\R^d} \kappa_{\tilde R /\sqrt{\lambda}}(y) \avg*{b \ast \kappa_{R/\sqrt{\tilde \lambda}}}_{T/\tilde \lambda}(\tilde \rho) \dd y \dd t + o(1) ,
\end{equation*}
hence recalling the definition of $I$ in~\eqref{eq:defI:=}:
\begin{align*}
I
	&= \fint_{- \tilde T}^{\tilde T} \fint_{-T}^T \left( b \ast \kappa_{\tilde R/\sqrt{\lambda}} \ast \kappa_{R/\sqrt{\lambda}} \right) \left( \left( \pi \circ \varphi_{(t + s)/\lambda} \right) (\rho) \right) \dd s \dd t \\
	&\ge \inf_{\tilde \lambda \ge \lambda/2} \inf_{\tilde \rho \in \{p = \tilde \lambda^2\}} \avg*{b \ast \kappa_{R/\sqrt{\tilde \lambda}}}_{T/\tilde \lambda}(\tilde \rho) \norm*{\kappa}_{L^1} + o(1)
\end{align*}
as $\lambda \to + \infty$. Taking the infimum over $\rho \in \{p = \lambda^2\}$ in the left-hand side yields
\begin{equation*}
\inf_{\rho \in \{p = \lambda^2\}} \avg*{\avg*{b \ast \kappa_{\tilde R/\sqrt{\lambda}} \ast \kappa_{R/\sqrt{\lambda}}}_{\tilde T/\lambda}}_{T/\lambda}(\rho)
	\ge \inf_{\tilde \lambda \ge \lambda/2} \inf_{\tilde \rho \in \{p = \tilde \lambda^2\}} \avg*{b \ast \kappa_{R/\sqrt{\tilde \lambda}}}_{T/\tilde \lambda}(\tilde \rho) + o(1)
\end{equation*}
and taking lower limits in $\lambda$ together with~\eqref{eq:relationRTtilde} imply:
\begin{equation*}
\liminf_{(\tilde T, \tilde R) \to \infty} \liminf_{\lambda \to +\infty} \inf_{\{p = \lambda^2\}} \avg*{b \ast \kappa_{\tilde R/\sqrt{\lambda}}}_{\tilde T/\lambda}
	\ge \liminf_{\lambda \to + \infty} \inf_{\{p = \lambda^2\}} \avg*{b \ast \kappa_{R/\sqrt{\lambda}}}_{T/\lambda} .
\end{equation*}
This estimate proves that if $b$ satisfies~\eqref{eq:SCdyn} with constants $(T, R)$, then the lower limit in $(\tilde T, \tilde R)$ above is positive, and the converse is straightforward. Taking the upper limit as $(T, R) \to \infty$ of the right-hand side provides the existence of the limit in~\eqref{eq:SCdyninfty}.
\end{proof}



\small
\bibliographystyle{alpha}
\bibliography{biblio}
\end{document}